\newcommand\cyr{%
\renewcommand\rmdefault{wncyr}%
\renewcommand\sfdefault{wncyss}%
\renewcommand\encodingdefault{OT2}%
\normalfont
\selectfont}
\DeclareTextFontCommand{\textcyr}{\cyr}
\newcommand{\mint}{{\times}\kern-0.89em{\int}} 
\numberwithin{equation}{section}
\newtheorem{thm}{Theorem}[section]
\newtheorem{cor}[thm]{Corollary}
\newtheorem{lem}[thm]{Lemma}
\newtheorem{prop}[thm]{Proposition}
\newtheorem{choice}[thm]{Choice}
\theoremstyle{definition}
\newtheorem{defn}[thm]{Definition}
\newtheorem{rem}[thm]{Remark}
\newtheorem{assu}[thm]{Assumption}
\newtheorem{ques}[thm]{Question}
\newtheorem{notation}[thm]{Notation}
\newcommand{\domain}{B^\times \backslash \widehat{B}^\times / \widehat{R}^{(p), \times}}
\begin{document}
\title{Overconvergent quaternionic forms and anticyclotomic $p$-adic $L$-functions}
\author{Chan-Ho Kim}
\address{School of Mathematics, KIAS, Seoul, Korea}
\email{chanho.math@gmail.com}
\date{\today}
\subjclass[2010]{11R23 (Primary); 11F33 (Secondary)}
\keywords{Iwasawa theory, $p$-adic $L$-functions, Gross points, quaternion algebras, automorphic forms}
\maketitle
\begin{abstract}
We reinterpret the explicit construction of Gross points given by Chida-Hsieh as a non-Archimedian analogue of the standard geodesic cycle $(i\infty) - (0)$ on the Poincar\'{e} upper half plane. This analogy allows us to consider certain distributions, which can be regarded as anticyclotomic $p$-adic $L$-functions for modular forms of non-critical slope following the overconvergent strategy \`{a} la Stevens. We also give a geometric interpretation of their Gross points for the case of weight two forms. Our construction generalizes those of Bertolini-Darmon, Bertolini-Darmon-Iovita-Spiess, and Chida-Hsieh.
\end{abstract}
\setcounter{tocdepth}{1}
\tableofcontents

\section{Introduction}

\subsection{Overview}
It is now widely known that ``overconvergent methods" yield simpler and more algorithmically efficient constructions of $p$-adic $L$-functions \cite{glenn-oms}, \cite{pollack-stevens} and conjectural algebraic points of elliptic curves, so called Darmon-Stark-Heegner points \cite{darmon-pollack}, \cite{guitart-masdeu}. In \cite{glenn-oms}, Stevens provides a simple and beautiful construction of Mazur-Tate-Teitelbaum $p$-adic $L$-functions of modular forms under cyclotomic extensions using distribution-valued modular symbols. 
In \cite{greenberg-stevens}, measure-valued modular symbols, which can be regarded as a special case, are used in the proof of the exceptional zero conjecture \cite[$\S$15, 16]{mtt} as an essential ingredient.

In this article, we apply Stevens' ``overconvergent'' idea to the anticyclotomic setting.
Instead of using modular symbols, we use automorphic forms on a definite quaternion algebra (quaternionic forms, for short).
Although modular symbols and quaternionic forms have certain similarities in their shape, their domains are fundamentally different. Modular symbols are essentially defined on the upper half plane, which lies in the complex world, and quaternionic forms are defined on the double coset space arising from the quaternion algebra. Note that, in the case of weight two forms, it can be realized in terms of the Bruhat-Tits tree for $\mathrm{PGL}_2(\mathbb{Q}_p)$ or its variant, which lies in the $p$-adic world.

Using the theory of overconvergent modular symbols, it is proved that the evaluation of the overconvergent modular symbol attached to a non-critical slope eigenform at the cycle $(i \infty) - (0)$ on the upper half plane gives us the $p$-adic distribution corresponding to the Mazur-Tate-Teitelbaum $p$-adic $L$-function of the form.

We develop an analogous theory for overconvergent quaternionic forms. Since the domain is fundamentally different from the case of modular symbols, we naturally meet the following question.
\begin{ques} \label{ques:the_basic_question}
What is an analogue of the geodesic cycle $(i \infty) - (0) \in \mathrm{Div}^0(\mathbb{P}^1(\mathbb{Q}))$ in the quaternionic setting?
\end{ques}

The main contribution of this article is to provide an answer to this question by taking the full advantage of the \emph{explicit} construction of Gross points \`{a} la Chida-Hsieh. We will call such an analogue \emph{the explicit Gross point}.

Also, in the case of weight two forms, we give another interpretation of these points in terms of the Bruhat-Tits tree for $\mathrm{PGL}_2(\mathbb{Q}_p)$.
As an application, we are able to generalize the construction of anticyclotomic $p$-adic $L$-functions to modular forms of non-critical slope. Our construction generalizes those in \cite{bertolini-darmon-mumford-tate-1996}, \cite{bertolini-darmon-iovita-spiess-2002}, and \cite{chida-hsieh-p-adic-L-functions}.

In order to do this, we recall the notion of overconvergent quaternionic forms and (re)prove the control theorem for overconvergent quaternionic forms of non-critical slope. This generalizes \cite[$\S$3]{longo-vigni-control}, which deals with the control theorem for the slope zero subspace. Also, our approach yields a certain integrality of the control theorem for the slope zero subspace.

We expect that the explicit Gross points can be reinterpreted as a functional on the completed cohomology for quaternion algebras sending cuspidal eigenforms to (an half of) their anticyclotomic $p$-adic $L$-functions.

In the sequel paper in preparation, we construct \emph{integral} anticyclotomic $p$-adic $L$-functions for Hida families, which are two variable ones, and prove the vanishing of $\mu$-invariant of each member of the families under mild assumptions, generalizing \cite{castella-kim-longo}. In \cite{castella-kim-longo}, a different approach was taken following \cite{longo-vigni-manuscripta} and \cite{castella-longo} using compatible families of Gross points in the tower of Gross curves, so called \emph{big Gross points}. Note that the approach using big Gross points does not work for the non-ordinary case.

The following diagram describes the flowchart for the classical constructions of cyclotomic and anticyclotomic $p$-adic $L$-functions of modular forms. 
The upper(=cyclotomic) part of the diagram is well-documented in \cite{pollack-oms}.
\[
\xymatrix@=1em{
  & \txt{modular symbols} \ar[d]^-{\textrm{evaluation at } (i\infty) - (a/p^n) \in \mathrm{Div}^0(\mathbb{P}^1(\mathbb{Q})) \textrm{ for } \textbf{all } a}\\
\txt{modular forms} \ar[ur]^-{\textrm{Eichler-Shimura}} \ar[dr]_-{\textrm{Jacquet-Langlands}} & \txt{Mazur-Tate elements} \ar[r]_-{\varprojlim} & \txt{cyclotomic $p$-adic $L$-functions\\\cite{msd}, \cite{mtt}} \\
 &  \txt{quaternionic forms} \ar[d]^-{\textrm{evaluation at} \textbf{ all } \textrm{Gross points at level 0 or 1}}  \\
 & \txt{Bertolini-Darmon theta elements} \ar[r]_-{\varprojlim} & \txt{anticyclotomic $p$-adic $L$-functions\\ \cite{bertolini-darmon-imc-2005}, \cite{chida-hsieh-p-adic-L-functions}}
}
\]

The overconvergent method shows that it suffices to evaluate overconvergent modular symbols or overconvergent quaternionic forms ``at one point". This is because
the overconvergent method pushes the complexity of the evaluation of classical quaternionic forms at all Gross points ($\S$\ref{subsec:classical_gross_pts}) into the complexity of the coefficient modules (the distribution modules) of overconvergent quaternionic forms ($\S$\ref{subsec:distributions} and $\S$\ref{subsec:the_distribution}).
The bold part of the following diagram is the main content of this article. 
\[
\xymatrix@=1em{
& \txt{modular symbols} \ar[r]^-{\textrm{control theorem}} & \txt{overconvergent modular symbols}   \ar[d]_-{\textrm{evaluation at } (i\infty) - (0) \in \mathrm{Div}^0(\mathbb{P}^1(\mathbb{Q}))}\\
\txt{modular forms} \ar[ur]^-{\textrm{Eichler-Shimura}} \ar[dr]_-{\textrm{Jacquet-Langlands}} & & \txt{Mazur-Tate-Teitelbaum\\cyclotomic $p$-adic $L$-functions\\ \cite{glenn-oms}, \cite{pollack-stevens}} \\
 &  \txt{quaternionic forms} \ar[r]^-{\textbf{control theorem}} & \textbf{overconvergent quaternionic forms}  \ar[d]_-{\textbf{evaluation at $\varsigma^{(1)}$, the explict Gross point}} \\
 &  & \txt{\textbf{Bertolini-Darmon}\\ \textbf{anticyclotomic $p$-adic $L$-functions}}
}
\]

\subsection{Setting the basic stage}
Let $p$ be a prime $\geq$ 3 and $k \in \mathbb{Z}_{\geq 2}$.
Fix an algebraic closure $\overline{\mathbb{Q}}$ of $\mathbb{Q}$ and embeddings $\iota_\infty: \overline{\mathbb{Q}}\hookrightarrow \mathbb{C}$ and $\iota_p: \overline{\mathbb{Q}} \hookrightarrow \mathbb{C}_p$.
Let $\Gamma_0(N)$ be the congruence subgroup of level $N$ with $(N,p) = 1$.
Let $f_k = \sum a_n(f_k)q^n \in S_{k} (\Gamma_0(Np))$ be a $p$-stabilized newform of slope $h = \mathrm{ord}_p (\alpha_p(f_k)) < k-1$ with the convention $\mathrm{ord}_p(p) = 1$, i.e. the slope of $f$ is non-critical.

Fix an imaginary quadratic field $K$ with $(\mathrm{disc}(K), pN) = 1$.
The choice of $K$ determines the decomposition of $N$ as follows:
\begin{equation} \label{eqn:decomposition}
N= N^+ \cdot N^-
\end{equation}
where a prime divisor of $N^+$ splits in $K$ and a prime divisor of $N^-$ is inert in $K$.

\begin{assu} \label{assu:parity}
In Equation (\ref{eqn:decomposition}), $N^-$ is square-free and the product of an \emph{odd} number of primes.
\end{assu}

Let $K_\infty$ be the anticyclotomic $\mathbb{Z}_p$-extension of $K$ and $\Gamma_\infty = \mathrm{Gal}(K_\infty/K) \simeq \mathbb{Z}_p$ (non-canonically). Write $K_n$ for the unique subfield of $K_\infty$ such that $\Gamma_n = \mathrm{Gal}(K_n/K) \simeq \mathbb{Z}/p^n\mathbb{Z}$.

Let $B$ be the definite quaternion algebra over $\mathbb{Q}$ of discriminant $N^-$ and $R = R_{N^+}$ be an (oriented) Eichler order of level $N^+$.
For each prime $\ell \nmid N^-$, we fix an embedding $R_\ell := R \otimes_{\mathbb{Z}} \mathbb{Z}_\ell \hookrightarrow \mathrm{M}_2(\mathbb{Z}_\ell)$ and we identify them under this isomorphism.
Let $\widehat{A} := A \otimes_{\mathbb{Z}} \widehat{\mathbb{Z}}$ for any abelian group $A$.

For each $r \geq 0$, let
$R_{N^+p^r}$ be an Eichler order of level $N^+ p^r$ such that 
$$R^\times_{N^+p^r,p}  := (R_{N^+p^r} \otimes_{\mathbb{Z}} \mathbb{Z}_p )^\times = \lbrace \left( \begin{smallmatrix}
a & b \\ c & d 
\end{smallmatrix} \right) \in \mathrm{GL}_2(\mathbb{Z}_p) : c \in p^r\mathbb{Z}_p \rbrace$$
and its prime-to-$p$ component coincides with that of $\widehat{R}^\times$.
We also write $\Gamma_0(p^r\mathbb{Z}_p) = R^\times_{N^+p^r,p}$.
Note that $\widehat{R}^\times_{N^+p^r}$ corresponds to $\Gamma_0(N^+ p^r)$-level structures in the classical sense.

Let $E$ be a finite extension of $\mathbb{Q}_p$ large enough to contain all the Hecke eigenvalues of $f_k$ and write $\mathcal{O} = \mathcal{O}_E$.

Let $\mathscr{D}_k/\mathbf{D}_k$ be the space of $E$-valued locally/rigid analytic distributions on $\mathbb{Z}_p$ with weight $k$ action of a certain semigroup $\Sigma_0(p)$, respectively. 
Let $\mathscr{D}_k(\mathcal{O})/\mathbf{D}_k(\mathcal{O})$ be the subspaces of $\mathcal{O}$-valued locally/rigid analytic distributions of $\mathscr{D}_k/\mathbf{D}_k$, respectively. See $\S$\ref{subsec:distributions} for detail.

Let $S^{N^-}_{k} (N^+p, E)$ be the space of $E$-valued quaternionic forms of weight $k$, level $N^+ p$, and discriminant $N^-$, and denote its overconvergent variants by $S^{N^-} (N^+p, \mathscr{D}_k)$, $S^{N^-} (N^+p, \mathbf{D}_k)$, which are defined in $\S$\ref{sec:automorphic_forms}.

For any Hecke module $S$, let $S^{(< h)}$ be the subspace of $S$ consisting of the members whose slopes are less than $h$, and $S^{(0)}$ be the slope zero subspace.

\subsection{A precise formulation of Question \ref{ques:the_basic_question} and its answer} \label{subsec:precise_formulation}
For a cuspidal eigenform $f_k$ of non-critical slope, let
 $$\phi^{ms}_{f_k} : \mathrm{Div}^0( \mathbb{P}^1(\mathbb{Q}) ) \to \mathrm{Sym}^{k-2}( \overline{\mathbb{Z}}^2_p )$$
  be the integrally normalized corresponding modular symbol defined using the Eichler-Shimura map.
Looking at the diagram
\[
\xymatrix{
& & \mathrm{SL}_2(\mathbb{R}) \ar@{->>}[d] \\
\mathbb{P}^1(\mathbb{Q}) & &  \mathfrak{h} \ar@{->>}[d] \ar@{-->}[ll]_-{``\textrm{boundary}"} \\
& & \Gamma(1) \backslash \mathfrak{h} & \textrm{the set of homothety classes of lattices in $\mathbb{C}$} ,
}
\]
we may consider $\phi^{ms}_{f_k}$ as ``a function on the upper half plane $\mathfrak{h}$", at least intuitively. Indeed, the modular symbols are computed terms of the period integrals on $\mathfrak{h}$ (as in \cite[$\S$2]{pollack-oms}).
\begin{rem}[on cusps on the adelic formulation]
See \cite[$\S$3]{scholl-modular-units} for the adelic interpretation of cusps, $\mathbb{P}^1(\mathbb{Q})$, on $\mathfrak{h}$.
\end{rem}
By \cite{glenn-oms}, we can uniquely lift $\phi_{f_k}$ to the overconvergent modular symbol
$$\Phi^{ms}_{f_k} : \mathrm{Div}^0( \mathbb{P}^1(\mathbb{Q}) ) \to \mathscr{D}_k .$$
See  also \cite{greenberg-israel} and \cite{pollack-stevens}.
Then the overconvergent modular symbol $\Phi^{ms}_{f_k}$ directly yields the Mazur-Tate-Teitelbaum $p$-adic $L$-function as a distribution by
$$L_p(\mathbb{Q}(\mu_{p^\infty}), f_k) = \Phi^{ms}_{f_k} \left(  (i\infty) - (0)  \right) .$$ 

In the anticyclotomic case, certain special points on the adelic double coset space arising from quaternion algebras, called (classical) Gross points, play the same role as $(i\infty) - (a/p^n) \in \mathrm{Div}^0( \mathbb{P}^1(\mathbb{Q}) )$ for the construction of anticyclotomic $p$-adic $L$-function of weight two ordinary forms. We will review this in $\S$\ref{subsec:classical_gross_pts}.

There are several approaches toward the generalization to the higher weight forms
including \cite{bertolini-darmon-iovita-spiess-2002},  \cite{bertolini-darmon-hida-2007}, and \cite{chida-hsieh-p-adic-L-functions} but with limitations. One of the obstructions is the lack of the ``right infinite level space" where the Gross points live. More precisely, the domain of higher weight quaternionic forms lies in a ``deeper" level than the domain where the Gross points are canonically defined.

Mimicking the above picture in the quaternionic setting, we have a slightly more complicated picture
\[
\xymatrix{
& & B^\times \backslash \widehat{B}^\times  / \widehat{R}^{(p), \times} \ar@{->>}[d] & \textrm{the domain of quaternionic forms of arbitrary weight}\\
& & R[1/p]^\times \backslash B^{\times}_p / \mathbb{Q}^\times_p  & \\
& & B^\times_p / \mathbb{Q}^\times_p  \ar@{->>}[u] \ar@{->>}[d] & = \mathrm{PGL}_2(\mathbb{Q}_p)\\
\mathbb{P}^1(\mathbb{Q}_p) & &  B^\times_p / \left( \mathbb{Q}^\times_p \cdot R^\times_{N^+p^{\infty}, p} \right) \ar@{->>}[d] \ar@{-->}[ll]_-{\textrm{boundary}} & \txt{the set of certain consecutive sequences\\ of homothety classes of lattices\\(A ``natural" domain for Gross points)}\\
& &  B^\times_p / \left( \mathbb{Q}^\times_p \cdot R^\times_{N^+,p} \right) \ar@{->>}[d] & \txt{the set of homothety classes of lattices in $\mathbb{Q}^2_p$}  \\
& & R[1/p]^\times \backslash B^\times_p /  R^\times_{N^+,p} & \textrm{the domain of weight two quaternionic forms} .
}
\]
The na\"{i}ve analogy suggests us considering $B^\times \backslash \widehat{B}^\times  / \widehat{R}^{\times}_{N^{+}p^{\infty}} \simeq B^\times_p / \left( \mathbb{Q}^\times_p \cdot R^\times_{N^+p^{\infty}, p} \right)$ or even $\mathrm{PGL}_2( \mathbb{Q}_p )$ as the domain of the quaternionic forms, but \emph{it is not true}.
Thus, the na\"{i}ve analogy does not gives us a chance to find an analogous element of $( i \infty ) - (0) \in \mathrm{Div}^0(\mathbb{P}^1(\mathbb{Q}))$ in $B^\times \backslash \widehat{B}^\times  / \widehat{R}^{(p), \times}$ if the weight of the form $> 2$.
\begin{rem}[on the relation with the $p$-adic upper half plane]
The relation between the $p$-adic upper half plane $\mathfrak{h}_p$ and $B^\times_p / \left( \mathbb{Q}^\times_p \cdot R^\times_{N^+ p^\infty, p} \right)$ is well-documented in \cite[$\S$1]{dasgupta-teitelbaum}. We also suggest \cite{schneider} and \cite{teitelbaum} for the theory of boundary distributions on $\mathbb{P}^1(\mathbb{Q}_p)$. 
\end{rem}
As in the picture, the domain of quaternionic forms lies ``deeper" than $B^\times_p / \left( \mathbb{Q}^\times_p \cdot R^\times_{N^+p^\infty, p} \right)$ if their weight is $> 2$, and even the domain
$B^\times \backslash \widehat{B}^\times  / \widehat{R}^{(p), \times}$
has no direct geometric description as far as we know.
In the case of weight two forms, it suffices to find (classical) Gross points on $B^\times_p / \left( \mathbb{Q}^\times_p \cdot R^\times_{N^+,p} \right)$; thus, the na\"{i}ve analogy works well and the classical Gross points can be lifted to geometric Gross points on $B^\times_p / \left( \mathbb{Q}^\times_p \cdot R^\times_{N^+p^\infty, p} \right)$. 
Although it seems difficult to find a geometric motivation, Chida and Hsieh directly and explicitly constructed Gross points on $\widehat{B}^\times$ in \cite{chida-hsieh-p-adic-L-functions}. Their construction allows us to find the analogue of $(i\infty) - (0)$ for the quaternionic setting.
We review their explicit construction of Gross points (``explicit Gross points") in $\S$\ref{sec:explicit_Gross_points}, give them an geometric interpretation for the case of weight two forms (``geometric Gross points") in $\S$\ref{sec:geometric_gross_pts}, and compare these points in $\S$\ref{sec:comparison_gross_pts}.

\subsection{Control theorems}
In $\S$\ref{sec:control_proof}, we reprove the following control theorem for non-critical slope forms.
\begin{thm}[Theorem {\ref{thm:control}}] \label{thm:main_thm_1_classicality}
There exist Hecke-equivariant isomorphisms
\begin{align*}
S^{N^-} (N^+p, \mathscr{D}_k )^{(< k-1)} & \overset{\simeq}{\to} S^{N^-} (N^+p, \mathbf{D}_k )^{(< k-1)} \\
& \overset{\simeq}{\to} S^{N^-}_{k} (N^+p, E)^{(< k-1)} .
\end{align*}
\end{thm}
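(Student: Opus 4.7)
The plan is to adapt Stevens' strategy for overconvergent modular symbols to the quaternionic setting, taking advantage of the fact that $B$ is definite. Because the double coset space $B^\times \backslash \widehat{B}^\times / \widehat{R}^{(p),\times}$ is finite, $S^{N^-}(N^+p, M)$ is essentially the space of $M$-valued functions on this finite set satisfying a prescribed equivariance under a finite subgroup of $B^\times$, with the $\Sigma_0(p)$-action entering only through the coefficient module $M$. The control theorem therefore reduces almost entirely to an assertion about the coefficient modules $\mathscr{D}_k$, $\mathbf{D}_k$, and the classical weight-$k$ module $V_k = \mathrm{Sym}^{k-2}(E^2)^\vee$.

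The first step is to construct the $\Sigma_0(p)$-equivariant specialization maps
\[
\mathscr{D}_k \twoheadrightarrow \mathbf{D}_k \xrightarrow{\rho_k} V_k,
\]
where $\mathscr{D}_k \to \mathbf{D}_k$ is restriction of a locally analytic distribution to the subspace of rigid analytic functions, and $\rho_k$ is restriction to polynomial functions of degree $\leq k-2$. Passing to coefficients yields Hecke-equivariant maps on $S^{N^-}(N^+p,-)$. Let $\mathbf{D}_k^\circ := \ker(\rho_k)$, the distributions whose first $k-1$ moments vanish, and let $\mathscr{D}_k^\circ$ denote the kernel of $\mathscr{D}_k \twoheadrightarrow \mathbf{D}_k$.

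The crux is the slope estimate: $U_p$ acts on both $\mathbf{D}_k^\circ$ and $\mathscr{D}_k^\circ$ with all slopes $\geq k-1$. Writing $U_p = \sum_{a=0}^{p-1} \begin{pmatrix} 1 & a \\ 0 & p \end{pmatrix}$, the weight-$k$ dual action of $\begin{pmatrix} 1 & a \\ 0 & p \end{pmatrix}$ sends a rigid analytic function $f(z)$ to $f(a+pz)$; expanding in an orthonormal binomial basis $\{\binom{z}{j}\}_{j\geq 0}$ and passing to the dual basis for $\mathbf{D}_k$ shows that $U_p$ on $\mathbf{D}_k^\circ$ has matrix entries divisible by $p^{k-1}$. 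This is the same calculation as in \cite{pollack-stevens} and \cite{glenn-oms}, and it transfers verbatim because our $\Sigma_0(p)$-module structure on $\mathbf{D}_k$ is identical to the one used for modular symbols. A parallel Mahler-type computation handles $\mathscr{D}_k^\circ$ disk-by-disk.

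Given the slope estimate, the Coleman / Ash--Stevens slope-decomposition formalism applies: $U_p$ is completely continuous on $\mathscr{D}_k$ and $\mathbf{D}_k$, hence on the finite direct sum computing $S^{N^-}(N^+p,-)$, so the slope $<k-1$ projector exists and is Hecke-equivariant. The slope bound forces the slope $<k-1$ subspaces of $S^{N^-}(N^+p, \mathbf{D}_k^\circ)$ and $S^{N^-}(N^+p, \mathscr{D}_k^\circ)$ to vanish, yielding injectivity of both specialization maps on the slope $<k-1$ subspaces; surjectivity follows from Hecke-equivariance together with compatibility with the slope projectors. The main technical obstacle is setting up the slope decomposition carefully enough to handle arbitrary non-integer slopes $h<k-1$ in the non-integral Banach-module setting, and then refining the bookkeeping to $\mathcal{O}$-integral coefficients in the slope-zero case in order to extract the integrality refinement of \cite[\S 3]{longo-vigni-control} promised in the introduction.
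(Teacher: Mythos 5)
Your strategy is sound and would prove the theorem, but it is a genuinely different route from the one the paper takes. You argue in the Stevens--Ash--Stevens style: exploit the finiteness of $B^\times\backslash\widehat{B}^\times/\widehat{R}^{(p),\times}$ to reduce everything to the coefficient modules, establish the short exact sequence $0\to\mathbf{D}_k^\circ\to\mathbf{D}_k\to L_k\to 0$, show $U_p$ has slope $\geq k-1$ on $\mathbf{D}_k^\circ$ (your moment computation $\mu(z^j)\mapsto\sum_i\binom{j}{i}p^ia^{j-i}\mu(z^i)$ is the right one), and then invoke compact-operator slope decompositions to split off the small-slope part. The paper instead follows M.~Greenberg's ``geometry free'' successive-approximation argument: it filters $\mathbf{D}_k(\mathcal{O})$ by submodules $\mathrm{Fil}^m\mathbf{D}_k(\mathcal{O})$, forms finite approximation quotients $A^m\mathbf{D}_k(\mathcal{O})$, and lifts a classical eigenform one step at a time via $\phi^{m+1}=\mathrm{sp}^{m+1}_{k,*}\bigl(\tfrac{1}{\alpha}U_p\Phi\bigr)$, with the key estimate that $\left(\begin{smallmatrix}p&a\\0&1\end{smallmatrix}\right)$ maps $\mathrm{Fil}^m$ into $\alpha\,\mathrm{Fil}^{m+1}$ once $\mathrm{ord}_p(\alpha)<k-1$. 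The paper explicitly acknowledges your style of proof as the alternative (Buzzard's \emph{Proposition 4}, better suited to eigenvarieties). What each buys: your route gives the result for the whole slope-$<k-1$ subspace at once and fits naturally into the eigenvariety formalism, but requires setting up complete continuity and slope decompositions on Banach modules; the paper's route is elementary, works eigenvalue-by-eigenvalue, and---crucially for Corollary~\ref{cor:control_ordinary}---keeps explicit track of $\mathcal{O}$-integrality, since $\tfrac{1}{\alpha}U_p$ visibly preserves $\mathbf{D}_k(\mathcal{O})$ when $\alpha$ is a unit. Your closing remark that the integral refinement needs extra ``bookkeeping'' is an understatement: the slope-decomposition formalism over $E$ does not by itself produce the integral statement, whereas the paper's lifting does so for free.

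Two smaller points to fix. First, your treatment of the first isomorphism is miscast: the restriction map $\mathscr{D}_k\to\mathbf{D}_k$ is an \emph{inclusion} (as the paper notes following Pollack--Stevens), so there is no kernel $\mathscr{D}_k^\circ$ to estimate; the actual content of that step is that a finite-slope $\mathbf{D}_k$-valued eigenform lifts to $\mathscr{D}_k$ because $U_p$ improves the radius of analyticity, which is exactly the cited Lemma 5.3 of Pollack--Stevens' critical-slope paper. Second, with the paper's convention $(f|_kr)(z)=(cz+d)^{k-2}f\bigl(\tfrac{az+b}{cz+d}\bigr)$ the matrices appearing in $U_p$ are $\left(\begin{smallmatrix}p&a\\0&1\end{smallmatrix}\right)$, not $\left(\begin{smallmatrix}1&a\\0&p\end{smallmatrix}\right)$; your formula $f(z)\mapsto f(a+pz)$ is nonetheless the correct one, so this is only a bookkeeping slip. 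Finally, your surjectivity step quietly uses that $S^{N^-}(N^+p,-)$ is exact on $E$-coefficients (vanishing of $H^1$ of the finite stabilizer groups); this is true here but should be said.
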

\begin{rem}
Theorem \ref{thm:main_thm_1_classicality} is a quaternionic analogue of \cite[Theorem 1.1 and Theorem 5.12]{pollack-stevens}) and \emph{generalizes} \cite[$\S$3]{longo-vigni-control} to the non-critical slope case.
\end{rem}

For the slope zero subspace, we obtain an integrally refined control theorem, which \emph{refines} \cite[$\S$3]{longo-vigni-control}.
\begin{cor}[Corollary \ref{cor:control_ordinary}]
There exist Hecke-equivariant isomorphisms
\begin{align*}
S^{N^-} (N^+p, \mathscr{D}_k(\mathcal{O}) )^{(0)} & \overset{\simeq}{\to} S^{N^-} (N^+p, \mathbf{D}_k(\mathcal{O}) )^{(0)} \\
& \overset{\simeq}{\to} S^{N^-}_{k} (N^+p, \mathcal{O})^{(0)} .
\end{align*}
\end{cor}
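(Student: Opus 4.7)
The plan is to deduce this integral refinement directly from Theorem \ref{thm:control} by means of the ordinary projector $e = \lim_{n\to\infty} U_p^{n!}$, which is well-defined as an $\mathcal{O}$-linear idempotent on any $p$-adically separated and complete $\mathcal{O}$-module carrying a completely continuous $U_p$-action. Each of the three spaces in the statement is naturally a $p$-adically complete $\mathcal{O}$-lattice inside the corresponding $E$-space appearing in Theorem \ref{thm:control}, and the slope-zero subspace $S^{(0)}$ is, by definition, the image of $e$ on the ambient $\mathcal{O}$-module.

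First I would note that the natural specialization maps
\[
\mathscr{D}_k(\mathcal{O}) \lra \mathbf{D}_k(\mathcal{O}) \overset{\rho_k}{\lra} V_{k-2}^{*}(\mathcal{O})
\]
into the $\mathcal{O}$-dual of the polynomials of degree $\leq k-2$ (the coefficient module underlying $S_k^{N^-}(N^+p,\mathcal{O})$) are $\Sigma_0(p)$-equivariant. Since the double coset space $B^\times \backslash \widehat{B}^\times / \widehat{R}_{N^+p}^\times$ is finite and the stabilizers involved are $p$-units under the standing assumptions, the functor $S^{N^-}(N^+p,-)$ is exact on this category, and one obtains a short exact sequence of Hecke modules
\[
0 \lra S^{N^-}(N^+p, K) \lra S^{N^-}(N^+p, \mathbf{D}_k(\mathcal{O})) \lra S_k^{N^-}(N^+p, \mathcal{O}) \lra 0,
\]
where $K = \ker(\rho_k)$, together with its analogue for $\mathscr{D}_k(\mathcal{O})$.

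The key integral input is then the statement that $U_p$ maps $K$ into $p^{k-1} K$; this is exactly the moment-level computation that underlies the slope bound $\ord_p(U_p|_{K}) \geq k-1$ used in the proof of Theorem \ref{thm:control}, read integrally rather than rationally. Granting this, $U_p^{n!}$ sends $S^{N^-}(N^+p, K)$ into $p^{n!(k-1)} S^{N^-}(N^+p, K)$ for every $n$, so the limit $e = \lim_n U_p^{n!}$ converges to zero on this space by $p$-adic separatedness. Applying $e$ to the short exact sequences above (and to their counterpart for $\mathscr{D}_k(\mathcal{O})$) then yields the two claimed isomorphisms.

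The main obstacle will be establishing the precise integral statement $U_p(K) \subset p^{k-1} K$: this requires verifying from the explicit formula for the weight-$k$ action of $\bigl(\begin{smallmatrix}p&0\\0&1\end{smallmatrix}\bigr)$ on a distribution whose first $k-1$ moments vanish that the output lies in $p^{k-1}$ times an $\mathcal{O}$-valued distribution, rather than merely having the correct $p$-adic valuation after inverting $p$. Although essentially a bookkeeping exercise on moments, this is exactly what distinguishes the integral corollary from the rational Theorem \ref{thm:control}, and it is what forces the restriction to slope zero: on the full slope-$<k-1$ subspace the operator $U_p$ need not act invertibly over $\mathcal{O}$, so the ordinary projector trick is unavailable and the isomorphism is generally only rational.
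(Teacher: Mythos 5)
Your strategy is sound in outline, but it is genuinely different from the paper's. The paper proves Theorem \ref{thm:control} by Greenberg's successive-approximation method: a $U_p$-eigenform $\phi^m$ valued in the finite approximation module $A^m\mathbf{D}_k(\mathcal{O})$ is lifted to $\phi^{m+1} := \mathrm{sp}^{m+1}_{k,*}\bigl((\tfrac{1}{\alpha}U_p)\Phi\bigr)$ for an \emph{arbitrary} set-theoretic lift $\Phi$, and Lemmas \ref{lem:independent}--\ref{lem:eigenvalues} show the result is well defined, equivariant, and again an eigenform. Corollary \ref{cor:control_ordinary} is then immediate from Remark \ref{rem:ordinary_integrality}.(2): when $h=0$ the eigenvalue $\alpha$ is a unit and $L^{\alpha}_k(\mathcal{O})=L_k(\mathcal{O})$, so every step of the lifting stays inside $\mathbf{D}_k(\mathcal{O})$. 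Your route --- the ordinary projector $e=\lim U_p^{n!}$ applied to a short exact sequence of integral coefficient modules, with $e$ annihilating $S^{N^-}(N^+p,K)$ because $U_p$ is topologically nilpotent on $K=\ker(\mathrm{sp}_k)$ --- is the standard Hida-theoretic alternative, and your key integral input $U_p(K)\subseteq p^{k-1}K$ is correct (it is in substance Lemma \ref{lem:lemma_for_convergence}.(2) read with $\alpha\in\mathcal{O}^\times$). What your approach buys is a cleaner, more conceptual statement at slope zero; what it loses, as you note yourself, is any access to nonzero slope, which is why the paper runs the lifting argument in the first place.

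The step you should not treat as free is the exactness of $S^{N^-}(N^+p,-)$, i.e.\ the surjectivity of $S^{N^-}(N^+p,\mathbf{D}_k(\mathcal{O}))\to S^{N^-}_k(N^+p,\mathcal{O})$ \emph{before} projecting. Writing $S^{N^-}(N^+p,M)\simeq\bigoplus_i M^{\Gamma_i}$ over coset representatives, right-exactness needs $\h^1(\Gamma_i,K)=0$, hence that the finite stabilizers $\Gamma_i$ have order prime to $p$; this is not among the paper's standing hypotheses (only $p\geq 3$ is assumed, and for $p=3$ order-$3$ stabilizers do occur for definite quaternion algebras). The paper's method sidesteps this entirely: the $U_p$-averaging in the one-step lift manufactures an equivariant form out of a non-equivariant lift (Lemma \ref{lem:U_p-equivariance_of_lifting}), so no cohomological vanishing is invoked. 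To keep your projector formulation you should either impose the prime-to-$p$ condition on stabilizers or replace the appeal to exactness by the same $U_p$-averaging device. A second, more cosmetic point: your target lattice, the $\mathcal{O}$-dual of polynomials of degree $\leq k-2$ (the moment lattice), agrees with $L_k(\mathcal{O})=\mathrm{Sym}^{k-2}(\mathcal{O}^2)$ only up to the binomial coefficients $\binom{k-2}{j}$, hence only for $k-2<p$; the paper makes the same identification $A^0\mathbf{D}_k(\mathcal{O})\simeq L_k(\mathcal{O})$ and acknowledges the normalization issue in the remark following Corollary \ref{cor:control_ordinary}, so you should at least state which integral structure on the classical side you intend. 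Your remaining points (convergence of $e$ on the profinite module $S^{N^-}(N^+p,\mathbf{D}_k(\mathcal{O}))$, and the analogous nilpotence on $\ker(\mathscr{D}_k(\mathcal{O})\to\mathbf{D}_k(\mathcal{O}))$ for the first isomorphism) are standard and fine.
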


\subsection{Overconvergent construction of $p$-adic $L$-functions}
Using the explicit Gross points, we are able to construct anticyclotomic $p$-adic $L$-functions of modular forms of non-critical slope.
The following theorem generalizes the constructions of Bertolini-Darmon \cite{bertolini-darmon-mumford-tate-1996}, \cite{bertolini-darmon-imc-2005}, Bertolini-Darmon-Iovita-Spiess \cite{bertolini-darmon-iovita-spiess-2002}, and Chida-Hsieh \cite{chida-hsieh-p-adic-L-functions}. This also can be regarded as a quaternionic analogue of \cite[$\S$6]{pollack-stevens}.

\begin{thm} \label{thm:main_thm_2_overconvergent_construction}
Let $f_k$ be a newform of slope $h < k-1$ and $\Phi_{f_k}$ be the corresponding overconvergent quaternionic form. Then there exists an element $\varsigma^{(1)} \in \widehat{B}^\times$ (Definition \ref{def:explicit_gross_pts}) such that $\Phi_{f_k} ( \varsigma^{(1)} )$ is the $h$-admissible distribution (Definition \ref{def:the_distribution}) which defines an half of the anticyclotomic $p$-adic $L$-functions of $f_k$ (Definition \ref{defn:p-adic_L-functions}) and satisfies the expected interpolation property (Corollary \ref{cor:interpolation_p-adic_theta}).
\end{thm}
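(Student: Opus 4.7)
The plan is to mimic Pollack–Stevens' overconvergent construction of the cyclotomic $p$-adic $L$-function, with modular symbols replaced by overconvergent quaternionic forms and the cycle $(i\infty)-(0)$ replaced by the explicit Gross point $\varsigma^{(1)}$. First I would invoke Theorem \ref{thm:main_thm_1_classicality}: since $f_k$ has non-critical slope $h < k-1$, its Jacquet–Langlands transfer to a classical quaternionic form in $S^{N^-}_{k}(N^+p, E)^{(<k-1)}$ admits a unique overconvergent lift $\Phi_{f_k} \in S^{N^-}(N^+p, \mathscr{D}_k)^{(<k-1)}$ which is still a $U_p$-eigenvector with eigenvalue $\alpha_p(f_k)$.

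Next I would set $\mu := \Phi_{f_k}(\varsigma^{(1)}) \in \mathscr{D}_k$, interpreted as a distribution on $\mathbb{Z}_p$ whose pushforward to $\Gamma_\infty$ is the object of interest. Admissibility reduces to a slope estimate: the filtration on $\mathscr{D}_k$ coming from the order of vanishing of locally analytic functions is preserved by the $\Sigma_0(p)$-action, so the $U_p$-eigenvalue condition forces the $n$-th moments of $\mu$ to have $p$-adic valuations bounded linearly in $-hn$, which is precisely the $h$-admissibility of Definition \ref{def:the_distribution}.

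The heart of the proof is verifying the interpolation property. For each $n \geq 1$ and each finite-order character $\chi$ of $\Gamma_n$, I would unravel $\int \chi\, d\mu$ using the $U_p$-eigenvalue relation together with the explicit definition of the level-$n$ Gross points reviewed in $\S$\ref{sec:explicit_Gross_points}. The key identity to establish is that iterating $U_p$ on $\varsigma^{(1)}$ reproduces, up to a power of $\alpha_p$, the sum over Chida–Hsieh's level-$n$ explicit Gross points $\{ \varsigma^{(n)}_a \}_{a \in (\mathbb{Z}/p^n)^\times}$. Once this identity is in hand, applying the control/specialization map $\mathscr{D}_k \to \mathrm{Sym}^{k-2}$ from Theorem \ref{thm:main_thm_1_classicality} turns the $\chi$-twisted moment into the classical Bertolini–Darmon theta element of level $n$ evaluated against $\chi$; invoking the Chida–Hsieh/Waldspurger formula (as in \cite{chida-hsieh-p-adic-L-functions}) then yields the interpolation formula of Corollary \ref{cor:interpolation_p-adic_theta}.

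The main obstacle will be this last step, namely making the bookkeeping entirely explicit. One must match (i) the averaging expressing $U_p^n$ as a sum over coset representatives in $\widehat{B}^\times$, acting on distributions via the weight-$k$ action, with (ii) the specific matrix recipe defining $\varsigma^{(n)}_a$ from $\varsigma^{(1)}$, while simultaneously tracking the passage from $\mathscr{D}_k$ to $\mathrm{Sym}^{k-2}$ via evaluation of moments. The decisive advantage, and the reason Chida–Hsieh's construction is used rather than some more canonical alternative, is that the explicit Gross points were designed so that this comparison is a direct matrix calculation at $p$; the entire analogy with the geodesic $(i\infty)-(0)$ is geared toward ensuring that this bookkeeping closes without extra cocycle or normalization factors.
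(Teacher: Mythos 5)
Your proposal is correct and follows essentially the same route as the paper: control theorem to produce $\Phi_{f_k}$, admissibility of its values, the $U_p^n$-expansion identifying $\varsigma^{(1)}\cdot\left(\begin{smallmatrix} p^n & a \\ 0 & 1\end{smallmatrix}\right)$ with the twisted explicit Gross points $x_n(\xi_a)$, specialization to recover the Bertolini--Darmon theta elements, and the Chida--Hsieh formula for interpolation. The only nuance you gloss over is that the last comparison (complex versus $p$-adic theta elements at higher weight) is only a congruence modulo $p^n$, which is exactly why the resulting interpolation formula in Corollary \ref{cor:interpolation_p-adic_theta} is merely a congruence unless $f_k$ is ordinary or of weight two.
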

\subsection{Comparison with the former work}
We summarize the comparison with the former work.
\begin{itemize}
\item Gross proved the interpolation formula for weight two forms of prime level with the twist by unramified ring class character in \cite{gross}, and the formula is generalized to the weight two forms of arbitrary level and ring class characters of arbitrary conductor and finite order in \cite[Theorem 7.1]{zhang-gz2}.
\item In \cite{bertolini-darmon-mumford-tate-1996}, \cite{bertolini-darmon-imc-2005}, the anticyclotomic $p$-adic $L$-functions for $p$-ordinary $p$-stabilized newforms of weight two with the twist by ring class characters of $p$-power conductor and of finite order are constructed via a Stickelberger type argument.
\item In \cite{bertolini-darmon-iovita-spiess-2002}, the construction generalizes to $p$-newforms (the exceptional zero case) of even weight with the twist of unramified ring class characters of finite order. It can be regarded as an overconvergent construction due to \cite[(8)]{bertolini-darmon-iovita-spiess-2002} using the $p$-adic integration on $\mathbb{P}^1(\mathbb{Q}_p)$ \`{a} la Schneider-Teitelbaum. In this construction, a property of $p$-newforms is used essentially.
The interpolation formula for higher weight forms is given in \cite[Proposition 2.16]{bertolini-darmon-iovita-spiess-2002} only for unramified character twists, and the formula for ring class characters of $p$-power conductor is stated as a conjecture \cite[Conjecture 2.17]{bertolini-darmon-iovita-spiess-2002}. Indeed, \cite[Proposition 4.3]{chida-hsieh-p-adic-L-functions} proves \cite[Conjecture 2.17]{bertolini-darmon-iovita-spiess-2002} as stated in \cite[Remark after Proposition 4.3]{chida-hsieh-p-adic-L-functions}. See also \cite{yuan-thesis}.
\item  In \cite{bertolini-darmon-hida-2007}, the construction generalizes to $p$-ordinary $p$-stabilized newforms but it only allows genus characters \cite[page 412]{bertolini-darmon-hida-2007} for the character twist. The construction depends heavily on a quaternionic variant of Hida theory and the Hida theory there does not preserve the integrality.
\item In \cite{chida-hsieh-p-adic-L-functions}, their construction works for $p$-ordinary $p$-stabilized newforms with limitation of weight $k < p+2$ but with much more general twists by any locally algebraic $p$-adic characters of weight $(i, -i)$ with $-k/2 < i < k/2$ as described in \cite[Introduction]{chida-hsieh-p-adic-L-functions}. Here, the restriction of weight comes from the integrality and $\mu$-invariant issues. Also, Gross points are explicitly constructed at the level of $\widehat{B}^\times$. It is very important in our construction.
\item In \cite{castella-longo}, \cite{castella-kim-longo}, the construction works for $p$-ordinary $p$-stabilized newforms with the twist by same characters as in \cite{chida-hsieh-p-adic-L-functions}. This method uses an integral quaternionic Hida theory and big Gross points.
\item In this article, the construction works for $p$-stabilized newforms of non-critical slope and allows the twist by any locally algebraic $p$-adic character of weight $(i, -i)$ with $-k/2 < i < k/2$ arising from an anticyclotomic Hecke character. (cf.~\cite[$\S$4.2]{chida-hsieh-p-adic-L-functions}.)
 If the form is ordinary, then more character twists are allowed as in the case of \cite[Theorem 4.6]{chida-hsieh-p-adic-L-functions}. However, the interpolation formula is given only by $p$-power congruences (Corollary \ref{cor:interpolation_p-adic_theta}) unless the form is ordinary or of weight two.
\end{itemize}

\subsection{Organization}
In $\S$\ref{sec:explicit_Gross_points}, we review the explicit construction of Gross points following Chida-Hsieh, which is a key input of this work.
In $\S$\ref{sec:geometric_gross_pts}, we give a geometric interpretation of the explicit Gross points for the case of weight two forms.
In $\S$\ref{sec:comparison_gross_pts}, we compare these two Gross points. We also review other descriptions of Gross points.
In $\S$\ref{sec:coefficients}, we fix the convention of the coefficient modules for quaternionic forms.
In $\S$\ref{sec:automorphic_forms}, we review quaternionic forms, introduce their overconvergent variants, and prove the control theorem (Theorem \ref{thm:control}).
In $\S$\ref{sec:construction}, we give the overconvergent construction of the distribution (Definition \ref{def:the_distribution}) using the explicit Gross point, which is an half of the $p$-adic $L$-function. Also we recover classical theta elements from the distribution.
In $\S$\ref{sec:weak_interpolation}, we prove the ``weak" interpolation formula (Corollary \ref{cor:interpolation_p-adic_theta}) for the distribution using the formula of Chida-Hsieh (Theorem \ref{thm:interpolation_complex_theta}).
In $\S$\ref{sec:speculations}, we give some speculations and ask questions we do not have answers yet. 
%
%
%
%
%
\section{Explicit Gross points \`{a} la Chida and Hsieh} \label{sec:explicit_Gross_points}
We very closely follow \cite[$\S$2.1 and $\S$2.2]{chida-hsieh-p-adic-L-functions} for the explicit construction.
The novelty of this explicit construction of Gross points given by Chida and Hsieh is that the points lie \emph{at the level of $\widehat{B}^\times$}. This allows us to consider the Gross points at the ``deepest" level.
This explicit approach shows us that it seems more natural to look at the ``spaces at certain infinite levels" for the construction of $p$-adic $L$-functions.

Also, in the case of weight two forms, these Gross points can be realized purely geometrically in terms of the Bruhat-Tits tree for $\mathrm{PGL}_2(\mathbb{Q}_p)$. We will see this in the next section ($\S$\ref{sec:geometric_gross_pts}).

\begin{rem}[on the tame level structure on the domain]
The domain of modular symbols $\mathrm{Div}^0(\mathbb{P}^1 ( \mathbb{Q} )  )$ is completely independent of level structure and the information of the level structure entirely lies in congruence subgroups.
However, the domain of quaternionic forms $B^\times \backslash \widehat{B}^\times / \widehat{R}^{(p), \times}$ depends on its tame level structure obviously. Thus, the shape of Gross points depends on the tame level structure.
\end{rem}

\subsection{Explicit setup} \label{subsec:explicit_setup}
Let $K$ be the imaginary quadratic field of discriminant $-D_K <0$.
Define
$$\vartheta := 
\left \lbrace
    \begin{array}{ll}
     \dfrac{ D_K - \sqrt{-D_K} }{ 2 }  & \textrm{ if } 2 \nmid D_K \\ 
\dfrac{ D_K - 2\sqrt{-D_K} }{ 4 }  & \textrm{ if } 2 \mid D_K
    \end{array}
    \right.
$$
 so that
$\mathcal{O}_K = \mathbb{Z} + \mathbb{Z}\vartheta$.

Let $B$ be the definite quaternion algebra over $\mathbb{Q}$ of discriminant $N^-$ and level $N^+$ under Assumption \ref{assu:parity}.
Then there exists an embedding of $K$ into $B$ (\cite[$\S$3 of chapitre II and $\S$5.C of chapitre III]{vigneras}).
More explicitly, we choose a $K$-basis $(1,J)$ of $B$ so that $B = K \oplus K \cdot J$ such that
\begin{enumerate}
\item $\beta := J^2 \in \mathbb{Q}^\times$ with $\beta <0$,
\item $J \cdot t = \overline{t} \cdot J$ for all $t \in K$,
\item $\beta \in \left( \mathbb{Z}^\times_q \right)^2$ for all $q \mid pN^+$,
\item $\beta \in \mathbb{Z}^\times_q$ for all $q \mid D_K$.
\end{enumerate}
Fix a square root $\sqrt{\beta} \in \overline{\mathbb{Q}}$ of $\beta$.
Fix an isomorphism
$$i := \prod i_q : \widehat{B}^{(N^-)} \simeq \mathrm{M}_2(\mathbb{A}^{(N^-\infty)})$$
as follows:
\begin{enumerate}
\item For each finite place $q \mid N^+p$, the isomorphism
$$i_q : B_q \simeq \mathrm{M}_2(\mathbb{Q}_q)$$ by
\begin{align*}
i_q(\vartheta) & = \left( \begin{matrix}
\mathrm{trd}(\vartheta) & - \mathrm{nrd}(\vartheta) \\
1 & 0
\end{matrix} \right)  \\
i_q(J) & = \sqrt{\beta} \cdot \left( \begin{matrix}
-1 & \mathrm{trd}(\vartheta) \\
0 & 1
\end{matrix} \right) 
\end{align*}
where $\mathrm{trd}$ and $\mathrm{nrd}$ are the reduced trace and the reduced norm on $B$, respectively.
Note that $\sqrt{\beta} \in \mathbb{Z}^\times_q$ here.
\item For each finite place $q \nmid pN^+$, the isomorphism
$$i_q : B_q \simeq \mathrm{M}_2(\mathbb{Q}_q)$$ is chosen so that
$$i_q \left( \mathcal{O}_K \otimes \mathbb{Z}_q  \right) \subseteq \mathrm{M}_2(\mathbb{Z}_q) .$$
\end{enumerate}
We fix an embedding
$i_K : B \hookrightarrow \mathrm{M}_2(K)$ defined by $a+ bJ  \mapsto \left( \begin{matrix} a & b\beta \\ \overline{b} & \overline{a} \end{matrix}  \right)$ and define $i_\mathbb{C} : B \hookrightarrow \mathrm{M}_2(\mathbb{C})$ by $\iota_{\infty} \circ i_K$.

\subsection{The construction of the points}
Fix a decomposition $N^+ \mathcal{O}_K = \mathfrak{N}^+ \cdot \overline{\mathfrak{N}^+}$, which corresponds to the choice of the orientations of local Eichler orders at primes dividing $N^+$.
We define the local Gross point $\varsigma_q \in B^\times_q$ for any rational prime $q$.
\subsubsection{$q \nmid N^+p$}
Let $q \nmid N^+p$ be a prime.
Then
$$\varsigma_q := 1$$ 
in $B^\times_q$.
\subsubsection{$q \mid N^+$}
Let $q \mid N^+$ be a prime, and write $q = \mathfrak{q} \overline{\mathfrak{q}}$ in $\mathcal{O}_K$.
Then
$$\varsigma_q := \frac{1}{\sqrt{D_K}}\cdot \left( \begin{matrix}
\vartheta & \overline{\vartheta} \\
1 & 1
\end{matrix} \right) \in \mathrm{GL}_2(K_\mathfrak{q}) = \mathrm{GL}_2(\mathbb{Q}_q) .$$ 
\subsubsection{$q=p$} \label{subsubsec:p-part_of_explicit_Gross_points}
Suppose that $p = \mathfrak{p}\overline{\mathfrak{p}}$ splits in $K$. Then we put
$$\varsigma^{(n)}_p = \left( \begin{matrix}
\vartheta & -1 \\
1 & 0
\end{matrix} \right)
\left( \begin{matrix}
p^n & 0 \\
0 & 1
\end{matrix} \right)
 \in \mathrm{GL}_2(K_\mathfrak{p}) = \mathrm{GL}_2( \mathbb{Q}_{p} ).$$

Suppose that $p$ is inert in $K$. Then we put
$$\varsigma^{(n)}_p = \left( \begin{matrix}
0 & 1 \\
-1 & 0
\end{matrix} \right)
\left( \begin{matrix}
p^n & 0 \\
0 & 1
\end{matrix} \right)
 \in \mathrm{GL}_2(K_p) = \mathrm{GL}_2( \mathbb{Q}_{p^2} ).$$

\subsubsection{Putting it all together} \label{subsubsec:adelic_explicit_Gross_points}
\begin{defn}[Explicit Gross points] \label{def:explicit_gross_pts}
We define \textbf{the explicit Gross point $\varsigma^{(n)}$ of conductor $p^n$ on $\widehat{B}^\times$} by
$$\varsigma^{(n)} := \varsigma^{(n)}_p \times \prod_{q \neq p} \varsigma_q \in \widehat{B}^\times .$$
\end{defn}

\subsection{Anticyclotomic Galois action on Gross points} \label{subsec:anticyclotomic_action}
We define the map
$$x_n : \widehat{K}^\times \to \widehat{B}^\times$$
by
$$x_n( \xi) := \xi \cdot \varsigma^{(n)} $$
where the action is given by the embedding $K$ into $B$ chosen in $\S$\ref{subsec:explicit_setup}.
Then the set of all points $\left\lbrace x_n( \xi ) : \xi \in \mathbb{A}^{(\infty), \times}_K \right\rbrace$ is called \textbf{twisted explicit Gross points of conductor $p^n$}.

We also define the map
$$x^{-1}_n : \widehat{K}^\times \to \widehat{B}^\times$$ by
$$x^{-1}_n( \xi) := \xi^{-1} \cdot \varsigma^{(n)} .$$
Note that $x^{-1}_n(\xi)$ does not appear as $x_n(\xi)$ since the inverse map $K^\times \to K^\times$ by $x \mapsto 1/x$ does not propagate its domain to $K$.
\subsection{Families of optimal embeddings} \label{subsec:families_of_optimal_embeddings}
Let $\mathcal{O}_{n} = \mathbb{Z} + p^n \mathcal{O}_K$ be the order of $K$ of conductor $p^n$.
Let $R_{N^+p^r}$ be an Eichler order of level $N^+p^r$ prime to $N^-$.
By the argument in \cite[$\S$2.2]{chida-hsieh-p-adic-L-functions}, the embedding of $K$ into $B$ is an optimal embedding of $\mathcal{O}_{n}$ into the Eichler order $B \cap \varsigma^{(n)} \cdot \widehat{R}^\times_{N^+p^r} \cdot \left( \varsigma^{(n)} \right)^{-1}$ if $r \leq n$, i.e.
$$\left( B \cap \varsigma^{(n)} \cdot \widehat{R}^\times_{N^+p^r} \cdot \left( \varsigma^{(n)} \right)^{-1} \right) \cap K = \mathcal{O}_{n} .$$
This is used in the comparison among Gross points defined on different domains ($\S$\ref{subsec:other_gross_pts}).
\begin{rem}
See \cite[$\S$4.1.(12)]{longo-vigni-manuscripta} for another recipe of the families of optimal embeddings. Their recipe calculates the $p$-part only, but the oriented optimal embeddings are determined locally (\cite[Lemma 4.1]{longo-vigni-manuscripta}).
\end{rem}
\section{Geometric Gross points for weight two forms} \label{sec:geometric_gross_pts}
In this section, we give a geometric interpretation of the projection of the explicit Gross points to the double coset space at the $\Gamma_0(p^\infty)$-level, i.e.~the Bruhat-Tits tree. Although it does not gives us the full information of the explicit Gross points, it seems to be helpful for a more theoretical understanding of the Gross points. The geometric description naturally shows us a $p$-adic intuition in the construction of anticyclotomic $p$-adic $L$-functions of modular forms (of weight two, at least).

In the construction of the anticyclotomic $p$-adic $L$-functions of an ordinary newform of weight two, we choose an infinite sequence of consecutive vertices $v_0, v_1, \cdots, v_n, \cdots$ without backtracking on $BT_p$
(We call them \textbf{Gross points of conductor $p^n$ at level 0}).
The oriented edge $e_n$ on $BT_p$ whose source is $v_{n}$ and target is $v_{n+1}$ is called \textbf{Gross points of conductor $p^n$ at level 1}. By construction, the sequence of the edges $e_0, e_1, \cdots, e_n, \cdots$ has coherent direction. We also call these two points \textbf{classical Gross points}.
\begin{rem}
Indeed, the first choice $v_0$ is an infinite line if $p$ splits in $K$, but we will call it a ``vertex" for convenience.
 See \cite[Figure 1]{darmon-iovita} for the picture. 
\end{rem}
The goal of this section is to reinterpret these infinite choices of classical Gross points at the $\Gamma_0(p^r)$ and $\Gamma_0(p^\infty)$-levels. The ``Gross points at the $\Gamma_0(p^\infty)$-level" will be called \textbf{geometric Gross points} (Definition \ref{def:geometric_gross_pts}).

\subsection{Galois-theoretic setup}
Let $\mathcal{O}_K$ be the ring of integers of $K$ and $\mathcal{O}_K[\frac{1}{p}]$ be the maximal $\mathbb{Z}[\frac{1}{p}]$-order in $K$. Let 
$$\widetilde{G}_\infty  = K^\times \backslash \widehat{K}^\times / (\widehat{\mathbb{Q}}^\times \cdot \prod_{\ell\neq p} \mathcal{O}_K[\frac{1}{p}]^\times_\ell )$$
be the Galois group of the ring class field $\widetilde{K}_\infty$ of $K$ of conductor $p^\infty$ so that $\widetilde{G}_\infty = \mathrm{Gal}(\widetilde{K}_\infty/K)$.
\begin{choice} \label{choice:embedding}
We choose an oriented optimal embedding
$\Psi_0 : K \to B$ such that
$\Psi_0 ( K ) \cap R[1/p] = \Psi_0 ( \mathcal{O}_K[1/p] )$
which is as equivalent as the choice in $\S$\ref{subsec:explicit_setup}.
\end{choice}
Then $\Psi_0$ induces a family of optimal embeddings
$\Psi_{p^r}$ such that
$\Psi_{p^r} ( K ) \cap R_{N^+p^r} = \Psi_{p^r} ( \mathcal{O}_{r} )$ for all $r \geq 0$ as in $\S$\ref{subsec:families_of_optimal_embeddings}.

\subsection{Classical Gross points: Gross points at level 0 and 1} \label{subsec:classical_gross_pts}
Let $BT_p$ be the Bruhat-Tits tree for $\mathrm{PGL}_2(\mathbb{Q}_p)$. The action of $\mathrm{PGL}_2(\mathbb{Q}_p)$ on $BT_p$ is given via right conjugation.
The chosen embedding $\Psi_0$ induces the $p$-adic embedding
$(\Psi_0)_p : K^\times_p / \mathbb{Q}^\times_p \hookrightarrow B^\times_p / \mathbb{Q}^\times_p \simeq \mathrm{PGL}_2(\mathbb{Q}_p)$.
This embedding yields the action of $K^\times_p / \mathbb{Q}^\times_p$ on $BT_p$ via left translation.
The structure of $\widetilde{G}_\infty$ can be described by the following short exact sequence:
\[
\xymatrix{
  & K^\times_p / \mathbb{Q}^\times_p  \ar[d] \ar[dr] \\
1 \ar[r] & G_\infty := K^\times_p / \mathbb{Q}^\times_p (\mathcal{O}_K[\frac{1}{p}])^\times \ar[r] & \widetilde{G}_\infty \ar[r] & \mathrm{Cl}(\mathcal{O}_K[\frac{1}{p}]) \ar[r] & 1
 }
\]

\begin{rem}
The class group $\mathrm{Cl}(\mathcal{O}_K[\frac{1}{p}])$ permutes oriented optimal embeddings transitively, and the permutation is explicitly defined in \cite[$\S$2.3]{bertolini-darmon-iovita-spiess-2002}.
\end{rem}

Consider the decreasing filtration of $K^\times_p / \mathbb{Q}^\times_p$
$$\cdots \subseteq U_{n+1} \subseteq U_n \subseteq U_{n-1} \subseteq \cdots  \subseteq U_1 \subseteq U_0  \subseteq K^{\times}_p/ \mathbb{Q}^{\times}_p$$
where $U_0$ is the maximal compact subgroup of $K^\times_p / \mathbb{Q}^\times_p$ and $U_n  =  (1+p^n \mathcal{O}_K \otimes_\mathbb{Z} \mathbb{Z}_p) / (1+p^n\mathbb{Z}_p)$ for each $n \geq 1$. Let $G_n := G_\infty /$(the image of $U_n$ in $G_\infty$) and $\widetilde{G}_n := \widetilde{G}_\infty /$(the image of $U_n$ in $\widetilde{G}_\infty$).

\begin{choice} \label{choice:sequence}
We choose a sequence of consecutive vertices $v_0, v_1, v_2, \cdots$ of $V(BT_p)$ with the coherent orientation and without backtracking such that
$\mathrm{Stab}_{(\Psi_0)_p(K^\times_p / \mathbb{Q}^\times_p)} (v_n) = U_n$
for all $n \geq 0$
as equivalent as the choice in $\S$\ref{subsubsec:p-part_of_explicit_Gross_points} and $\S$\ref{subsubsec:adelic_explicit_Gross_points}.
\end{choice}

\begin{defn}[Classical Gross points]  \label{def:classical_Gross_pts} $ $
\begin{enumerate}
\item Each vertex $v_n$ in the chosen sequence is called \textbf{a (classical) Gross point of conductor $p^n$ at level 0}.
\item Each oriented edge $e_n = (v_n \to v_{n+1})$ is called \textbf{a (classical) Gross point of conductor $p^n$ at level 1}.
\end{enumerate}
\end{defn}

\begin{rem}[on the domain of Gross points]
In many literature, classical Gross points are defined on the quotient graphs of $BT_p$, which are equivalent to the double coset spaces via strong approximation. However, both Gross points on the tree and on the quotient graph give the exactly same result since quaternionic forms are invariant under the quotient.
It seems difficult to observe Gross points at higher level on the quotient graph intuitively since the images of length $n$ line segments in the quotient graph may have very random shapes due to the complication of the quotient graph. For the computation of the graph, see \cite{computing-graphs}.
Recently, it seems that this complication has an application to cryptography, so called isogeny based cryptography.
For example, see \cite[$\S$2.2]{isogeny_crypto}.
\end{rem}

\subsection{A simple observation: towards higher and infinite level} \label{subsec:a_simple_observation}
A natural idea toward the Gross points on a certain space at the infinite level begins with the following question.
\begin{ques}
How can we regard a coherent infinite sequence of classical Gross points itself as one element in a more suitable domain than the set of vertices or oriented edges of the Bruhat-Tits tree?
\end{ques}

We recall a strong approximation result.
\begin{prop}[{\cite[$\S$1.2.(16)]{bertolini-darmon-imc-2005}}]  \label{prop:strong_approx}
The embedding into the $p$-th place
\begin{align*} 
(R[\frac{1}{p}])^\times \backslash B^{\times}_p / \mathbb{Q}^{\times}_p & \simeq  B^{\times} \backslash \widehat{B}^{\times} / \left( \widehat{\mathbb{Q}}^{\times} \prod_{\ell \neq p} R^\times_\ell \right) \\
b_p & \mapsto  (1, \cdots, 1, b_p, 1, \cdots )
\end{align*}
is a canonical bijection.
\end{prop}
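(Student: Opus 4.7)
The plan is a three-step verification: well-definedness of the map, surjectivity via strong approximation for the norm-one algebraic group, and injectivity by reading the defining equation place by place. For well-definedness, the inclusion $\iota\colon B_p^\times\hookrightarrow \widehat{B}^\times$ into the $p$-component is a group homomorphism. Clearly $\iota(\mathbb{Q}_p^\times)\subset \widehat{\mathbb{Q}}^\times$, so the center on the right is swallowed. For $r\in (R[1/p])^\times$ one has $r\in R_\ell^\times$ at every $\ell\neq p$, so the diagonal $r\in B^\times\hookrightarrow \widehat{B}^\times$ differs from $\iota(r)$ only at places $\ell\neq p$, by a factor in $\prod_{\ell\neq p}R_\ell^\times$; hence the left action of $(R[1/p])^\times$ is absorbed into $B^\times$ on the left combined with $\prod_{\ell\neq p}R_\ell^\times$ on the right, and the map descends to double cosets.

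For surjectivity, the key input is strong approximation for the simply connected algebraic group $B^{(1)}=\ker(\mathrm{nrd})$ at the split place $p$: since $B_p^{(1)}\cong \mathrm{SL}_2(\mathbb{Q}_p)$ is noncompact, $B^{(1)}(\mathbb{Q})\cdot B_p^{(1)}$ is dense in $B^{(1)}(\mathbb{A}_f)$, hence $B^{(1)}(\mathbb{A}_f)=B^{(1)}(\mathbb{Q})\cdot B_p^{(1)}\cdot U$ for any open compact $U\subset \prod_{\ell\neq p}B_\ell^{(1)}$. Given $\widehat{b}\in \widehat{B}^\times$, I would first right-multiply by elements of $\widehat{\mathbb{Q}}^\times\prod_{\ell\neq p}R_\ell^\times$ to normalize the reduced norm: because $\mathrm{nrd}(R_\ell^\times)=\mathbb{Z}_\ell^\times$ at every $\ell\neq p$ (true both for Eichler orders at $\ell\mid N^+$ and for the maximal order in the division algebra at $\ell\mid N^-$) and $\mathrm{nrd}(\widehat{\mathbb{Q}}^\times)=\widehat{\mathbb{Q}}^{\times 2}$, the adjusted element can be arranged to have trivial reduced norm at all $\ell\neq p$, hence to lie in $B^{(1)}(\mathbb{A}_f)\cdot B_p^\times$. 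Strong approximation then produces $\beta\in B^\times$ with $\beta\widehat{b}$ in $\iota(B_p^\times)\cdot \prod_{\ell\neq p}R_\ell^\times$, which after unwinding the norm adjustments yields surjectivity. This norm-normalization step is the one I expect to need the most care.

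For injectivity, suppose $\iota(b_p)=\beta\cdot \iota(b'_p)\cdot q\cdot (r_\ell)_\ell$ with $\beta\in B^\times$, $q=(q_\ell)\in \widehat{\mathbb{Q}}^\times$ and $r_\ell\in R_\ell^\times$. Reading the equation at each $\ell\neq p$ gives $\beta q_\ell r_\ell=1$, so $\beta\in q_\ell^{-1}R_\ell^\times$. Set $c:=\prod_{\ell\neq p}\ell^{v_\ell(q_\ell)}\in \mathbb{Q}^\times$, a finite product since $q$ is a unit almost everywhere, and replace the pair $(\beta,q)$ by $(c\beta,c^{-1}q)$: the equation is unchanged, and now $c\beta\in B^\times\cap \prod_{\ell\neq p}R_\ell^\times=(R[1/p])^\times$. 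Reading at $p$ then yields $b_p=(c\beta)\cdot b'_p\cdot (c^{-1}q_p)$ with $c\beta\in (R[1/p])^\times$ acting on the left and $c^{-1}q_p\in \mathbb{Q}_p^\times$ acting on the right, which is precisely the equivalence defining $(R[1/p])^\times\backslash B_p^\times/\mathbb{Q}_p^\times$.
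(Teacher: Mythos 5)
The paper gives no proof of this proposition --- it simply cites \cite[\S 1.2.(16)]{bertolini-darmon-imc-2005} --- so the comparison is against the standard strong-approximation argument, which is exactly the route you take. Your well-definedness and injectivity arguments are correct; in particular the trick of rescaling $(\beta,q)$ by $c=\prod_{\ell\neq p}\ell^{v_\ell(q_\ell)}$ to force $c\beta\in B^\times\cap\prod_{\ell\neq p}R_\ell^\times=(R[1/p])^\times$ is exactly right.

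There is, however, a genuine gap in the surjectivity step, precisely at the point you flagged. You claim that right multiplication by $\widehat{\mathbb{Q}}^\times\prod_{\ell\neq p}R_\ell^\times$ suffices to make the reduced norm trivial at every $\ell\neq p$. But the norms you can reach this way at a fixed $\ell$ form the subgroup $\mathbb{Z}_\ell^\times\cdot\mathbb{Q}_\ell^{\times 2}=\{x\in\mathbb{Q}_\ell^\times: v_\ell(x)\ \text{even}\}$, which has index $2$ in $\mathbb{Q}_\ell^\times$. So any $\widehat{b}$ with $v_\ell(\mathrm{nrd}(\widehat{b}_\ell))$ odd --- for instance the idele that is a uniformizer of the local division algebra $B_\ell$ at some $\ell\mid N^-$ (reduced norm $\ell$) and $1$ elsewhere --- cannot be normalized to lie in $B^{(1)}(\mathbb{A}_f)\cdot B_p^\times$ by right multiplication alone. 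The missing ingredient is Eichler's norm theorem: since $B$ is definite over $\mathbb{Q}$, the reduced norm maps $B^\times$ onto $\mathbb{Q}_{>0}$, so one first chooses $\beta_1\in B^\times$ with $\mathrm{nrd}(\beta_1)=\prod_{\ell\neq p}\ell^{v_\ell(\mathrm{nrd}(\widehat{b})_\ell)}$ and replaces $\widehat{b}$ by $\beta_1^{-1}\widehat{b}$; after that the prime-to-$p$ norms are units, your adjustment by $\prod_{\ell\neq p}R_\ell^\times$ (using $\mathrm{nrd}(R_\ell^\times)=\mathbb{Z}_\ell^\times$) kills them entirely, and strong approximation for $B^{(1)}$ applies as you describe. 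With that one global normalization inserted, the proof is complete.
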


Let $\overset{\to}{E}_r (BT_p)$ be the set of consecutive line segments of length $r$ with coherent orientation of $BT_p$ without backtracking.
Let $v_0, v_1, v_2, \cdots, v_r$ be the sequence of consecutive vertices of $BT_p$ whose stabilizers are $\mathbb{Q}^\times_p \cdot \mathrm{GL}_2(\mathbb{Z}_p)$, $\mathbb{Q}^\times_p \cdot \gamma^{-1} \mathrm{GL}_2(\mathbb{Z}_p) \gamma$, $\cdots$, $\mathbb{Q}^\times_p \cdot \gamma^{-r} \mathrm{GL}_2(\mathbb{Z}_p) \gamma^{r}$
with $\gamma = \left( \begin{smallmatrix}
p & 0 \\ 0 & 1
\end{smallmatrix} \right)$, respectively. Thus, the whole sequence is an element of $\overset{\to}{E}_r (BT_p)$.
Then the stabilizer of the whole sequence is $\mathbb{Q}^\times_p \cdot R^\times_{r,p}$. We observe the following statement.
\begin{prop} \label{prop:orbit}
The $\mathrm{GL}_2(\mathbb{Q}_p)$-orbit of the sequence of consecutive vertices $v_0, v_1, v_2, \cdots, v_r$ without backtracking is $\overset{\to}{E}_r (BT_p)$.
\end{prop}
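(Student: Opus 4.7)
The plan is to establish that $\mathrm{GL}_2(\mathbb{Q}_p)$ acts transitively on $\overset{\to}{E}_r(BT_p)$; combined with the stabilizer computation $\mathrm{Stab}(v_0,\ldots,v_r) = \mathbb{Q}_p^\times \cdot R_{r,p}^\times$ recorded just before the statement, this identifies the orbit of the standard path with the full set by orbit--stabilizer.

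Given an arbitrary target $(w_0,\ldots,w_r) \in \overset{\to}{E}_r(BT_p)$, I would choose lattice representatives $L_i'$ with $w_i = [L_i']$ forming a chain $L_0' \subset L_1' \subset \cdots \subset L_r'$ with $[L_i' : L_{i-1}'] = p$ --- this is always possible by successively rescaling each representative by an appropriate power of $p$, matching the standard choice $v_i = [\gamma^{-i} L_0] = [\langle p^{-i} e_1, e_2\rangle]$. The geometric heart of the argument is the assertion that $L_r'/L_0' \cong \mathbb{Z}/p^r$ is cyclic --- this is where the no-backtracking hypothesis enters. Granting it, the elementary divisor theorem supplies a basis $(e_1', e_2')$ of $L_0'$ with $L_i' = \langle p^{-i} e_1', e_2'\rangle$ for every $i$, and the unique $g \in \mathrm{GL}_2(\mathbb{Q}_p)$ sending $e_j \mapsto e_j'$ then satisfies $gv_i = w_i$.

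Cyclicity itself I would prove by induction on $r$. Assuming $L_{r-1}'/L_0' \cong \mathbb{Z}/p^{r-1}$, fix a basis $(f_1, f_2)$ of $L_0'$ with $L_{r-1}' = \langle p^{-(r-1)} f_1, f_2\rangle$; then automatically $L_i' = \langle p^{-i} f_1, f_2\rangle$ for $0 \le i \le r-1$, since these are the unique index-$p^i$ sublattices of the cyclic quotient $L_{r-1}'/L_0'$. The $p+1$ index-$p$ superlattices of $L_{r-1}'$ correspond to the $p+1$ lines in $p^{-1} L_{r-1}'/L_{r-1}' \cong \mathbb{F}_p^2$. Exactly one of them is $p^{-1} L_{r-2}'$, whose homothety class equals $[L_{r-2}']$ and hence represents the unique backtracking option. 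Each of the remaining $p$ superlattices has the form $M_a = L_{r-1}' + \mathbb{Z}_p (p^{-r} f_1 + a p^{-1} f_2)$ for some $a \in \mathbb{F}_p$; the unipotent change of basis $(f_1, f_2) \mapsto (f_1 + a p^{r-1} f_2,\, f_2) =: (f_1', f_2')$ preserves $L_0'$ and $L_{r-1}'$ and straightens $M_a$ to $\langle p^{-r} f_1', f_2'\rangle$, so $M_a/L_0' \cong \mathbb{Z}/p^r$ is cyclic. The main obstacle is precisely this bookkeeping --- locating $p^{-1} L_{r-2}'$ among the $p+1$ candidates and exhibiting the correct basis change for each non-backtracking choice --- after which the elementary divisor normalization and the orbit--stabilizer conclusion are routine.
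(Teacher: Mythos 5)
Your proof is correct, but it takes a genuinely different route from the paper's. The paper also reduces to transitivity of $\mathrm{GL}_2(\mathbb{Q}_p)$ on $\overset{\to}{E}_r(BT_p)$, but proceeds by a downward induction along the path: it first uses transitivity on oriented edges to match the last two vertices, computes the stabilizer of the already-matched tail $v_k,\dots,v_r$ explicitly as the matrices $\left(\begin{smallmatrix} a & b/p^k \\ p^r c & d\end{smallmatrix}\right)$ with $a,b,c,d\in\mathbb{Z}_p$, and then corrects $w_{k-1}$ to $v_{k-1}$ one step at a time by multiplying by explicit unipotent elements $\left(\begin{smallmatrix}1 & -\Delta/p^k \\ 0 & 1\end{smallmatrix}\right)$ lying in that stabilizer. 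You instead package the whole path as a lattice chain $L_0'\subset\cdots\subset L_r'$ with successive index $p$, isolate the structural content of the no-backtracking hypothesis as the cyclicity of $L_r'/L_0'$ (which is the classical characterization of geodesic segments in the tree, as in Serre's \emph{Trees}), and then produce the transporting element in one stroke via elementary divisors; your bookkeeping of the $p+1$ superlattices of $L_{r-1}'$ correctly identifies $p^{-1}L_{r-2}'$ as the unique backtracking option and the unipotent basis change straightening each $M_a$ is right. What your approach buys is a cleaner conceptual statement and a single change-of-basis matrix; what the paper's buys is that the correcting elements are exhibited inside the tail-stabilizer, in the same explicit coset style as the $U_p$-operator computations used later. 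One cosmetic caveat: you normalized the standard ray as $v_i=[\langle p^{-i}e_1,e_2\rangle]$ while the paper uses the row span of $\mathrm{diag}(p^i,1)$ under a right action; since you prove full transitivity on $\overset{\to}{E}_r(BT_p)$ and the paper's $(v_0,\dots,v_r)$ lies in that set, this does not affect the conclusion.
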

\begin{proof}
Since the action of $\mathrm{GL}_2(\mathbb{Q}_p)$ on $BT_p$ preserves distance, the $\mathrm{GL}_2(\mathbb{Q}_p)$-orbit of the sequence $v_0, v_1, v_2, \cdots, v_r$ is a subset of $\overset{\to}{E}_r (BT_p)$.
It suffices to show the action of $\mathrm{GL}_2(\mathbb{Q}_p)$ on $\overset{\to}{E}_r (BT_p)$ is transitive.
Let $w_0, w_1, w_2, \cdots, w_{r-1}, w_r$ be an arbitrary element of $\overset{\to}{E}_r (BT_p)$. 
Since $\mathrm{GL}_2(\mathbb{Q}_p)$ acts transitively on $\overset{\to}{E}_1 (BT_p)$,
we may assume $w_{r-1}  = v_{r-1}$ and $w_r = v_r$.
Now we apply induction on $r$ in the decreasing direction.
Let $k < r$ be the smallest integer such that $w_k = v_k, w_{k+1} = v_{k+1}, \cdots,$ and $w_r = v_r$.
The stabilizer of the sequence $w_k, \cdots w_r$ consists of the matrices
$\left( \begin{smallmatrix}
a & b/p^k \\ p^r c & d
\end{smallmatrix} \right)
\in \mathrm{GL}_2(\mathbb{Q}_p)$ with $a, b, c, d \in \mathbb{Z}_p$.

Then $w_{k-1}$ corresponds to the homothety class of the lattice
$$\left( \mathbb{Z}_p \times \mathbb{Z}_p \right) \cdot \left( \begin{smallmatrix}
p^k & \Delta \\ 0 & p
\end{smallmatrix} \right)$$
for some $\Delta = 1, \cdots p-1$.
Multiplying $\left( \begin{smallmatrix}
1 & -\Delta/p^k \\ 0 & 1
\end{smallmatrix} \right)$ on the right, the lattice corresponding to $w_{k-1}$ changes to
the lattice 
$\left( \mathbb{Z}_p \times \mathbb{Z}_p \right) \cdot \left( \begin{smallmatrix}
p^{k-1} & 0 \\ 0 & 1
\end{smallmatrix} \right)$ upto homothety.
Since the corresponding vertex is $v_{k-1}$ and $\left( \begin{smallmatrix}
1 & -\Delta/p^k \\ 0 & 1
\end{smallmatrix} \right)$ is in the stabilizer of the sequence $v_k, \cdots, v_r$, we reduce $k$ to $k-1$ by multiplying $\left( \begin{smallmatrix}
1 & -\Delta/p^k \\ 0 & 1
\end{smallmatrix} \right)$ on the right. Repeating the process, we obtain the conclusion.
\end{proof}
Then Proposition \ref{prop:orbit} and the orbit-stabilizer theorem show that there exist bijections:
\[
\xymatrix@C=8em{
\overset{\to}{E}_r (BT_p) \ar[r]^-{\textrm{Prop.~\ref{prop:orbit}}}_-{\simeq} & (v_0, \cdots, v_r) \cdot \mathrm{GL}_2(\mathbb{Q}_p) \ar[r]^-{\textrm{orbit-stabilizer}}_-{\simeq} & \mathrm{GL}_2(\mathbb{Q}_p) / \left( \mathbb{Q}^\times_p \cdot  R^{\times}_{r,p}  \right).
}
\]
This identification gives us a hint to define the case of $r =\infty$.

We define
$$\overset{\to}{E}_\infty (BT_p) := \mathrm{GL}_2(\mathbb{Q}_p) / \left( \mathbb{Q}^\times_p \cdot  R^\times_{N^+p^\infty,p}  \right) \simeq (v_0, v_1, \cdots) \cdot \mathrm{GL}_2(\mathbb{Q}_p).$$
Then each element here has interpretation as an infinite consecutive sequence of vertices from a vertex to a boundary of the Bruhat-Tits tree since each element has the form $(v_0, v_1, \cdots) \cdot \gamma$ where $\gamma \in \mathrm{GL}_2(\mathbb{Q}_p)$.
Also, $\overset{\to}{E}_\infty (BT_p)$ admits natural quotient maps
\[
\xymatrix@R=0em{
\overset{\to}{E}_\infty (BT_p) \ar[r] &  \overset{\to}{E}_r (BT_p) \\
(v_0, v_1, \cdots) \ar@{|->}[r] &  (v_0, v_1, \cdots, v_r)
}
\]
for all $r \geq 0$.
Note that the stabilizer of $v_0$ is $U_0$ and the stabilizer of a boundary of $BT_p$, an element in $\mathbb{P}^1(\mathbb{Q}_p)$, is trivial.

We are now able to give a heuristic definition of geometric Gross points and a more group-theoretic and axiomatic definition is given in $\S$\ref{subsec:group-theoretic-defn}.
\begin{defn}[Heuristic definition of geometric Gross points] \label{def:heuristic}
A \textbf{geometric Gross point} is the consecutive sequence of classical Gross points at level 0 depending on Choice \ref{choice:embedding}, Choice \ref{choice:sequence}, and Definition \ref{def:classical_Gross_pts} in $\overset{\to}{E}_\infty (BT_p)$.
\end{defn}

\subsection{A group theoretic realization and independence} \label{subsec:group-theoretic-defn}
We give a more axiomatic definition of geometric Gross points.
For notational convenience, let
\begin{itemize}
\item $G = \mathrm{GL}_2(\mathbb{Q}_p)$,
\item $B = $ the upper Borel subgroup of $G$,
\item $K_0 = \mathrm{GL}_2(\mathbb{Z}_p)$,
\item $K_r = \Gamma(p^r\mathbb{Z}_p)$, and
\item $Z =$ the center of $G \simeq \mathbb{Q}^\times_p$.
\end{itemize}
Then the Iwasawa decomposition implies that $G = B \cdot K$.
We have natural projection maps
\[
\xymatrix{
G / \left( B \cap K_0Z \right) \ar[r] \ar[d] & G/B \ar[r]^-{\simeq}& \mathbb{P}^1(\mathbb{Q}_p)\\
G/K_0 Z \ar[d]^-{\simeq}\\
V(BT_p)
}
\]
and embedding
\[G / \left( B \cap K_0Z \right) \hookrightarrow V(BT_p) \times \mathbb{P}^1(\mathbb{Q}_p) .\]
We remark that we do not know how to characterize the image explicitly.
 
We axiomatize Definition \ref{def:heuristic}.
\begin{defn}[Geometric Gross points] \label{def:geometric_gross_pts}
Fix a central Gross point/line $v_0$ of conductor 1 and level 0.
For $n \geq 0$, an element $\sigma^{(n)} \in G / \left( B \cap K_0 Z \right)$ is a  \textbf{geometric Gross point of conductor $p^n$} if
\begin{enumerate}
\item the image of $\sigma^{(0)}$ in $G/K_0 Z$ is $v_0$ under the natural projection,
\item the image of $\sigma^{(n)} := \sigma^{(0)} \cdot \left( \begin{matrix} p^n & 0 \\ 0 & 1 \end{matrix}\right)$ in $G/K_0 Z$ has stabilizer $U_n$ under the action of $K^\times_p / \mathbb{Q}^\times_p$ via the chosen optimal embedding $\Psi_0$, and
\item the image of $\sigma^{(n)}$ in $G/B$ does not change for all $n \geq 0$.  
\end{enumerate}
\end{defn}
\begin{prop}[Uniqueness and Galois properties] $ $ \label{prop:properties_of_gross_pts}
\begin{enumerate}
\item By the embedding, Property (1) and (3) in Definition \ref{def:geometric_gross_pts} uniquely determines a point in $G / \left( B \cap K_0 Z \right)$.
\item The image of $\sigma^{(r)}$ in $G/ (Z \cdot R^\times_{N^+p^r,p})$ is $(v_0, \cdots, v_r)$ where $v_i = v_{i-1} \cdot \left( \begin{matrix} p & 0 \\ 0 & 1 \end{matrix}\right)$.
\end{enumerate}
\end{prop}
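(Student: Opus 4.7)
The plan for part (1) is to exploit the injective embedding
\[
G/(B \cap K_0 Z) \hookrightarrow G/K_0Z \times G/B = V(BT_p) \times \mathbb{P}^1(\mathbb{Q}_p)
\]
introduced just before Definition \ref{def:geometric_gross_pts}. Injectivity is a one-line check: if $g_1(B \cap K_0Z)$ and $g_2(B \cap K_0Z)$ have the same images in $G/K_0Z$ and $G/B$, then $g_1^{-1}g_2 \in K_0Z \cap B = B \cap K_0Z$. Property (1) of Definition \ref{def:geometric_gross_pts} fixes the first coordinate of $\sigma^{(0)}$ as $v_0$. Property (3), together with the defining relation $\sigma^{(n)} = \sigma^{(0)} \gamma^n$ from Property (2), pins down the second coordinate: since $\gamma = \left(\begin{smallmatrix} p & 0 \\ 0 & 1 \end{smallmatrix}\right) \in B$, the images of the $\sigma^{(n)}$ in $G/B$ automatically coincide, and the content of Property (3) is that this common image is the unique boundary point $\xi_\infty \in \mathbb{P}^1(\mathbb{Q}_p)$ which is the accumulation point of the geodesic ray $v_0, v_1, v_2, \ldots$ coming from Choice \ref{choice:sequence}. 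Both coordinates of $\sigma^{(0)}$ being now determined, injectivity of the embedding yields uniqueness.

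For part (2), the plan is to combine the orbit-stabilizer theorem with Proposition \ref{prop:orbit}. The paragraph immediately preceding that proposition shows that the total stabilizer (inside $G$) of the base sequence $(v_0, \ldots, v_r)$ is $Z \cdot R^\times_{N^+ p^r, p}$, so orbit-stabilizer provides a canonical bijection
\[
\overset{\to}{E}_r(BT_p) \simeq G/(Z \cdot R^\times_{N^+ p^r, p})
\]
sending this base sequence to the identity coset. The containment $B \cap K_0 Z \subset Z \cdot R^\times_{N^+ p^r, p}$, which holds because upper-triangular $\mathbb{Z}_p$-matrices have vanishing lower-left entry and hence sit inside $\Gamma_0(p^r\mathbb{Z}_p)$, makes the natural projection $G/(B \cap K_0 Z) \twoheadrightarrow G/(Z \cdot R^\times_{N^+ p^r, p})$ well-defined. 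Under the identification with $\overset{\to}{E}_r(BT_p)$, the image of $\sigma^{(r)}$ represents a coherent length-$r$ sequence starting at $v_0$ (forced by Property (1) on the initial term) and travelling in the boundary direction $\xi_\infty$ (forced by Property (3)); by the very construction of $v_0, v_1, v_2, \ldots$ as the geodesic realising these data, this sequence is precisely $(v_0, v_1, \ldots, v_r)$. The recursion $v_i = v_{i-1}\gamma$ itself follows by unwinding the coset identification $v_i \leftrightarrow K_0 Z \gamma^i$ already implicit in Proposition \ref{prop:orbit}: right multiplication by $\gamma$ sends $K_0 Z \gamma^{i-1}$ to $K_0 Z \gamma^i$.

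The main obstacle in executing this plan will be book-keeping the left-versus-right coset and translation-versus-conjugation conventions, since the surrounding discussion tacitly uses both (the stabilizer formula in Proposition \ref{prop:orbit} and the relation $v_i = v_{i-1}\gamma$ commit to a right-action convention on the coset space $V(BT_p)$, while quotients are written $G/H$ throughout). Once these conventions are fixed consistently, both (1) and (2) reduce to direct verifications: (1) to the injectivity of the product embedding together with the interpretation of Property (3) as specifying $\xi_\infty$, and (2) to a single application of orbit-stabilizer combined with the defining formula $\sigma^{(r)} = \sigma^{(0)} \gamma^r$.
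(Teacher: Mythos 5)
Your proposal is correct and follows essentially the same route as the paper: part (1) is proved there by exactly the embedding $G/(B\cap K_0Z)\hookrightarrow V(BT_p)\times\mathbb{P}^1(\mathbb{Q}_p)$ (the paper simply declares it ``obvious''), and part (2) by lifting $v_0$ to a length-$r$ segment in $G/(Z\cdot R^\times_{N^+p^r,p})$ and shifting by $\left(\begin{smallmatrix} p^r & 0\\ 0 & 1\end{smallmatrix}\right)$, which is the same identification you obtain via orbit--stabilizer and Proposition \ref{prop:orbit}. Your version merely spells out the injectivity, the well-definedness of the projection, and the convention book-keeping that the paper leaves implicit.
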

\begin{proof} $ $
\begin{enumerate}
\item Obvious since $G / \left( B \cap K_0Z \right) \hookrightarrow V(BT_p) \times \mathbb{P}^1(\mathbb{Q}_p) $.
\item The image of $\sigma^{(0)}$ under the natural quotient map
$G / \left( B \cap K_0Z \right) \to G/ K_0Z$ is $v_0$ and a lifing $v_0$ to $G/ (Z \cdot R^\times_{N^+p^r,p})$ gives a length $r$ line segment whose target endpoint is $v_0$.
Shifting by $\left( \begin{matrix} p^r & 0 \\ 0 & 1 \end{matrix}\right)$ gives the conclusion.
\end{enumerate}
\end{proof}
For a given optimal embedding $\Psi_0 : K^\times  \to B^\times$,
we define the reversed embedding $\Psi^{-1}_0 : K^\times  \to B^\times$ by $a  \mapsto \Psi_0(a^{-1})$.
Then, for $n \geq 0$, we define the \textbf{dual geometric Gross point to $\sigma^{(n)}$} by an element $\sigma^{(n),*}  \in G / \left( B \cap K_0 Z \right)$ as exactly same as Definition \ref{def:geometric_gross_pts} but with the reversed embedding $\Psi^{-1}_0$ in Condition (2).

\begin{prop}[Independence of choices]
Let $\sigma^{(0)}_1$, $\sigma^{(0)}_2$ be two geometric Gross points.
Then they differ only by the translation by an element of $K^\times_p / \mathbb{Q}^\times_p$.
\end{prop}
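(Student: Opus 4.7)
The plan is to exploit the embedding $G/(B \cap K_0 Z) \hookrightarrow V(BT_p) \times \mathbb{P}^1(\mathbb{Q}_p)$ to write each $\sigma^{(0)}_i$ as a pair $(v_0, \xi_i)$ with $\xi_i \in \mathbb{P}^1(\mathbb{Q}_p)$. Condition~(1) of Definition~\ref{def:geometric_gross_pts} forces the first coordinate to be the chosen central vertex $v_0$. The torus $T_K := \Psi_0(K_p^\times/\mathbb{Q}_p^\times)$ preserves the fiber over $v_0$ since $T_K$ stabilizes $v_0$ (by Condition~(2) at $n=0$), and the induced $T_K$-action on the second coordinate is the standard one on $\mathbb{P}^1(\mathbb{Q}_p) = G/B$ via $\Psi_0$. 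By injectivity of the embedding, the proposition therefore reduces to producing $t \in K_p^\times/\mathbb{Q}_p^\times$ with $\Psi_0(t) \cdot \xi_1 = \xi_2$, i.e.\ showing that any two admissible ends lie in a single $T_K$-orbit.

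Geometrically, Condition~(2) at $n \geq 1$ says that the $n$-th vertex along the geodesic from $v_0$ toward $\xi$ in $BT_p$ has $T_K$-stabilizer exactly $U_n$. I would verify the orbit claim by case analysis on how $p$ behaves in $K$. When $p$ is inert, $T_K$ is anisotropic and acts transitively on $\mathbb{P}^1(\mathbb{Q}_p)$ and on each sphere around $v_0$ in $BT_p$, so the stabilizer of an $n$-th vertex depends only on its distance and equals $U_n$ by Choice~\ref{choice:sequence}; every end is admissible and forms the unique $T_K$-orbit. When $p$ splits, $T_K$ stabilizes an apartment $\mathcal{A}$ through $v_0$ with $T_K/U_0 \simeq \mathbb{Z}$ acting by translation; a vertex $v$ then has stabilizer $U_0$ or $U_{\mathrm{dist}(v,\mathcal{A})}$ according to whether $v \in \mathcal{A}$. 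The condition $\mathrm{Stab}_{T_K}(v_n) = U_n$ for all $n \geq 1$ therefore forces the geodesic to leave $\mathcal{A}$ at the very first step, i.e.\ $\xi \notin \{\xi^+, \xi^-\}$ where $\xi^{\pm}$ are the two ends of $\mathcal{A}$; conversely every such $\xi$ is admissible, and $T_K$ acts simply transitively on $\mathbb{P}^1(\mathbb{Q}_p) \setminus \{\xi^+, \xi^-\}$, giving again a single orbit.

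In either case this produces $t \in K_p^\times/\mathbb{Q}_p^\times$ with $\sigma^{(0)}_2 = \Psi_0(t) \cdot \sigma^{(0)}_1$, as claimed. The crux is the matching between the filtration $\{U_n\}_{n \geq 0}$ and the stabilizer chain along a geodesic ray emanating from $v_0$: in the inert case this falls out of $T_K$-transitivity on spheres together with Choice~\ref{choice:sequence}, while in the split case it reduces to a direct local computation of $T_K \cap \gamma^{-n} K_0 Z \gamma^n$ using the explicit form of $\Psi_0$ from $\S$\ref{subsec:explicit_setup}--$\S$\ref{subsec:families_of_optimal_embeddings}. This calibration is the only place where the particular choice of $\Psi_0$ is used in an essential way, and it is where any technical difficulty of the proof is concentrated.
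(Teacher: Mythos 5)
Your overall route is the same as the paper's: split into the inert and split cases, reduce via the embedding $G/(B\cap K_0Z)\hookrightarrow V(BT_p)\times\mathbb{P}^1(\mathbb{Q}_p)$ to the action of the torus $T_K=\Psi_0(K_p^\times/\mathbb{Q}_p^\times)$ on ends, and then use transitivity of $T_K$ on $\mathbb{P}^1(\mathbb{Q}_p)$ in the inert case versus the three-orbit structure $\lbrace 0\rbrace,\lbrace\infty\rbrace,\mathbb{Q}_p^\times$ in the split case. The paper outsources these facts to Bertolini--Darmon; you prove them on the tree, which makes the argument more self-contained. The inert case of your write-up is fine.

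The split case has a concrete gap. Condition (2) of Definition \ref{def:geometric_gross_pts} at $n=0$ gives $\mathrm{Stab}_{T_K}(v_0)=U_0$, and when $p$ splits $U_0$ is a \emph{proper} subgroup of $T_K\simeq\mathbb{Q}_p^\times$: the quotient $T_K/U_0\simeq\mathbb{Z}$ translates along the apartment $\mathcal{A}$. So $T_K$ does \emph{not} stabilize the vertex $v_0$, and producing $t\in T_K$ with $t\xi_1=\xi_2$ does not yield $t\sigma^{(0)}_1=\sigma^{(0)}_2$, since such a $t$ may shift the base vertex along $\mathcal{A}$. Relatedly, your description of the admissible ends is too large: the geodesic from the fixed vertex $v_0$ toward $\xi$ leaves $\mathcal{A}$ at the very first step iff the projection of $\xi$ onto $\mathcal{A}$ equals $v_0$, which (in coordinates where $T_K$ is diagonal and $v_0=[\mathbb{Z}_p^2]$) cuts out the single $U_0$-orbit $\mathbb{Z}_p^\times$, not all of $\mathbb{P}^1(\mathbb{Q}_p)\setminus\lbrace\xi^+,\xi^-\rbrace=\mathbb{Q}_p^\times$; for $\xi=[pu:1]$ the first step stays on $\mathcal{A}$ and $\mathrm{Stab}_{T_K}(v_1)=U_0\neq U_1$. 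The two slips happen to compensate: the correct statement is that the admissible ends through $v_0$ form one orbit under $U_0=\mathrm{Stab}_{T_K}(v_0)$, which acts simply transitively on them, and that does prove the proposition. Alternatively, under the paper's convention that in the split case $v_0$ is the whole $T_K$-stable line rather than a vertex, the geometric Gross points form a $T_K$-torsor over $\mathbb{Q}_p^\times$ and your $T_K$-transitivity argument applies verbatim, but then ``$T_K$ stabilizes $v_0$'' must be read as stabilizing the line. Either repair is short, but as written the split-case step does not close.
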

\begin{proof} We split the proof into two parts depending on whether $p$ is inert in $K$ or splits in $K$. This is a slightly refined version of \cite[Lemma 4.3]{bertolini-darmon-survey-2001}.
\begin{enumerate}
\item (The inert case)
From \cite[Lemma 2.7]{bertolini-darmon-cerednik-drinfeld-1998}, we can deduce $\widetilde{G}_n$ acts transitively on the classical Gross points of conductor $p^n$ for any $n$. With \cite[Lemma 2.8]{bertolini-darmon-cerednik-drinfeld-1998}, it is easy to see that $\widetilde{G}_n$ acts transitively on higher Gross points of conductor $p^n$.
Note that the subquotient $K^\times_p / \mathbb{Q}^\times_p$ of $\widetilde{G}_\infty$ acts on $\mathbb{P}^1(\mathbb{Q}_p)$ simply transitively due to \cite[$\S$4.1.Step 2]{bertolini-darmon-survey-2001}. This ensures that $\widetilde{G}_\infty$ acts on the set of Gross points at infinite level transitively. 
\item (The split case)
We can deduce the same conclusion for higher Gross points of conductor $p^n$ following the argument in \cite[$\S$3]{bertolini-darmon-uniformization-1999}. However, $K^\times_p / \mathbb{Q}^\times_p$ does \emph{not} act on $\mathbb{P}^1(\mathbb{Q}_p)$ transitively in this case. It has 3 orbits : 0, $\infty$, and $\mathbb{Q}^\times_p$. See \cite[$\S$7.I]{bertolini-darmon-uniformization-1999} for detail. However, any sequence of classical Gross points does not converges to 0 or $\infty$ in $\mathbb{P}^1(\mathbb{Q}_p)$ in this case. See \cite[$\S$2.2 and Figure 1]{darmon-iovita} for detail.
\end{enumerate}
\end{proof}

\section{Comparison among Gross points} \label{sec:comparison_gross_pts}
\subsection{Comparison of explicit and geometric Gross points}
Considering the strong approximation for quaternion algebras (Proposition \ref{prop:strong_approx}), we observe more precise relations among the double coset spaces as follows:
{\small
\[
\xymatrix{
& & \widehat{B}^\times \ar@{->>}[d] & \\
& &  B^\times \backslash \widehat{B}^\times  / \widehat{R}^{(p), \times}  \ar@{->>}[d] &  \txt{no level structure at $p$}\\
B^\times_p / \mathbb{Q}^\times_p \ar@{->>}[r] \ar@{->>}[d] & R[1/p]^\times \backslash B^{\times}_p / \mathbb{Q}^\times_p \ar[r]^-{\simeq}_-{\textrm{Prop.~\ref{prop:strong_approx}}} \ar@{->>}[d] &  B^\times \backslash \widehat{B}^\times  / \left( \mathbb{Q}^\times_p \cdot \widehat{R}^{(p), \times}  \right) \ar@{->>}[d]\\
B^\times_p / \left( \mathbb{Q}^\times_p \cdot R^\times_{N^+p^\infty, p} \right)  \ar@{->>}[d] \ar@{->>}[r] & R[1/p]^\times \backslash  B^\times_p / \left( \mathbb{Q}^\times_p \cdot R^\times_{N^+p^\infty, p} \right) \ar@{->>}[d] \ar[r]^-{\simeq}_-{\textrm{Prop.~\ref{prop:strong_approx}}} & B^\times \backslash \widehat{B}^\times  / \widehat{R}^{\times}_{N^+p^\infty} \ar@{->>}[d] &\txt{$\Gamma_0(p^\infty)$-level structure}\\
B^\times_p / \left( \mathbb{Q}^\times_p \cdot R^\times_{p} \right) \ar@{->>}[r] & R[1/p]^\times \backslash B^\times_p /  R^\times_{p} \ar[r]^-{\simeq}_-{\textrm{Prop.~\ref{prop:strong_approx}}} & B^\times \backslash \widehat{B}^\times  / \widehat{R}^{\times} & \txt{full level structure at $p$}
}
\]
}
Note that $B^\times \backslash \widehat{B}^\times  / \widehat{R}^{(p), \times}$ is the domain for quaternionic forms of arbitrary weight and $B^\times \backslash \widehat{B}^\times  / \widehat{R}^{\times}$ is the domain for quaternionic forms of weight two only.

Let
\begin{itemize}
\item $\varsigma^{(n)}$ be the explicit Gross point on $\widehat{B}^\times$ defined in Definition \ref{def:explicit_gross_pts},
\item $\sigma^{(n)}$ be the geometric Gross point on $B^\times_p / \left( \mathbb{Q}^\times_p \cdot R^\times_{N^+p^\infty, p} \right)$ defined in Definition \ref{def:geometric_gross_pts}, and
\item $v_n$ be the classical Gross point on $B^\times_p / \left( \mathbb{Q}^\times_p \cdot R^\times_{p} \right)$ defined in Definition \ref{def:classical_Gross_pts}.
\end{itemize}
The classical Gross points $v_n$ and the geometric Gross points coincide $\sigma^{(n)}$ by Proposition \ref{prop:properties_of_gross_pts}.(2).
The classical Gross points $v_n$ and the explicit Gross points $\varsigma^{(n)}$ coincide by the construction of the explicit Gross points and theta elements in \cite[$\S$4.1]{chida-hsieh-p-adic-L-functions}. (cf.~Choice \ref{choice:embedding}.)
Thus, these points coincide in the above diagram as follows.
\[
\xymatrix{
& & \varsigma^{(n)} \ar@{|->}[d] & \\
& & \varsigma^{(n)} \ar@{|->}[d] & \\
 & \varsigma^{(n)} \ar@{|->}[r] \ar@{|->}[d] & \varsigma^{(n)}  \ar@{|->}[d] \\
\sigma^{(n)} \ar@{|->}[d] \ar@{|->}[r] & \varsigma^{(n)} = \sigma^{(n)}  \ar@{|->}[d] \ar@{|->}[r] & \varsigma^{(n)} = \sigma^{(n)} \ar@{|->}[d]\\
\sigma^{(n)} = v_n \ar@{|->}[r] & \varsigma^{(n)} = \sigma^{(n)} = v_n \ar@{|->}[r] & \varsigma^{(n)} = \sigma^{(n)} = v_n
}
\]
\subsection{Comparison with other Gross points} \label{subsec:other_gross_pts}
We also consider other definitions of Gross points and the relation with them. All the Gross points here correspond to the classical one (of level 0).
\begin{defn}[Other definitions of Gross points] $ $
\begin{enumerate}
\item 
Consider the $K$-points of the Gross curve of level $N^+$ and discriminant $N^-$
 $$B^\times \backslash \widehat{B}^\times \times \mathrm{Hom}(K, B)/ \widehat{R}^{\times}_{N^+}.$$
Following \cite[$\S$2.1]{bertolini-darmon-mumford-tate-1996}, \cite[$\S$3.1]{longo-hilbert-modular-case}, 
 $(x_n, \Psi)$ is a \textbf{Gross point of conductor $p^n$ on the Gross curve} if 
$\Psi(K) \cap x_n\widehat{R}_{N^+}x^{-1}_n = \Psi(\mathcal{O}_n)$.
\item
Following \cite[$\S$5.3]{cornut-vatsal}, \cite[$\S$4.2]{longo-hilbert-modular-case}, we define the set of Gross points by
 $$\Psi_0(K^\times) \backslash \widehat{B}^\times / \widehat{R}^{\times}_{N^+}$$ 
and a  \textbf{Gross point $x_n \in \Psi_0(K^\times) \backslash \widehat{B}^\times / \widehat{R}^{\times}_{N^+}$ has conductor $p^n$} if
$\Psi_0(K) \cap x_n\widehat{R}_{N^+}x^{-1}_n = \Psi_0(\mathcal{O}_n)$.
\item The equivalence of the above descriptions comes from the map
$$\Psi_0(K^\times) \backslash \widehat{B}^\times / \widehat{R}^{\times}_{N^+} \to B^\times \backslash \widehat{B}^\times \times \mathrm{Hom}(K, B)/ \widehat{R}^{\times}_{N^+}$$
 defined by $x_n \mapsto (x_n, \Psi_0)$. See \cite[$\S$3.1]{longo-hilbert-modular-case} for proof.
\end{enumerate}
\end{defn}
\begin{rem}
Since we start with a chosen \emph{oriented} optimal embedding $\Psi_0$, all the ``CM points" in the original reference become Gross points.
\end{rem}
Then it is not difficult to check these Gross points coincide with the classical points (at least at the level of the values of quaternionic forms) by comparing two equivalent construction of theta elements of modular forms of weight two (\cite[$\S$1.2]{bertolini-darmon-imc-2005} with classical Gross points and \cite[$\S$2.7]{bertolini-darmon-mumford-tate-1996} with Gross points on Gross curves). Their relation can be summarized in the following diagram.
\[
\xymatrix{
\Psi_0(K^\times) \backslash \widehat{B}^\times / \widehat{R}^{\times}_{N^+} \ar[r] \ar[d] & B^\times \backslash \widehat{B}^\times / \widehat{R}^{\times}_{N^+}  & x_n \ar@{|->}[r] \ar@{|->}[d] & v_n\\
B^\times \backslash \widehat{B}^\times \times \mathrm{Hom}(K, B)/ \widehat{R}^{\times}_{N^+} & & (x_n, \Psi_0)
}
\]
\section{Coefficients} \label{sec:coefficients}
Let $E$ be a finite extension of $\mathbb{Q}_p$ and $\mathcal{O} = \mathcal{O}_E$.
Let
$$\Sigma_0(p) := \left\lbrace \left( \begin{smallmatrix} a & b \\ c & d \end{smallmatrix} \right) \in \mathrm{M}_2(\mathbb{Z}_p) : c \in p\mathbb{Z}_p, d \in \mathbb{Z}^\times_p,  \textrm{and} \ ad-bc \neq 0 \right\rbrace$$
be the semigroup we concern to see $U_p$-action. It is not as the same one as given in \cite[$\S$3.3]{pollack-stevens}.
More precisely, if
$r = \left( \begin{smallmatrix} a & b \\ c & d \end{smallmatrix} \right) \in \Sigma_0(p)$
then its adjugate
$r^* = \left( \begin{smallmatrix} d & -b \\ -c & a \end{smallmatrix}  \right)$ satisfies the condition given in \cite[$\S$3.3]{pollack-stevens}.
Since we will define our left action by the adjugate right action given in \cite[$\S$3.3]{pollack-stevens}, the specialization maps will be compatible with the convention of \cite{pollack-stevens}.
\subsection{Symmetric powers}
Our convention is similar to but not exactly same as those of \cite{chida-hsieh-p-adic-L-functions} and \cite{pollack-stevens}.
We also introduce an equivariant pairing to obtain the distribution relation later.
\subsubsection{Semigroup action}
Let
$L_k(E) := \mathrm{Sym}^{k-2}(E^2)$ and
$L_k(\mathcal{O}) := \mathrm{Sym}^{k-2}(\mathcal{O}^2)$.
They admit the \textbf{left} actions of $\Sigma_0(p)$ and $\mathrm{GL}_2(\mathbb{Z}_p)$
via the representation
$$\rho_k :  \Sigma_0(p) \ (\textrm{or } \mathrm{GL}_2(\mathbb{Z}_p)) \subseteq \mathrm{M}_2(\mathcal{O})  \to \mathrm{End}_{\mathcal{O}} (L_k (\mathcal{O}))$$
defined by
$$( \rho_k (r) \circ P ) (X,Y)  = P ( r^* (X,Y) ) = P(dX - bY, -cX + aY)$$
where $r = \left( \begin{smallmatrix} a & b \\ c & d \end{smallmatrix} \right) \in \Sigma_0(p)$ and $P(X,Y) \in L_k(\mathcal{O})$ is a homogeneous polynomial of variables $X, Y$ of degree $k-2$.

\subsubsection{An (ad hoc) equivariant pairing} \label{subsubsec:pairing}
Consider the following perfect $\mathrm{GL}_2(\mathbb{Q}_p)$-equivariant pairing:
\[
\xymatrix@R=0em{
\langle -, - \rangle_k : L_k \times L_k(\mathrm{det}^{2-k}) \ar[r] & E \\
 (X^i \cdot Y^{k-2-i}, X^{k-2-j} \cdot Y^{j}) \ar@{|->}[r] &  (- 1)^{i} \cdot \left( {\begin{matrix}
 k-2 \\ i
 \end{matrix}} \right)^{-1} \cdot \delta_{i,j}
}
\]
where $\delta_{i,j}$ is the Kronecker delta. The equivariant property is given as follows:
$$\langle \rho_k(r) \circ P_1(X,Y),\rho^*_k(r) \circ P_2(X,Y) \rangle_k = \langle  P_1(X,Y),  P_2(X,Y) \rangle_k $$
where $\rho^*_k := \rho_k \otimes \mathrm{det}^{2-k}$. We also write $L_k(2-k) = L_k(\mathrm{det}^{2-k})$.
\begin{rem}[Normalization of $\mathrm{GL}_2(\mathbb{Q}_p)$-action]
 In \cite[$\S$2.3]{chida-hsieh-p-adic-L-functions}, the action of $\mathrm{GL}_2(\mathbb{Q}_p)$ is unitarily normalized, i.e.~the action on the both side is given by $\rho_k \otimes \mathrm{det}^{\frac{2-k}{2}}$. However, the unitary normalization is not compatible with the integral theory of quaternionic forms. See $\S$\ref{subsec:integral_normalization} and Corollary \ref{cor:control_ordinary}.
\end{rem}
The pairing itself is not expected to be $p$-integral unless $k -2 < p$ since it involves $(k-2)!$ in the denominator. In other words, we have
$$\langle -, - \rangle_k : L_k(\mathcal{O}) \times L_k(\mathrm{det}^{2-k})(\mathcal{O}) \to \frac{1}{(k-2)!}\mathcal{O} .$$
\subsection{Distributions (as coefficient modules)} \label{subsec:distributions}
In order to introduce the $p$-adic deformation of quaternionic forms, we record the standard notion of $p$-adic distributions and fix convention here. Since distribution modules themselves are independent of types of $\mathbb{Z}_p$-extensions, the argument of \cite[$\S$3.3 and 3.4]{pollack-stevens} applies to our setting directly. See \cite[$\S$3]{pollack-stevens} for detail.

Let $f(z)$ be a rigid analytic or locally analytic function on $\mathbb{Z}_p$.
It admits right weight $k$ action of $\Sigma_0(p)$ as follows :
\begin{align} \label{eqn:action_on_functions}
\left( f \mid_k r \right) (z) & :=  (cz+d)^{k-2} \cdot f \left( \dfrac{az+b}{cz+d} \right) .
\end{align}

Let $\mathbf{D}_k$/$\mathscr{D}_k$ be the module of $E$-valued rigid analytic/locally analytic distributions on $\mathbb{Z}_p$ with \textbf{left} weight $k$ action of $\Sigma_0(p)$, respectively. 
The action is defined as dual :
\begin{align*}
\left( r \circ \mu_k \right) (f(z)) := \mu_k  ( ( f \mid_{k} r) (z))
\end{align*}
where $\mu_k \in \mathbf{D}_k$ or $\mathscr{D}_k$.
Note that there is a natural inclusion $\mathscr{D}_k \hookrightarrow \mathbf{D}_k$ as in \cite[$\S$3.1]{pollack-stevens}.
We regard the action as the representation $\widetilde{\rho}_k : \Sigma_0(p) \to \mathrm{Aut}_E(D_k)$ where $D_k = \mathbf{D}_k$ or $\mathscr{D}_k$.

The $\Sigma_0(p)$-equivariant specialization map $\mathrm{sp}_k : D_k \to L_k$ is defined by
$$\mathrm{sp}_k : \mu_k \mapsto \int_{\mathbb{Z}_p} (Y-zX)^{k-2} d\mu_k(z) $$
where $D_k = \mathbf{D}_k$ or $\mathscr{D}_k$.
We follow \cite[$\S$3.4]{pollack-stevens} for the convention of the specialization. Also, we defined the actions of $\Sigma_0(p)$ on both sides to make the map equivariant.

\begin{notation}
We omit $\rho_k$ and $\widetilde{\rho}_k$ if there is no confusion.
\end{notation}

\section{Overconvergent quaternionic forms and control theorems} \label{sec:automorphic_forms}
In this section, we define quaternionic forms and their overconvergent variants.
We prove the control theorem to compare them. We also give a completed cohomological description of quaternionic forms.

\subsection{Classical $p$-adic quaternionic forms}
\begin{defn}[Classical $p$-adic quaternionic forms] $ $
\begin{itemize}
\item A continuous function $\phi_k : \domain \to L_k(E)$ is called a \textbf{$p$-adic quaternionic form of discriminant $N^-$, level $N^+p^r$, and weight $k$} if $\phi_k$ satisfies the following transformation property:
$$\phi_k(\alpha b u) = \rho_k(u^{-1}_p) \circ \phi_k(b)$$
where $\alpha \in B^\times$ and $u \in \widehat{R}^\times_{N^+p^r}$, and $u_p$ is the $p$-part of $u$.
\item The space of such $p$-adic quaternionic forms is denoted by $S^{N^-}_k(N^+p^r, E)$ if $k \neq 2$. If $k = 2$, then $S^{N^-}_k(N^+p^r, E)$ denotes the space of $p$-adic quaternionic forms which are not constant. 
\item If one change the level structure by $U_r$, $Z_r$, or other level structures, one may easily define
$S^{N^-}_k(N^+, p^r, E)$, $S^{N^-}_k(Z_r, E)$, or spaces of quaternionic forms of various levels.
\end{itemize}
\end{defn}
\begin{rem}
Our quaternionic forms corresponds to ``$\ell$-adic modular forms" or ``$\ell$-adic avatar" (with $\ell = p$) in \cite[$\S$4.1]{chida-hsieh-p-adic-L-functions}. See $\S$\ref{subsec:complex_p-adic_quaternionic_forms} for detail.
\end{rem}
Let $\mathbb{T}^{N^-}_k(N^+p^r)_E$ be the full Hecke algebra over $E$ acting faithfully on $S^{N^-}_k(N^+p^r, E)$ and
 $\mathbb{T}^{N^-}_k(N^+,p^r)_E$ be the full Hecke algebra over $E$ acting faithfully on $S^{N^-}_k(N^+, p^r, E)$.

We compare their structures with classical modular forms.
Let $S_k(Np^r, E)^{N^-\textrm{-new}}$ or $S_k(N,p^r, E)^{N^-\textrm{-new}}$ be the $N^-$-new subspace of cuspforms of weight $k$ and level $\Gamma_0(Np^r)$ or $\Gamma_0(N) \cap \Gamma_1(p^r)$ whose Fourier coefficients lie in $E$ and $\mathbb{T}_k(N^+p^r)^{N^-\textrm{-new}}_E$ or $\mathbb{T}_k(N^+,p^r)^{N^-\textrm{-new}}_E$ be the corresponding quotient Hecke algebra, respectively.

Then the Jacquet-Langlands correspondence (over fields) shows the following relation between classical modular forms and quaternionic forms.
\begin{thm}[{\cite[$\S$3.3]{longo-vigni-control}}] \label{thm:jacquet-langlands}
There exist isomorphisms of Hecke algebras over $E$
\begin{align*}
\mathbb{T}^{N^-}_k(N^+p^r)_E & \simeq \mathbb{T}_k(N^+p^r)^{N^-\textrm{-new}}_E\\
\mathbb{T}^{N^-}_k(N^+,p^r)_E & \simeq \mathbb{T}_k(N^+,p^r)^{N^-\textrm{-new}}_E
\end{align*}
and (non-canonical) isomorphisms of Hecke modules
\begin{align*}
S^{N^-}_k(N^+p^r, E) & \simeq S_k(Np^r, E)^{N^-\textrm{-new}} \\
S^{N^-}_k(N^+,p^r, E) & \simeq S_k(N,p^r, E)^{N^-\textrm{-new}}
\end{align*}
as $\mathbb{T}^{N^-}_k(N^+p^r)_E$-modules and $\mathbb{T}^{N^-}_k(N^+, p^r)_E$-modules, respectively.
For a classical modular form $f_k$, we denote the corresponding quaternionic form by $\phi_{f_k}$.
\end{thm}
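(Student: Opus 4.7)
The plan is to reduce this to the classical Jacquet--Langlands correspondence between the automorphic spectra of $B^\times$ and $\mathrm{GL}_2$, combined with the local dictionary between level structures and newform conditions. First I would fix an embedding $E \hookrightarrow \mathbb{C}$ (through a choice of lift $\overline{\mathbb{Q}} \to \mathbb{C}$ compatible with $\iota_p$ and $\iota_\infty$) and use it to identify $S^{N^-}_k(N^+p^r, E) \otimes_E \mathbb{C}$ with the classical space of $\mathbb{C}$-valued weight $k$, level $\widehat{R}^\times_{N^+p^r}$ automorphic forms on the definite quaternion algebra $B^\times$. Since $B$ is definite, this space decomposes, with multiplicity given by the dimension of the $\widehat{R}^\times_{N^+p^r}$-fixed part of each factor, into a direct sum over irreducible automorphic representations $\pi_B$ of $B^\times(\mathbb{A})$.

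Next I would apply Jacquet--Langlands to obtain a bijection between such $\pi_B$ (discarding one-dimensional representations) and cuspidal automorphic representations $\pi$ of $\mathrm{GL}_2(\mathbb{A})$ whose local components $\pi_q$ at each $q \mid N^-$ are discrete series. Because $N^-$ is square-free by Assumption \ref{assu:parity}, this discrete-series condition at each such $q$ is precisely the Steinberg (up to unramified twist) condition, which classically is the statement that the form is new at $q$. The level $\widehat{R}^\times_{N^+p^r}$ on the $B$-side then matches $\Gamma_0(Np^r)$ on the $\mathrm{GL}_2$-side, and likewise for the $\Gamma_0(N) \cap \Gamma_1(p^r)$-variant by replacing $R^\times_{N^+p^r,p}$ with the corresponding $\Gamma_1$-type subgroup at $p$, once one restricts to the $N^-$-new subspace, because the local dimensions of fixed vectors agree prime by prime. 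Hecke compatibility away from $N^-$ is automatic from the canonical identification of the local spherical Hecke algebras via the fixed $i_q$, together with the fact that both sides act through the same Satake parameters of $\pi_q$.

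Finally I would descend from $\mathbb{C}$ back to $E$: the Hecke eigenvalues of the relevant newforms are algebraic and lie in $E$ by our enlargement of $E$, so the Hecke-isotypic components on either side are defined over $E$ and have matching dimensions, which forces the faithful Hecke algebras to be canonically isomorphic as $E$-algebras and yields a non-canonical Hecke module isomorphism after picking a basis on each isotypic component.

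The main obstacle I anticipate is the careful local bookkeeping of level structures: one must verify that the chosen isomorphisms $i_q$ together with the orientation on $R_{N^+p^r}$ pair up so that $\widehat{R}^\times_{N^+p^r}$-invariants on the $B$-side correspond exactly to $\Gamma_0(Np^r)$- (respectively $\Gamma_0(N)\cap \Gamma_1(p^r)$-) invariants on the $\mathrm{GL}_2$-side at every finite place, supplemented by the Atkin--Lehner new-vector dimension count at primes dividing $N^-$. A small subtlety in the weight two case is that the constant functions form a one-dimensional subspace on the $B$-side that does not correspond to cusp forms on the $\mathrm{GL}_2$-side; this is handled by the built-in convention in the definition of $S^{N^-}_2(N^+p^r, E)$, which excludes constants from the outset.
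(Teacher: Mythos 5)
Your outline is correct and is essentially the standard argument underlying the result: the paper gives no proof of its own, simply citing Longo--Vigni (\S 3.3 of the control-theorem paper) and Bertolini--Darmon--Hida-families (Theorem 2.4) for exactly this representation-theoretic Jacquet--Langlands transfer combined with the local new-vector dimension count at primes dividing the squarefree discriminant $N^-$ and descent from $\mathbb{C}$ to $E$ via algebraicity of Hecke eigenvalues. The points you flag --- the local level bookkeeping through the fixed isomorphisms $i_q$ and the exclusion of constant functions in weight two --- are precisely the ones handled in the cited references, so nothing further is needed.
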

\begin{rem}[on Theorem \ref{thm:jacquet-langlands}] See also \cite[Theorem 2.4]{bertolini-darmon-hida-2007} for $r= 0, 1$ cases. From this case, one can deduce the general result without any serious difficulty as in \cite[$\S$3.3]{longo-vigni-control}.
\end{rem}

\subsection{Overconvergent quaternionic forms and control theorems}
We mainly follow \cite{pollack-stevens}. See also \cite[$\S$3]{wan-xiao-zhang} and\cite[$\S$2]{liu-wan-xiao}.
Let $D_k$ be either $\mathscr{D}_k$ or $\mathbf{D}_k$.
\begin{defn}[Overconvergent quaternionic forms] $ $
\begin{enumerate}
\item A continuous function $\Phi_k : \domain \to D_k$ is a \textbf{$D_k$-valued overconvergent quaternionic form of discriminant $N^-$, level $N^+p^r$, and weight $k$}
if $\Phi_k$ satisfies the following transformation property :
$$\Phi_k(\alpha b u) = \widetilde{\rho}_k(u^{-1}_p) \circ \Phi_k(b)$$
where $\alpha \in B^\times$ and $u \in \widehat{R}^\times_r$, and $u_p$ is the $p$-part of $u$.
\item The space of such overconvergent quaternionic forms is written as
$S^{N^-} (N^+p^r, D_k)$.
\item More generally, for a $\Sigma_0(p)$-module $A$, we similarly define $A$-valued quaternionic forms and denote the space of such forms by $S^{N^-} (N^+p^r, A)$ and its variants.
\end{enumerate}
\end{defn}

Using the specialization map as in $\S$\ref{subsec:distributions}, we give an explicit relation between classical and overconvergent quaternionic forms for the non-critical slope case.
Let $S^{(<k-1)}$ be the submodule of a Hecke module $S$ of slope $< k-1$ and $\mathbb{T}^{N^-}_k(N^+p)^{(<k-1)}_E$ be the Hecke algebra acting faithfully on the subspace of the forms of slope less than $k-1$.

\begin{thm}[Control theorem] \label{thm:control}
There exist $\mathbb{T}^{N^-}_k(N^+p)^{(<k-1)}_E$-equivariant isomorphisms
\begin{align*}
S^{N^-} (N^+p, \mathscr{D}_k)^{(<k-1)} & \simeq S^{N^-} (N^+p, \mathbf{D}_k)^{(<k-1)} \\
& \simeq S^{N^-}_k (N^+p, E)^{(<k-1)}
\end{align*}
where the first map is induced from the natural inclusion between distributions and the second map is induced from the specialization map $\mathrm{sp}_k$.
\end{thm}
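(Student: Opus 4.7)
The plan is to adapt the Pollack-Stevens argument \cite{pollack-stevens} to the quaternionic setting. Since $B$ is definite, the double coset space $B^\times \backslash \widehat{B}^\times / \widehat{R}^\times_{N^+p}$ is finite; fixing representatives $\{g_i\}$ and letting $\Gamma_i$ denote the $p$-component of $g_i^{-1} B^\times g_i \cap \widehat{R}^\times_{N^+p}$ (a finite group modulo center), the space $S^{N^-}(N^+p, A)$ is identified with $\bigoplus_i A^{\Gamma_i}$ under the natural left $\Sigma_0(p)$-action for any $\Sigma_0(p)$-module $A$. Since each $\Gamma_i$ is finite and we work over a field of characteristic zero, the functor $A \mapsto S^{N^-}(N^+p, A)$ is exact.

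I first address the second isomorphism $S^{N^-}(N^+p, \mathbf{D}_k)^{(<k-1)} \simeq S^{N^-}_k(N^+p, E)^{(<k-1)}$. Consider the $\Sigma_0(p)$-equivariant short exact sequence
\[
0 \to N_k \to \mathbf{D}_k \xrightarrow{\mathrm{sp}_k} L_k \to 0,
\]
where $N_k := \ker(\mathrm{sp}_k)$ is the submodule of distributions killing every polynomial of degree $\leq k-2$ (since $\mathrm{sp}_k(\mu) = \int (Y-zX)^{k-2} d\mu(z)$). Applying $S^{N^-}(N^+p, -)$ yields a short exact sequence of Hecke modules, and the crucial input is the slope estimate that $U_p$ acts on $N_k$ with every slope at least $k-1$. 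Indeed, for a matrix $r = \left(\begin{smallmatrix} p & -a \\ 0 & 1 \end{smallmatrix}\right) \in \Sigma_0(p)$ appearing in the sum defining $U_p$, the weight-$k$ action yields $(r \cdot \mu)(z^j) = \sum_i \binom{j}{i} p^i (-a)^{j-i} \mu(z^i)$; on $N_k$ all indices $i < k-1$ contribute zero, leaving only terms with $p^i$ for $i \geq k-1$. Using the sup norm $\|\mu\| = \sup_j |\mu(z^j)|$ on $\mathbf{D}_k$ and the ultrametric triangle inequality, this gives $\|U_p \mu\| \leq p^{-(k-1)} \|\mu\|$ on $S^{N^-}(N^+p, N_k)$. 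Consequently, the slope decomposition of $U_p$ forces $S^{N^-}(N^+p, N_k)^{(<k-1)} = 0$, and the specialization induces the desired Hecke-equivariant isomorphism.

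For the first isomorphism $S^{N^-}(N^+p, \mathscr{D}_k)^{(<k-1)} \simeq S^{N^-}(N^+p, \mathbf{D}_k)^{(<k-1)}$, the inclusion $\mathscr{D}_k \hookrightarrow \mathbf{D}_k$ is compatible with specialization to $L_k$, and the identical slope estimate applies to $\ker(\mathrm{sp}_k|_{\mathscr{D}_k})$ since the formula for $(r \cdot \mu)(z^j)$ is the same on either distribution module. Hence both overconvergent spaces map isomorphically onto $S^{N^-}_k(N^+p, E)^{(<k-1)}$ via $\mathrm{sp}_k$, and the commutative triangle forces the map between them induced by the inclusion to also be an isomorphism.

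The main obstacle is the operator-theoretic foundation: one must equip $\mathbf{D}_k$ and $\mathscr{D}_k$ with Banach structures compatible with the $\Sigma_0(p)$-action so that $U_p$ acts as a completely continuous operator on $S^{N^-}(N^+p, \mathbf{D}_k)$, thereby guaranteeing the existence and Hecke-equivariance of the slope $<k-1$ decomposition used above. This machinery is developed in the modular symbol case in \cite{pollack-stevens}; transporting it to the quaternionic setting requires only the book-keeping change of replacing compactly supported group cohomology by the finite direct sum $\bigoplus_i A^{\Gamma_i}$. The integrally refined version (Corollary \ref{cor:control_ordinary}) demands additional care tracking the $\mathcal{O}$-structure through the same argument, which explains why that statement is restricted to slope zero.
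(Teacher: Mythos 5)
Your argument is correct in its essentials, but it takes a genuinely different route from the paper. The paper follows M.~Greenberg's iterative lifting strategy \cite{greenberg-israel}: one filters $\mathbf{D}_k(\mathcal{O})$ by the $\Sigma_0(p)$-stable submodules $\mathrm{Fil}^m\mathbf{D}_k(\mathcal{O})$, works with the finite approximation modules $A^m\mathbf{D}_k(\mathcal{O})$, and lifts a classical eigenform one approximation step at a time by applying $\tfrac{1}{\alpha}U_p$ to an arbitrary set-theoretic lift (Lemma \ref{lem:lemma_for_convergence} through Proposition \ref{prop:lifting}); injectivity and surjectivity of $\mathrm{sp}_{k,*}$ on each $U_p=\alpha$ eigenspace are then checked separately. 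Your route is the Stevens/Pollack--Stevens one: exactness of $A\mapsto\bigoplus_i A^{\Gamma_i}$ (valid since each $\Gamma_i$ is finite and $E$ has characteristic zero) turns $0\to N_k\to\mathbf{D}_k\to L_k\to 0$ into a short exact sequence of Hecke modules, and the moment computation $\bigl(\left(\begin{smallmatrix}p&a\\0&1\end{smallmatrix}\right)\circ\mu\bigr)(z^j)=\sum_i\binom{j}{i}p^i a^{j-i}\mu(z^i)$ gives $\|U_p\|\le p^{-(k-1)}$ on $S^{N^-}(N^+p,N_k)$, killing its slope $<k-1$ part. Note that your key estimate is exactly the paper's Lemma \ref{lem:lemma_for_convergence} in disguise, so the two proofs share their computational core; what differs is the packaging. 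Your version is shorter and cleaner, at the price of invoking the compact-operator/slope-decomposition formalism and of discarding the integral information: Greenberg's method constructs the lift inside $\mathbf{D}_k(\mathcal{O})$ by design, which is what makes the refinement of Corollary \ref{cor:control_ordinary} and the normalization of \S\ref{subsec:integral_normalization} essentially free, whereas an $E$-linear exact-sequence argument does not see the lattice.

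One step needs repair. Your treatment of the first isomorphism leans on applying ``the identical slope estimate'' to $\ker(\mathrm{sp}_k|_{\mathscr{D}_k})$, but the sup-of-moments bound is a norm statement on $\mathbf{D}_k$, whose elements have uniformly bounded moments; elements of $\mathscr{D}_k$ need not, so that norm is simply not available there, and deducing the first isomorphism from ``both specialize isomorphically'' is circular until you have proved the $\mathscr{D}_k$-version of the vanishing independently. The correct argument --- and the one the paper cites, \cite[Lemma 5.3]{pollack-stevens-critical} --- is that $U_p$ improves the radius of analyticity, so every finite-slope class in $S^{N^-}(N^+p,\mathscr{D}_k)$ already lies in $S^{N^-}(N^+p,\mathbf{D}_k)$; prove the first isomorphism that way first, and only then run your moment estimate on $\mathbf{D}_k$.
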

For the first isomorphism, see \cite[Lemma 5.3]{pollack-stevens-critical}. We prove the second isomorphism in \S\ref{subsubsec:proof_control_thm}.

\subsection{Integral normalizations and integral refinement of control theorems} \label{subsec:integral_normalization}
We introduce an \emph{optimal} integral normalizations of classical and overconvergent quaternionic forms. These will be used for the slope zero case.
\begin{choice}[of the ``explicit" integral normalizations] Note that all the choices implies that nonvanishing of the form modulo $\varpi$.
\begin{enumerate}
\item If $\Phi_k \in S^{N^-} (N^+p, \mathscr{D}_k(\mathcal{O}))^{(0)}$, then we normalize that the values of $\Phi_k$ lie in $\mathscr{D}_k(\mathcal{O})$ but not in $\varpi \mathscr{D}_k(\mathcal{O})$.
\item If $\phi_k \in S^{N^-}_k (N^+p, \mathcal{O})^{(0)}$, then we normalize that the values of $\phi_k$ lie in $L_k(\mathcal{O}) = \mathrm{Sym}^{k-2}(\mathcal{O})$ but not in $\varpi \mathrm{Sym}^{k-2}(\mathcal{O})$.
\end{enumerate}
Choice (1) and (2) are compatible under the specialization map $\mathrm{sp}_k$.
\end{choice}

Let $S^{(0)}$ be the slope zero submodule of a Hecke module $S$ and $\mathbb{T}^{N^-}_k(N^+p)^{(0)}$ be the slope zero quotient of $\mathbb{T}^{N^-}_k(N^+p)$.
\begin{cor}[Integral refinement of Theorem \ref{thm:control}] \label{cor:control_ordinary}
There exists a $\mathbb{T}^{N^-}_k(N^+p)^{(0)}$-equivariant isomorphism
$$S^{N^-} (N^+p, \mathscr{D}_k(\mathcal{O}))^{(0)} \simeq S^{N^-}_k (N^+p, \mathcal{O})^{(0)}.$$
\end{cor}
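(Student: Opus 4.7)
The plan is to refine the proof of Theorem \ref{thm:control} by tracking $\mathcal{O}$-integrality throughout and invoking Hida's ordinary idempotent $e = \lim_{n \to \infty} U_p^{n!}$, which in the slope zero setting is available at the $\mathcal{O}$-integral level. First I record the $\Sigma_0(p)$-equivariant short exact sequence of $\mathcal{O}$-modules
\begin{equation*}
0 \longrightarrow K \longrightarrow \mathscr{D}_k(\mathcal{O}) \xrightarrow{\mathrm{sp}_k} L_k(\mathcal{O}) \longrightarrow 0,
\end{equation*}
where $K := \ker(\mathrm{sp}_k)$; integral surjectivity of $\mathrm{sp}_k$ follows from its definition via moments together with the standard $\mathcal{O}$-integral Amice basis of $\mathscr{D}_k(\mathcal{O})$. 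The kernel $K$ consists of distributions whose moments of degree $\leq k-2$ vanish, and carries a decreasing $\Sigma_0(p)$-stable $\mathcal{O}$-filtration $\mathrm{Fil}^{\bullet} K$ indexed by the order of vanishing of moments. The key input is that $U_p$ acts on $K$ with slope $\geq k-1$ integrally, in the sense that $U_p \cdot \mathrm{Fil}^n K \subseteq \varpi^{k-1} \cdot \mathrm{Fil}^n K$ up to unit factors from binomial coefficients; in particular $U_p$ is topologically nilpotent on $K$.

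Next I pass to quaternionic forms. Since $\domain$ is a finite set and the stabilizer groups of the double coset representatives have order prime to $p$ (as $p \geq 3$ and these groups are quotients of $\{\pm 1\}$-type units of the Eichler orders), the invariants functor defining $S^{N^-}(N^+p, -)$ is exact on $\mathcal{O}$-modules, yielding the short exact sequence
\begin{equation*}
0 \longrightarrow S^{N^-}(N^+p, K) \longrightarrow S^{N^-}(N^+p, \mathscr{D}_k(\mathcal{O})) \xrightarrow{\mathrm{sp}_k^{\ast}} S^{N^-}_k(N^+p, \mathcal{O}) \longrightarrow 0
\end{equation*}
of $\mathcal{O}[U_p]$-modules. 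The topological nilpotence of $U_p$ on $K$ propagates to $S^{N^-}(N^+p, K)$, so Hida's idempotent $e$ annihilates this submodule. To conclude, given $\phi \in S^{N^-}_k(N^+p, \mathcal{O})^{(0)}$, any $\mathcal{O}$-valued set-theoretic lift $\widetilde{\Phi} \in S^{N^-}(N^+p, \mathscr{D}_k(\mathcal{O}))$ (which exists by the surjection above) yields the overconvergent lift $\Phi := e \cdot \widetilde{\Phi}$, which is integral because $e$ acts on the integral structure, and which satisfies $\mathrm{sp}_k^{\ast} \Phi = e \cdot \phi = \phi$ by the Hecke equivariance of $\mathrm{sp}_k$ and the ordinarity of $\phi$. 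Injectivity of $\mathrm{sp}_k^{\ast}$ on the slope zero part is immediate from $e \cdot S^{N^-}(N^+p, K) = 0$, and Hecke equivariance is clear by construction.

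The main obstacle is justifying the $\mathcal{O}$-integral existence of $e$ on $S^{N^-}(N^+p, \mathscr{D}_k(\mathcal{O}))$, which is a non-finitely-generated $\mathcal{O}$-module. The standard route is to fix representatives for $\domain$ and view this space as a finite product of $\mathcal{O}$-lattices inside copies of $\mathscr{D}_k(E)$ cut out by the transformation law; Serre's theory of completely continuous $\mathcal{O}$-linear operators then ensures that $U_p^{n!}$ converges $\varpi$-adically to an idempotent, with the ``slope zero'' image being a projective $\mathcal{O}$-module of finite rank. This is precisely where the ordinary hypothesis is essential: for $0 < h < k-1$, inverting $U_p$ on the slope $h$ component requires denominators, so the analogous integral refinement fails and Theorem \ref{thm:control} must be stated only over $E$.
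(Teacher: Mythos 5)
Your strategy is legitimate but genuinely different from the paper's. The paper deduces the corollary as a byproduct of its proof of Theorem \ref{thm:control}: the overconvergent lift of a classical eigenform is built by successive approximation through the quotients $A^m\mathbf{D}_k(\mathcal{O})$, applying $\tfrac{1}{\alpha}U_p$ at each stage to an \emph{arbitrary function-theoretic} lift (Lemma \ref{lem:lemma_for_convergence}, Lemma \ref{lem:U_p-equivariance_of_lifting}, Proposition \ref{prop:lifting}), and integrality in the slope zero case is then automatic because $\alpha\in\mathcal{O}^\times$ and $L^\alpha_k(\mathcal{O})=L_k(\mathcal{O})$ (Remark \ref{rem:ordinary_integrality}.(2)). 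You instead run the classical Hida-idempotent argument on a short exact sequence of coefficient modules; this buys a cleaner formal structure (injectivity and surjectivity both fall out of $e\cdot S^{N^-}(N^+p,K)=0$), whereas the paper's route buys uniformity across all non-critical slopes with the $h=0$ integrality read off at the end.

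Two steps of your argument need repair. First, the exactness of the invariants functor rests on your assertion that the finite stabilizers of the double cosets in $\domain$ have order prime to $p$. This is not automatic: for $p=3$ (which the paper allows) the unit groups of Eichler orders in a definite quaternion algebra can contain elements of order $3$, so $\mathrm{H}^1(\Gamma_i,K)$ need not vanish and the integral surjectivity of $\mathrm{sp}_{k,*}$ onto $S^{N^-}_k(N^+p,\mathcal{O})$ is not free. The paper sidesteps this entirely by lifting $\phi$ merely as a function on $\domain$ and proving afterwards that $\tfrac{1}{\alpha}U_p$ of such a lift is again invariant; if you keep your route you must either impose that the stabilizers act trivially (or have prime-to-$p$ order) or substitute that function-level lifting for your exact sequence. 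Second, your filtration of $K=\ker(\mathrm{sp}_k)$ by order of vanishing of moments controls $U_p$ only on \emph{rigid} analytic distributions: on $\mathscr{D}_k(\mathcal{O})$ the moments do not separate points, so ``topological nilpotence of $U_p$ on $K$'' requires first invoking the comparison between $\mathscr{D}_k$ and $\mathbf{D}_k$ (the first isomorphism in Theorem \ref{thm:control}) before the moment estimate applies. Relatedly, integral surjectivity of $\mathrm{sp}_k$ onto $L_k(\mathcal{O})$ holds in the moment coordinates the paper uses (where $L_k(\mathcal{O})\simeq A^0\mathbf{D}_k(\mathcal{O})$), but not in the monomial basis when $p\leq k-2$, because of the binomial coefficients appearing in $\int_{\mathbb{Z}_p}(Y-zX)^{k-2}\,d\mu$. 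With these points addressed, your idempotent argument does give the corollary.
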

\begin{rem}
This is a quaternionic analogue of \cite[Lemma 6.4]{explicit-hida}. Note that this explicit integral normalization is \emph{not} compatible with the integral normalization for Hida theory. See \cite[Lemma 6.3]{explicit-hida} for the other integral normalization, which is more relevant to the integral Hida theory.
We call the other normalization by  the ``canonical" integral normalization. These two integral normalizations coincide if $k-1 < p$. See the proof of \cite[Theorem 6.8]{explicit-hida}.
\end{rem}

\subsection{A cohomological interpretation} \label{subsec:cohomological_interpretation}
We give a cohomological interpretation of the space of quaternionic forms adapting the approach of completed cohomology \`{a} la Emerton with the ``trivial" spectral sequence. See \cite[(3.2)]{emerton-interpolation}.
We expect that the explicit Gross points plays the role of the functional on the completed cohomology whose values are (an half of) anticyclotomic $p$-adic $L$-functions as the cycle $(0) - (i\infty) \in \mathrm{Div}^0(\mathbb{P}^1(\mathbb{Q}))$ plays the same role on the completed cohomology for $\mathrm{GL}_{2 /\mathbb{Q}}$ to produce cyclotomic $p$-adic $L$-functions. This idea comes from Emerton's comment when the author gave a talk at University of Chicago.

We recall the $\Gamma_0(p^\infty)$-variant of completed cohomology for quaternion algebras.
\begin{defn}[Completed cohomology for quaternion algebras]
$$\widetilde{\mathrm{H}}^0(N^+p^\infty)  := \varprojlim_{s} \varinjlim_{r} \mathrm{H}^0(B^\times \backslash \widehat{B}^\times / \widehat{R}^{\times}_{N^+p^r}, \mathcal{O}/\varpi^s\mathcal{O}) .$$
\end{defn}

Let $L_{k,\mathcal{O}} = L_k(\mathcal{O}) = \mathrm{Sym}^{k-2}(\mathcal{O}^2)$ equipped with a continuous action of an open subgroup $(R_r \otimes_{\mathbb{Z}}\mathbb{Z}_p)^\times$ of $B^\times_p \simeq \mathrm{GL}_2(\mathbb{Q}_p)$.
We define the associated $p$-adic local system $\mathscr{L}_{k,\mathcal{O}} = \mathscr{L}_k (\mathcal{O})$ on Hida variety $X^{N^-}_{N^+p^r}$ as follows:
$$\mathscr{L}_{k,\mathcal{O}} := \mathscr{L}_k (\mathcal{O}) := B^\times \backslash \left(  \left(  \widehat{B}^\times /\widehat{R}^{(p), \times}_{N^+p^r}    \right) \times L_{k,\mathcal{O}}  \right) / R^\times_{N^+p^r,p}$$ 
For a more detailed description of the local system, see \cite[Definition 2.2.3]{emerton-interpolation} and \cite[2.1.3]{emerton-icm}.
Then the \emph{trivial} Hochschild-Serre spectral sequence shows
$$\mathrm{Hom}_{R^{\times}_{N^+p^r,p}} (L^{\vee}_{k,\mathcal{O}}, \widetilde{\mathrm{H}}^0) \simeq \mathrm{H}^0(B^\times \backslash \widehat{B}^\times / \widehat{R}^{\times}_{N^+p^r}, \mathscr{L}_{k,\mathcal{O}})$$
where $L^{\vee}_{k,\mathcal{O}} = \mathrm{Hom}(L_{k,\mathcal{O}}, \mathcal{O})$ as in \cite[2.1.3]{emerton-icm}.
Dualizing the first term, we have
$$\mathrm{Hom}_{R^\times_{N^+p^r,p}} (L^{\vee}_{k,\mathcal{O}}, \widetilde{\mathrm{H}}^0)  \simeq \mathrm{Hom}_{R^{\times}_{N^+p^r,p}} (\widetilde{\mathrm{H}}_0, L_{k,\mathcal{O}}) = \mathrm{H}^0( R^\times_{N^+p^r,p}, \mathrm{Hom}_{\mathcal{O}} (\widetilde{\mathrm{H}}_0, L_{k,\mathcal{O}}) ) = S^{N^-}_k(N^+p^r, \mathcal{O})$$
where $\widetilde{\mathrm{H}}_0$ is the $\mathcal{O}$-dual of $\widetilde{\mathrm{H}}^0$.
The first isomorphism comes from the fact $L_{k,\mathcal{O}}$ is a torsion-free $\mathcal{O}$-module of finite rank and $\widetilde{\mathrm{H}}_0$ is also a torsion-free $\mathcal{O}$-module.

\subsection{Proof of control theorems} \label{sec:control_proof}
The goal of this subsection is to prove Theorem \ref{thm:control} and Corollary \ref{cor:control_ordinary}. We follow the strategy of M.~Greenberg \cite[$\S$4]{greenberg-israel} \emph{very closely}, which studies the case of modular symbols. Note that the proof is almost identical due to Greenberg's ``geometry free" approach. Another virtue of this approach is that it is easy to see the integral nature for the slope zero subspace (Corollary \ref{cor:control_ordinary}). See also \cite[Proposition 4]{buzzard-families} for another proof. Buzzard's approach seems more adaptable with the setting of eigenvarieties as in \cite{buzzard-eigenvarieties}.

\subsubsection{Preliminaries on distribution modules}
Let $\mathbf{D}_k(\mathcal{O}) = \lbrace \mu \in \mathbf{D}_k : \mu(z^n) \in \mathcal{O} \textrm{ for all } n \geq 0  \rbrace$ and it is known that $\mathbf{D}_k(\mathcal{O})$ is a $\Sigma_0(p)$-stable submodule of $\mathbf{D}_k$.

\begin{lem}[{\cite[Lemma 1]{greenberg-israel}}]
Let $\mu \in \mathbf{D}_k$. Then the moments $\mu(z^n)$ of $\mu$ are uniformly bounded. Consequently, we have
$$\mathbf{D}_k \simeq \mathbf{D}_k(\mathcal{O}) \otimes_{\mathcal{O}} E .$$
\end{lem}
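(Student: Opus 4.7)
The plan is to identify $\mathbf{D}_k$ as the continuous dual of the $p$-adic Banach space $\mathcal{A}(\Z_p)$ of rigid analytic functions on $\Z_p$, and read off both claims from this Banach-space structure. Recall that $\mathcal{A}(\Z_p)$ consists of power series $f(z) = \sum_{n \geq 0} a_n z^n$ with $a_n \in E$ and $|a_n| \to 0$, equipped with the Gauss norm $|f| := \sup_n |a_n|$. The monomials $\{z^n\}_{n \geq 0}$ form an orthonormal Schauder basis, so every element is the uniform limit of its truncations.

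Given $\mu \in \mathbf{D}_k$, continuity of $\mu$ with respect to the Gauss norm provides an operator norm $\|\mu\|_{\mathrm{op}} = \sup_{|f|\leq 1} |\mu(f)| < \infty$. Since $|z^n| = 1$ in the Gauss norm for every $n \geq 0$, we immediately obtain the uniform bound
\[
|\mu(z^n)| \leq \|\mu\|_{\mathrm{op}} \qquad \text{for all } n \geq 0,
\]
which is the first assertion. Conversely, if the moments of a linear functional $\mu$ are uniformly bounded by some $M$, then for $f = \sum a_n z^n \in \mathcal{A}(\Z_p)$ the series $\sum a_n \mu(z^n)$ converges and satisfies $|\mu(f)| \leq M \cdot |f|$, so boundedness of moments is actually equivalent to being a member of $\mathbf{D}_k$.

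For the displayed isomorphism, fix $\mu \in \mathbf{D}_k$ and set $M := \sup_n |\mu(z^n)| < \infty$. Choose $N \geq 0$ with $|\varpi|^N \cdot M \leq 1$. Then $(\varpi^N \mu)(z^n) \in \mathcal{O}$ for every $n$, so $\varpi^N \mu \in \mathbf{D}_k(\mathcal{O})$ by definition, giving $\mu = \varpi^{-N}(\varpi^N \mu) \in \mathbf{D}_k(\mathcal{O}) \otimes_{\mathcal{O}} E$. The inclusion $\mathbf{D}_k(\mathcal{O}) \otimes_{\mathcal{O}} E \hookrightarrow \mathbf{D}_k$ is tautological since distributions with $\mathcal{O}$-valued moments certainly have bounded moments, so the two spaces coincide.

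The only subtle point—and the step where care is needed—is the identification of $\mathbf{D}_k$ with the continuous dual of $\mathcal{A}(\Z_p)$, together with the verification that the ``moment'' characterization indeed singles out all of $\mathbf{D}_k$ (not just a proper subspace). This rests on the standard fact that the orthonormality of $\{z^n\}$ allows a continuous functional to be reconstructed from its moments by the formula $\mu(f) = \sum a_n \mu(z^n)$, with convergence guaranteed by $|a_n| \to 0$. Once this is granted, both conclusions are immediate and do not interact with the semigroup action of $\Sigma_0(p)$.
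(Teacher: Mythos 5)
Your proof is correct and is essentially the argument behind the cited result: the paper simply quotes \cite[Lemma 1]{greenberg-israel}, whose proof is exactly the identification of $\mathbf{D}_k$ with the continuous dual of the Tate algebra, where orthonormality of $\{z^n\}$ gives $\|\mu\| = \sup_n |\mu(z^n)|$ and the scaling by $\varpi^N$ yields the isomorphism $\mathbf{D}_k \simeq \mathbf{D}_k(\mathcal{O}) \otimes_{\mathcal{O}} E$. No issues.
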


Define the filtration of $\mathbf{D}_k(\mathcal{O})$ as follows:
\begin{align*}
\mathrm{Fil}^0 \mathbf{D}_k(\mathcal{O}) & := \lbrace \mu \in \mathbf{D}_k(\mathcal{O}) : \mu(z^i) = 0 \textrm{ for all } i =0, \cdots k-2 \rbrace \\
\mathrm{Fil}^m \mathbf{D}_k(\mathcal{O}) & := \lbrace \mu \in \mathrm{Fil}^0 \mathbf{D}_k(\mathcal{O}) : \mu(z^{k-2+j} ) \in \varpi^{m-j+1} \mathcal{O} \textrm{ for all } j =1, \cdots m \rbrace
\end{align*}
for $m \geq 1$.

\begin{lem}[{\cite[Lemma 2]{greenberg-israel}}]
For each $m \geq 0$, the submodule $\mathrm{Fil}^m \mathbf{D}_k(\mathcal{O})$ is $\Sigma_0(p)$-stable.
\end{lem}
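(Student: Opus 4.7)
The plan is to fix $\mu \in \mathrm{Fil}^m \mathbf{D}_k(\mathcal{O})$ and $r = \left( \begin{smallmatrix} a & b \\ c & d \end{smallmatrix} \right) \in \Sigma_0(p)$, and verify the moment conditions defining $\mathrm{Fil}^m \mathbf{D}_k(\mathcal{O})$ for $r \circ \mu$ directly from the formula
\[
(r \circ \mu)(z^n) \; = \; \mu\bigl( (z^n \mid_k r)(z) \bigr) \; = \; \mu\bigl( (cz+d)^{k-2-n} (az+b)^n \bigr).
\]
I would split the analysis into two cases according to the size of $n$.

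When $0 \leq n \leq k-2$, the exponent $k-2-n$ is non-negative, so $(cz+d)^{k-2-n}(az+b)^n$ is an honest polynomial in $z$ of degree at most $k-2$. Since $\mu \in \mathrm{Fil}^0 \mathbf{D}_k(\mathcal{O})$ annihilates $1, z, \ldots, z^{k-2}$, it annihilates every such polynomial, so $(r \circ \mu)(z^n) = 0$ for $n = 0, \ldots, k-2$, confirming that $r \circ \mu \in \mathrm{Fil}^0$. Next, for $n = k-2+j$ with $1 \leq j \leq m$, I would expand $(cz+d)^{-j}$ as the convergent series $d^{-j} \sum_{i \geq 0} \binom{-j}{i} (c/d)^{i} z^{i}$, which is $\mathcal{O}$-integrally a power series in $z$ because $c \in p\mathbb{Z}_p$ and $d \in \mathbb{Z}_p^\times$, and multiply by the polynomial $(az+b)^{k-2+j} = \sum_{\ell=0}^{k-2+j} \binom{k-2+j}{\ell} a^{\ell} b^{k-2+j-\ell} z^{\ell}$. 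This yields a termwise expansion
\[
(z^{k-2+j} \mid_k r)(z) \; = \; \sum_{i \geq 0} \sum_{\ell = 0}^{k-2+j} \binom{-j}{i} \binom{k-2+j}{\ell} c^{i} d^{-j-i} a^{\ell} b^{k-2+j-\ell} \cdot z^{i+\ell},
\]
whose coefficients have $\varpi$-adic valuation at least $i$ (from $c^{i}$, using $v_p(d)=0$ and the integrality of the binomials).

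Applying $\mu$ termwise and using $\mu \in \mathrm{Fil}^0$ to discard terms with $i+\ell \leq k-2$, I would set $s := i+\ell - (k-2) \geq 1$ and use the bound $v_\varpi(\mu(z^{k-2+s})) \geq \max(m-s+1,0)$ coming from $\mu \in \mathrm{Fil}^m \mathbf{D}_k(\mathcal{O})$. A short case check then gives the required inequality $v_\varpi\bigl((r\circ \mu)(z^{k-2+j})\bigr) \geq m - j + 1$: for $1 \leq s \leq m$ the term has valuation at least $i + (m-s+1) = m - \ell + (k-1) \geq m - j + 1$ (since $\ell \leq k-2+j$), and for $s > m$ the term has valuation at least $i = s - \ell + (k-2) > m - j$, hence $\geq m - j + 1$. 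This shows $r \circ \mu \in \mathrm{Fil}^m \mathbf{D}_k(\mathcal{O})$.

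The real work is this indexing bookkeeping; nothing deeper enters. The main thing to watch is that the power series expansion of $(cz+d)^{-j}$ is only valid because of the defining conditions of $\Sigma_0(p)$ ($c \in p\mathbb{Z}_p$, $d \in \mathbb{Z}_p^\times$), and that the gain of valuation by $i$ from $c^i$ is precisely what compensates for the loss incurred when $s > m$ forces us to use only the trivial bound $v_\varpi(\mu(z^{k-2+s})) \geq 0$.
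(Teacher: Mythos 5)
Your argument is correct and is essentially the standard computation behind the cited result (the paper itself gives no proof, deferring to Greenberg's Lemma 2, whose proof is exactly this moment-by-moment expansion of $(cz+d)^{k-2-n}(az+b)^n$ using $c\in p\mathbb{Z}_p$, $d\in\mathbb{Z}_p^\times$ and the filtration bounds). The case analysis and the valuation bookkeeping all check out, with the only implicit point being that termwise application of $\mu$ to the convergent series is justified by the uniform boundedness of the moments of $\mu\in\mathbf{D}_k(\mathcal{O})$.
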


Consider the quotients
$$A^m \mathbf{D}_k(\mathcal{O}) := \mathbf{D}_k(\mathcal{O}) / \mathrm{Fil}^m \mathbf{D}_k(\mathcal{O})$$
for all $m \geq 0$.
We call $A^m \mathbf{D}_k(\mathcal{O})$ the \textbf{$m$-th approximation to the module $\mathbf{D}_k(\mathcal{O})$}.
Note that $L_k(\mathcal{O}) \simeq A^0\mathbf{D}_k(\mathcal{O})$.

\begin{lem}[{\cite[Lemma 8]{greenberg-israel}}] \label{lem:moments_invert_p}
$$S^{N^-}(N^+p, \mathbf{D}_k) \simeq S^{N^-}(N^+p, \mathbf{D}_k(\mathcal{O})) \otimes_{\mathcal{O}} E$$
\end{lem}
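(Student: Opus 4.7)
The plan is to exploit the fact that the space of quaternionic forms on a definite quaternion algebra is controlled by a finite double coset space, so the question reduces to whether taking finite-group invariants commutes with inverting $p$. Recall that $\mathbf{D}_k \simeq \mathbf{D}_k(\mathcal{O}) \otimes_{\mathcal{O}} E$ by the previous lemma, and that $E$ is flat (indeed a localization) over $\mathcal{O}$.

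First I would set up the standard finite presentation of quaternionic forms. Since $B$ is definite, the Shimura set $B^\times \backslash \widehat{B}^\times / \widehat{R}^\times_{N^+p}$ is finite; fix representatives $g_1, \dots, g_h$. For each $i$, let
\[
\Gamma_i \ := \ \bigl(g_i^{-1} B^\times g_i \cap \widehat{R}^\times_{N^+p}\bigr)_p \ \subseteq \ R^\times_{N^+p,\,p},
\]
i.e.\ the image at $p$ of the stabilizer. Since $B$ is definite, the global units involved are contained in a finite group, so each $\Gamma_i$ is finite. A form $\Phi \in S^{N^-}(N^+p, D_k)$ is completely determined by the tuple $(\Phi(g_1), \dots, \Phi(g_h))$, and the transformation property $\Phi(\alpha b u) = \widetilde{\rho}_k(u_p^{-1}) \circ \Phi(b)$ translates into the requirement that $\Phi(g_i) \in D_k^{\Gamma_i}$ with respect to the action $\widetilde{\rho}_k$. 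Thus we obtain a natural isomorphism
\[
S^{N^-}(N^+p, D_k) \ \simeq \ \bigoplus_{i=1}^h D_k^{\Gamma_i}
\]
for $D_k = \mathbf{D}_k$ or $\mathbf{D}_k(\mathcal{O})$.

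Next I would note that invariants under a finite group commute with flat base change. Concretely, for each $i$ there is a short exact sequence
\[
0 \to \mathbf{D}_k(\mathcal{O})^{\Gamma_i} \to \mathbf{D}_k(\mathcal{O}) \xrightarrow{\ x\mapsto (x-\gamma x)_{\gamma}\ } \prod_{\gamma \in \Gamma_i} \mathbf{D}_k(\mathcal{O}),
\]
and tensoring with the flat $\mathcal{O}$-module $E$ preserves exactness (and commutes with the finite product since $\Gamma_i$ is finite). Using $\mathbf{D}_k(\mathcal{O}) \otimes_{\mathcal{O}} E \simeq \mathbf{D}_k$ from the previous lemma, this gives
\[
\mathbf{D}_k(\mathcal{O})^{\Gamma_i} \otimes_{\mathcal{O}} E \ \simeq \ \bigl(\mathbf{D}_k(\mathcal{O}) \otimes_{\mathcal{O}} E\bigr)^{\Gamma_i} \ \simeq \ \mathbf{D}_k^{\Gamma_i}.
\]
Summing over $i$ and applying the presentation from the previous step yields
\[
S^{N^-}(N^+p, \mathbf{D}_k(\mathcal{O})) \otimes_{\mathcal{O}} E \ \simeq \ \bigoplus_{i=1}^h \mathbf{D}_k(\mathcal{O})^{\Gamma_i} \otimes_{\mathcal{O}} E \ \simeq \ \bigoplus_{i=1}^h \mathbf{D}_k^{\Gamma_i} \ \simeq \ S^{N^-}(N^+p, \mathbf{D}_k),
\]
and the isomorphism is evidently induced by the inclusion $\mathbf{D}_k(\mathcal{O}) \hookrightarrow \mathbf{D}_k$, completing the proof.

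The only step requiring any real care is verifying that the stabilizer groups $\Gamma_i$ are genuinely finite and that the transformation property is equivalent to the pointwise invariance condition on the $h$-tuple; both are standard consequences of the definiteness of $B$ and the fact that the transformation is controlled by the $p$-component of $u$. Once this finite presentation is in place, the claim is a formal consequence of flatness of $E/\mathcal{O}$, exactly as in \cite{greenberg-israel}.
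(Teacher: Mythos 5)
Your proof is correct and is essentially the argument the paper intends by its citation of Greenberg's Lemma 8: the finiteness of the class set $B^\times \backslash \widehat{B}^\times / \widehat{R}^\times_{N^+p}$ (playing the role of finite generation of $\mathrm{Div}^0(\mathbb{P}^1(\mathbb{Q}))$ in the modular-symbol setting) together with the $\Sigma_0(p)$-stability of $\mathbf{D}_k(\mathcal{O})$ reduces everything to the identity $\mathbf{D}_k \simeq \mathbf{D}_k(\mathcal{O}) \otimes_{\mathcal{O}} E$. Your packaging of the surjectivity via flat base change for finite-group invariants is just a slightly more formal version of the ``clear denominators by a power of $p$ using bounded moments'' step, so the two arguments coincide in substance.
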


Let $m \in L_k$ and let $\mu$ be the unique preimage of $m$ under $\mathrm{sp}_k$ satisfying $\mu(z^j) = 0$ for $j  > k-2$. We define the \textbf{$j$-th moment of $m$} by $m(z^j) := \mu(z^j)$.

The specialization map $\mathrm{sp}_k$ naturally induces the Hecke-equivariant map between the spaces of quaternionic forms
$$\mathrm{sp}_{k,*} : S^{N^-}(N^+p, \mathbf{D}_k) \to S^{N^-}_k(N^+p, E)$$

Consider two natural projections
\[
\xymatrix{
\mathrm{sp}^{m}_k : \mathbf{D}_k(\mathcal{O}) \to A^m \mathbf{D}_k(\mathcal{O}) & \mathrm{sp}^{m+1,m}_k : A^{m+1} \mathbf{D}_k(\mathcal{O}) \to A^m \mathbf{D}_k(\mathcal{O}) .
}
\]
Then the induced maps $\mathrm{sp}^{m}_{k,*}$ and $\mathrm{sp}^{m+1,m}_{k,*}$ on the spaces of quaternionic forms are also $U_p$-equivariant.

\subsubsection{Lifting and control theorems: Proof of Theorem \ref{thm:control} and Corollary \ref{cor:control_ordinary}} \label{subsubsec:proof_control_thm}
Most argument does not concern domain, so the proofs are identical for the case of modular symbols except the convention of group actions.

Let $\alpha = \alpha_p(f_k)$ and $h = \mathrm{ord}_p(\alpha)$ for convenience.
Set
$$L^{\alpha}_k (\mathcal{O}) := \lbrace m \in L_k(\mathcal{O}) : m(z^i) \in \varpi^{h - e\cdot i}\mathcal{O}, 0 \leq i \leq \lfloor h/e \rfloor\rbrace,$$
where $e$ is the ramification index of $E / \mathbb{Q}_p$ and $\lfloor \cdot \rfloor$ is the floor function.
\begin{rem} $ $ \label{rem:ordinary_integrality}
\begin{enumerate}
\item It is known that $L^{\alpha}_k (\mathcal{O})$ is $\Sigma_0(p)$-stable.
\item $L^{\alpha}_k (\mathcal{O}) = L_k (\mathcal{O})$ if $h=0$, i.e.~slope zero.
\end{enumerate}
\end{rem}
Note that $L^{\alpha}_k (\mathcal{O}) \otimes_\mathcal{O} E = L_k (E)$ for any $\alpha$.

Let $\phi^0 \in S^{N^-}_{2} (N^+p, E)$ be an eigenform with $U_p$-eigenvalue $\alpha$ in $E$ of slope strictly less than $k-1$. Assume that $\phi^0$ is normalized, i.e.~$\phi^0 \in S^{N^-}_2 (N^+p, \mathcal{O})$.
\begin{lem}[{\cite[Lemma 11]{greenberg-israel}}] \label{lem:lemma_for_convergence}
\begin{enumerate}
\item Let $\mu \in \mathbf{D}_k(\mathcal{O})$ be such that $\mathrm{sp}_k (\mu) \in L^{\alpha}_k (\mathcal{O})$. Then
$$\left( \begin{smallmatrix} p & a \\ 0 & 1 \end{smallmatrix} \right) \circ \mu \in \alpha \mathbf{D}_k(\mathcal{O}).$$
\item Let $\mu \in \mathrm{Fil}^m \mathbf{D}_k(\mathcal{O})$. Then
$$\left( \begin{smallmatrix} p & a \\ 0 & 1 \end{smallmatrix} \right) \circ \mu \in \alpha \mathrm{Fil}^{m+1} \mathbf{D}_k(\mathcal{O}).$$
\end{enumerate}
\end{lem}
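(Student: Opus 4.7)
The plan is to reduce both parts to direct moment estimates. Using the action defined in Equation~(\ref{eqn:action_on_functions}), for $r = \left(\begin{smallmatrix} p & a \\ 0 & 1 \end{smallmatrix}\right) \in \Sigma_0(p)$ we have $(f \mid_k r)(z) = f(pz+a)$, hence by dualising,
\[
(r \circ \mu)(z^n) \;=\; \mu\bigl((pz+a)^n\bigr) \;=\; \sum_{j=0}^{n} \binom{n}{j}\, p^{j}\, a^{n-j}\, \mu(z^{j}).
\]
Writing $v_E$ for the valuation on $E$ normalized by $v_E(\varpi)=1$ (so $v_E(\alpha)=eh$), both conclusions become valuation estimates on this sum, term by term.

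For part (1), I would split the sum at the threshold $j_{0}$ where the $p^{j}$ factor alone achieves $v_E(p^{j}) \geq eh$. For $j \geq j_{0}$, the ambient integrality $\mu(z^{j}) \in \mathcal{O}$ already pushes each such term into $\alpha\mathcal{O}$. For $j < j_{0}$, the hypothesis $\mathrm{sp}_k(\mu) \in L^{\alpha}_k(\mathcal{O})$ is engineered precisely so that $\mu(z^{j})$ carries the complementary $\varpi$-divisibility: the bound on $v_E(\mu(z^{j}))$ provided by membership in $L^{\alpha}_k(\mathcal{O})$, added to $v_E(p^{j})=ej$, again sums to at least $eh$. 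Thus every moment of $r \circ \mu$ lies in $\alpha\mathcal{O}$, giving $r \circ \mu \in \alpha \mathbf{D}_k(\mathcal{O})$.

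For part (2), the condition $\mu \in \mathrm{Fil}^{0}$ kills every term with $j \leq k-2$, so $(r \circ \mu)(z^{n})=0$ for $n \leq k-2$; in particular $\nu := \alpha^{-1}(r \circ \mu)$ automatically lies in $\mathrm{Fil}^{0}$. For $n = k-2+j'$ with $1 \leq j' \leq m+1$, only the summands indexed by $j = k-2+i$, $1 \leq i \leq j'$, survive. I would estimate these using $v_E(\mu(z^{k-2+i})) \geq m+1-i$ coming from $\mu \in \mathrm{Fil}^{m}$ for $1 \leq i \leq m$, and the weaker integrality $\mu(z^{k-1+m}) \in \mathcal{O}$ in the edge case $i=m+1$. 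The required bound $v_E((r \circ \mu)(z^{k-2+j'})) \geq eh + (m+2-j')$ then reduces to $e(k-2-h) + (e-1)i + (j'-1) \geq 0$, together with a parallel inequality in the edge case, and both hold because the non-critical slope hypothesis $h < k-1$ forces $e(k-2-h) \geq 1-e$, which is compensated by $(e-1)i \geq e-1$ as $i\geq 1$. An analogous estimate handles $n > k-1+m$ and shows the higher moments land in $\alpha\mathcal{O}$, yielding $r \circ \mu \in \alpha \mathrm{Fil}^{m+1} \mathbf{D}_k(\mathcal{O})$.

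The whole argument is a mechanical translation of Greenberg's ``geometry-free'' proof in \cite{greenberg-israel} for modular symbols, since only the formula for the semigroup action on $\mathbf{D}_k(\mathcal{O})$ and the definition of the filtration are used, and both are common to the two settings. The main obstacle is not conceptual: it is the careful bookkeeping of the ramification index $e$ entering through $v_E(\alpha)=eh$, which is precisely what makes $h < k-1$ the right threshold at which all the inequalities in part~(2) become tight but still hold.
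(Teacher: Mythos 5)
Your proof is correct and is essentially the paper's argument: the paper gives no independent proof of this lemma but defers entirely to Greenberg's Lemma 11, whose content is exactly the term-by-term valuation estimate on $(r\circ\mu)(z^n)=\sum_{j}\binom{n}{j}p^{j}a^{n-j}\mu(z^{j})$ that you carry out, with the non-critical-slope inequality $e(k-2-h)\ge 1-e$ doing the work in part (2). The only point to double-check is the exponent normalization in the paper's definition of $L^{\alpha}_k(\mathcal{O})$ (as literally written, $\varpi^{h-ei}$ rather than $\varpi^{e(h-i)}$), but your computation correctly identifies the divisibility $v_E(\mu(z^{j}))\ge e(h-j)$ that the definition is meant to encode.
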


Assume the existence of a lift $\phi^m$ of $\phi^0$ to $S^{N^-} (N^+p, A^m\mathbf{D}_k(\mathcal{O}))$ such that $\phi^m$ is also a $U_p$-eigenform with eigenvalue $\alpha$. Choose an arbitrary lift $\Phi$ of $\phi^m$ to an element of $\mathrm{Hom} (B^\times \backslash \widehat{B}^\times / \widehat{R}^{(p), \times}, \mathbf{D}_k(\mathcal{O}))$.
Since $\Phi$ is also a lift of $\phi^0$, Lemma \ref{lem:lemma_for_convergence}.(1) implies that
$$\left( \frac{1}{\alpha} \cdot U_p \right) \Phi \in \mathrm{Hom} (\mathbb{Z}[B^\times \backslash \widehat{B}^\times / \widehat{R}^{(p), \times}], \mathbf{D}_k(\mathcal{O})).$$
Now we define the one step lifting
$\phi^{m+1}$ by
$$\phi^{m+1} := \mathrm{sp}^{m+1}_{k,*} ( \left( \frac{1}{\alpha} \cdot U_p \right) \Phi ) \in \mathrm{Hom} (\mathbb{Z}[B^\times \backslash \widehat{B}^\times / \widehat{R}^{(p), \times}], A^{m+1}\mathbf{D}_k(\mathcal{O})) .$$
The $U_p$-equivariance of the projection maps together with the relation
$\mathrm{sp}^m_k = \mathrm{sp}^{m+1, m}_k \circ \mathrm{sp}^{m+1}_k$
implies that
$\phi^{m} = \mathrm{sp}^{m+1, m}_{k,*} ( \phi^{m+1} ) $.
\begin{lem}[{\cite[Claim 1]{greenberg-israel}}] \label{lem:independent}
The lifted form $\phi^{m+1} \in \mathrm{Hom} (\mathbb{Z}[B^\times \backslash \widehat{B}^\times / \widehat{R}^{(p), \times}], A^{m+1}\mathbf{D}_k(\mathcal{O}))$ is independent of the choice of lift $\Phi$ used in the construction.
\end{lem}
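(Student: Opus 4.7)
My plan is to reduce everything to a filtration chase using Lemma \ref{lem:lemma_for_convergence}(2). Let $\Phi_1, \Phi_2$ be two lifts of $\phi^m$ in $\mathrm{Hom}(B^\times \backslash \widehat{B}^\times / \widehat{R}^{(p), \times}, \mathbf{D}_k(\mathcal{O}))$ used to build $\phi^{m+1}$ by the prescribed formula. Since $\mathrm{sp}^m_{k,*}(\Phi_1) = \mathrm{sp}^m_{k,*}(\Phi_2) = \phi^m$ and the kernel of $\mathrm{sp}^m_k : \mathbf{D}_k(\mathcal{O}) \to A^m\mathbf{D}_k(\mathcal{O})$ is by definition $\mathrm{Fil}^m \mathbf{D}_k(\mathcal{O})$, the difference $\Psi := \Phi_1 - \Phi_2$ takes values pointwise in $\mathrm{Fil}^m \mathbf{D}_k(\mathcal{O})$.

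The next step is to push $\Psi$ through the $U_p$-operator. Recalling that $U_p$ is a finite sum of actions by matrices of the form $\left(\begin{smallmatrix} p & a \\ 0 & 1 \end{smallmatrix}\right)$ with $a = 0,\ldots,p-1$ (each such matrix preceded by a translation on the source, which is irrelevant here since we only track values), Lemma \ref{lem:lemma_for_convergence}(2) tells us that each summand sends $\mathrm{Fil}^m \mathbf{D}_k(\mathcal{O})$ into $\alpha \cdot \mathrm{Fil}^{m+1}\mathbf{D}_k(\mathcal{O})$. Consequently $U_p \Psi$ takes values in $\alpha \cdot \mathrm{Fil}^{m+1}\mathbf{D}_k(\mathcal{O})$, and therefore $\left(\tfrac{1}{\alpha} \cdot U_p\right)\Psi$ takes values in $\mathrm{Fil}^{m+1}\mathbf{D}_k(\mathcal{O})$, which is $\mathcal{O}$-integral rather than merely $E$-valued (this is where we use that $\alpha$ is exactly the eigenvalue and $h < k-1$ is irrelevant for the division step).

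Finally, apply the specialization $\mathrm{sp}^{m+1}_{k,*} : \mathbf{D}_k(\mathcal{O}) \to A^{m+1}\mathbf{D}_k(\mathcal{O})$, whose kernel is precisely $\mathrm{Fil}^{m+1}\mathbf{D}_k(\mathcal{O})$. We conclude
\[
\mathrm{sp}^{m+1}_{k,*}\!\left(\tfrac{1}{\alpha}\cdot U_p \cdot \Phi_1\right) - \mathrm{sp}^{m+1}_{k,*}\!\left(\tfrac{1}{\alpha}\cdot U_p \cdot \Phi_2\right) = \mathrm{sp}^{m+1}_{k,*}\!\left(\tfrac{1}{\alpha}\cdot U_p \cdot \Psi\right) = 0,
\]
so the definition of $\phi^{m+1}$ is independent of $\Phi$. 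The only real subtlety — and the step I would check most carefully — is the $U_p$-calculation: one must be sure that the action of $\left(\begin{smallmatrix} p & a \\ 0 & 1\end{smallmatrix}\right)$ in our convention on $\mathbf{D}_k(\mathcal{O})$ is exactly the one covered by Lemma \ref{lem:lemma_for_convergence}(2) (the adjugate convention of $\S$\ref{sec:coefficients}), and that the translations on the source double coset space do not interfere with the pointwise filtration bound. Both facts follow from the construction in $\S$\ref{subsec:distributions}, so the argument is essentially the same as in Greenberg's modular symbol case.
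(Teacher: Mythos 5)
Your argument is correct and is exactly the one the paper intends: the paper's proof is the single line ``immediately follows from Lemma \ref{lem:lemma_for_convergence}.(2)'', and your write-up simply spells out that one-line reduction (difference of lifts lands in $\mathrm{Fil}^m$, the $U_p$-summands push it into $\alpha\,\mathrm{Fil}^{m+1}$, and $\mathrm{sp}^{m+1}_{k,*}$ kills $\mathrm{Fil}^{m+1}$). Your closing caveats about the adjugate convention and the harmlessness of the source translations are also the right things to check and they do hold here.
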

\begin{proof}
The claim immediately follows from Lemma \ref{lem:lemma_for_convergence}.(2).
\end{proof}
\begin{rem}
We do not need to have an analogue of \cite[Claim 2]{greenberg-israel} since it concerns the special property of the fundamental domain of modular symbols.
\end{rem}
The following lemma says that the lift is also $R^\times_{N^+p,p}$-equivariant.
\begin{lem}[{\cite[Claim 3]{greenberg-israel}}]   \label{lem:U_p-equivariance_of_lifting}
The lifted form $\phi^{m+1}$ is $R^\times_{N^+p,p}$-invariant. In other words,
$$\gamma \circ \phi^{m+1} = \phi^{m+1} .$$
where $\gamma \in R^\times_{N^+p,p}$.
\end{lem}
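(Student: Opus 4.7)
The plan is to deduce the desired equivariance from Lemma \ref{lem:independent} (independence of the lift), following Greenberg's strategy. Fix $\gamma \in R^\times_{N^+p,p}$ and let $\Phi \in \mathrm{Hom}(\mathbb{Z}[B^\times \backslash \widehat{B}^\times / \widehat{R}^{(p), \times}], \mathbf{D}_k(\mathcal{O}))$ be the arbitrary lift of $\phi^m$ used in constructing $\phi^{m+1}$. I would introduce the twisted lift
\[
\Phi^{(\gamma)}(b) := \widetilde{\rho}_k(\gamma) \cdot \Phi(b\gamma),
\]
which visibly encodes the $\gamma$-translate of $\Phi$. The proof then amounts to verifying two things: that $\Phi^{(\gamma)}$ is again a lift of $\phi^m$, and that running the one-step lifting procedure on $\Phi^{(\gamma)}$ reproduces $\gamma \circ \phi^{m+1}$. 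Given both, Lemma \ref{lem:independent} forces $\gamma \circ \phi^{m+1} = \phi^{m+1}$.

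For the first point, I would apply $\mathrm{sp}^m_k$: by its $\Sigma_0(p)$-equivariance together with the $R^\times_{N^+p,p}$-invariance of $\phi^m$ (which holds inductively on $m$, with the base case $\phi^0 \in S^{N^-}_k(N^+p, E)$ being built into the definition of a classical quaternionic form), one has $\mathrm{sp}^m_k(\Phi^{(\gamma)}(b)) = \rho_k(\gamma) \phi^m(b\gamma) = \phi^m(b)$, so $\Phi^{(\gamma)}$ is indeed a lift of $\phi^m$.

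For the second point, the key ingredient is the normalization identity
\[
\left( \begin{smallmatrix} p & a \\ 0 & 1 \end{smallmatrix} \right) \gamma = \gamma_a \left( \begin{smallmatrix} p & a' \\ 0 & 1 \end{smallmatrix} \right), \qquad \gamma_a \in R^\times_{N^+p,p},
\]
under which $\gamma$ permutes the standard $U_p$-coset representatives up to left multiplication by elements of $\Gamma_0(p\Z_p)$. Substituting this identity into $U_p \Phi^{(\gamma)}$, using the $\Sigma_0(p)$-equivariance of the $\widetilde{\rho}_k$-action, and then invoking Lemma \ref{lem:lemma_for_convergence}.(2) to absorb the discrepancy caused by the factors $\gamma_a$ (whose contribution lies in $\alpha \cdot \mathrm{Fil}^{m+1} \mathbf{D}_k(\mathcal{O})$ and therefore vanishes in $A^{m+1} \mathbf{D}_k(\mathcal{O})$ after dividing by $\alpha$), one obtains
\[
\mathrm{sp}^{m+1}_{k,*} \bigl( (1/\alpha) U_p \Phi^{(\gamma)} \bigr)(b) = \widetilde{\rho}_k(\gamma) \cdot \phi^{m+1}(b\gamma) = (\gamma \circ \phi^{m+1})(b).
\]

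Combining these two steps with Lemma \ref{lem:independent} yields $\gamma \circ \phi^{m+1} = \phi^{m+1}$. The main obstacle is the second step: one must carefully verify that the error introduced by the non-equivariance of $\Phi$ itself is swallowed by the filtration jump precisely when divided by $\alpha$, which is exactly the absorption mechanism already employed in the proof of Lemma \ref{lem:independent} and which makes the whole overconvergent lifting argument work in the first place.
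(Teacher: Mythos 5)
Your argument is exactly the ``standard computation with help of Lemma \ref{lem:independent}'' that the paper (following Greenberg's Claim 3) invokes without writing out: twist the auxiliary lift $\Phi$ by $\gamma$, check it still lifts $\phi^m$ using the invariance of $\phi^m$, and use the coset identity $\left( \begin{smallmatrix} p & a \\ 0 & 1 \end{smallmatrix} \right) \gamma = \gamma_a \left( \begin{smallmatrix} p & a' \\ 0 & 1 \end{smallmatrix} \right)$ together with Lemma \ref{lem:lemma_for_convergence}.(2) to see that the one-step lifting of the twist is $\gamma \circ \phi^{m+1}$, whence Lemma \ref{lem:independent} gives the invariance. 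This matches the paper's (cited) proof, and you have correctly identified where the filtration-absorption mechanism enters.
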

\begin{proof}
It is a standard computation with help of Lemma \ref{lem:independent}.
\end{proof}
%
The following lemma directly follows from the $\Sigma_0(p)$-equivariance of $\mathrm{sp}^{m+1}_{k,*}$.
\begin{lem} \label{lem:eigenvalues}
$\phi^{m+1}$ is a $U_p$-eigenform with eigenvalue $\alpha$.
\end{lem}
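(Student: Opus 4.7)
My plan is to verify the eigenvalue equation $U_p\phi^{m+1}=\alpha\phi^{m+1}$ directly from the construction, exploiting the $U_p$-equivariance of the specialization and the independence lemma just proved.

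First I would expand the definition: $\phi^{m+1}=\mathrm{sp}^{m+1}_{k,*}\bigl(\tfrac{1}{\alpha}U_p\Phi\bigr)$ for an arbitrary lift $\Phi\in\mathrm{Hom}(B^\times\backslash\widehat{B}^\times/\widehat{R}^{(p),\times},\mathbf{D}_k(\mathcal{O}))$ of $\phi^m$. Applying $U_p$ and pushing it through the specialization using the fact that $\mathrm{sp}^{m+1}_{k,*}$ is $U_p$-equivariant (which is an instance of its $\Sigma_0(p)$-equivariance, since $U_p$ is built from matrices $\left(\begin{smallmatrix}p & a\\ 0 & 1\end{smallmatrix}\right)\in\Sigma_0(p)$), I get
\[
U_p\phi^{m+1}\;=\;\mathrm{sp}^{m+1}_{k,*}\!\left(\tfrac{1}{\alpha}\,U_p^{\,2}\,\Phi\right)\;=\;\alpha\cdot\mathrm{sp}^{m+1}_{k,*}\!\left(\tfrac{1}{\alpha}\,U_p\,\Phi'\right),
\]
where $\Phi':=\tfrac{1}{\alpha}U_p\Phi$.

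Next I would verify that $\Phi'$ itself is a valid lift of $\phi^m$ to the module $\mathbf{D}_k(\mathcal{O})$-valued functions on $B^\times\backslash\widehat{B}^\times/\widehat{R}^{(p),\times}$. The integrality follows from Lemma \ref{lem:lemma_for_convergence}(1), which is exactly the reason why $\tfrac{1}{\alpha}U_p\Phi$ landed in $\mathbf{D}_k(\mathcal{O})$-valued functions in the first place. The lifting property is immediate: using the factorization $\mathrm{sp}^m_k=\mathrm{sp}^{m+1,m}_k\circ\mathrm{sp}^{m+1}_k$, we get $\mathrm{sp}^m_{k,*}(\Phi')=\mathrm{sp}^{m+1,m}_{k,*}(\phi^{m+1})=\phi^m$.

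Finally, since the construction of $\phi^{m+1}$ from $\phi^m$ is independent of the chosen lift (Lemma \ref{lem:independent}), we may use $\Phi'$ in place of $\Phi$ and conclude
\[
\mathrm{sp}^{m+1}_{k,*}\!\left(\tfrac{1}{\alpha}U_p\Phi'\right)\;=\;\phi^{m+1},
\]
so that $U_p\phi^{m+1}=\alpha\phi^{m+1}$, as required. There is essentially no genuine obstacle here: the only place where something could go wrong is the integrality needed to apply the independence lemma to $\Phi'$, but this is built into Lemma \ref{lem:lemma_for_convergence}(1) combined with the hypothesis that $\mathrm{sp}_k(\Phi)$ already satisfies the appropriate slope condition (inherited from $\phi^0\in L^\alpha_k(\mathcal{O})$-valued forms), which carries through each stage of the inductive lifting.
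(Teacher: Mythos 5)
Your proof is correct and is essentially the argument the paper intends: the paper dispatches the lemma in one line by citing the $\Sigma_0(p)$-equivariance of $\mathrm{sp}^{m+1}_{k,*}$, and your write-up simply unpacks that line, correctly supplying the two supporting facts it silently relies on, namely that $\Phi' = \tfrac{1}{\alpha}U_p\Phi$ is again an integral lift of $\phi^m$ (via Lemma \ref{lem:lemma_for_convergence}(1) and the inductive eigenvalue property of $\phi^m$) and that Lemma \ref{lem:independent} then identifies $\mathrm{sp}^{m+1}_{k,*}\bigl(\tfrac{1}{\alpha}U_p\Phi'\bigr)$ with $\phi^{m+1}$.
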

Lemma \ref{lem:independent}, Lemma \ref{lem:U_p-equivariance_of_lifting}, and Lemma \ref{lem:eigenvalues} directly imply the following proposition.
\begin{prop}[{\cite[Proposition 12]{greenberg-israel}}]\label{prop:lifting}
The lifted form $\phi^{m+1} \in S^{N^-} (N^+p, A^{m+1}\mathbf{D}_k(\mathcal{O}))$ is well-defined and independent of the choice of lift $\Phi$ used in the construction. Moreover, $U_p \phi^{m+1} = \alpha \phi^{m+1}$ in $S^{N^-} (N^+p, A^{m+1}\mathbf{D}_k(\mathcal{O}))$.
\end{prop}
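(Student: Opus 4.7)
The plan is to assemble the three preceding lemmas into the single statement, since the construction of $\phi^{m+1}$ has already produced an element of $\mathrm{Hom} (\mathbb{Z}[B^\times \backslash \widehat{B}^\times / \widehat{R}^{(p), \times}], A^{m+1}\mathbf{D}_k(\mathcal{O}))$ via the formula $\phi^{m+1} := \mathrm{sp}^{m+1}_{k,*} \bigl( \tfrac{1}{\alpha} U_p \Phi \bigr)$; what remains is only to check that this assignment (a) does not depend on the auxiliary choice of $\Phi$, (b) actually lands in the subspace $S^{N^-} (N^+p, A^{m+1}\mathbf{D}_k(\mathcal{O}))$ cut out by the transformation property under $\widehat{R}^{\times}_{N^+p}$, and (c) is a $U_p$-eigenform with eigenvalue $\alpha$.

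For (a), I would invoke Lemma \ref{lem:independent} directly: two different lifts $\Phi_1, \Phi_2$ of $\phi^m$ differ by an element of $\mathrm{Hom} (\mathbb{Z}[B^\times \backslash \widehat{B}^\times / \widehat{R}^{(p), \times}], \mathrm{Fil}^m \mathbf{D}_k(\mathcal{O}))$, and by Lemma \ref{lem:lemma_for_convergence}(2) the operator $\tfrac{1}{\alpha} U_p$ sends $\mathrm{Fil}^m \mathbf{D}_k(\mathcal{O})$ into $\mathrm{Fil}^{m+1} \mathbf{D}_k(\mathcal{O})$, which becomes $0$ after applying $\mathrm{sp}^{m+1}_k$.

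For (b), the transformation property with respect to the prime-to-$p$ part of $\widehat{R}^\times_{N^+p}$ is immediate since $\Phi$ already satisfies it and $U_p$ only modifies the $p$-component; the essential point is equivariance under $R^\times_{N^+p,p}$, which is exactly the content of Lemma \ref{lem:U_p-equivariance_of_lifting}. For (c), the relation $U_p \phi^{m+1} = \alpha \phi^{m+1}$ is a consequence of the $\Sigma_0(p)$-equivariance of $\mathrm{sp}^{m+1}_{k,*}$ combined with the commutation of $U_p$ with $\tfrac{1}{\alpha} U_p$; this is recorded as Lemma \ref{lem:eigenvalues}.

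There is no genuine obstacle here, as all three ingredients have been established; the proposition is a bookkeeping statement that packages the one-step lift into a well-defined, Hecke-equivariant map $\phi^m \mapsto \phi^{m+1}$. The only subtlety worth flagging explicitly is that one must verify the construction is insensitive to the choice of $\Phi$ \emph{before} checking $R^\times_{N^+p,p}$-invariance, since Lemma \ref{lem:U_p-equivariance_of_lifting} implicitly uses independence to compare $\gamma \circ \phi^{m+1}$ with $\phi^{m+1}$ through a second auxiliary lift.
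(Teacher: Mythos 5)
Your proposal is correct and matches the paper's proof, which likewise derives the proposition by directly combining Lemma \ref{lem:independent} (independence of the lift $\Phi$), Lemma \ref{lem:U_p-equivariance_of_lifting} ($R^\times_{N^+p,p}$-invariance), and Lemma \ref{lem:eigenvalues} (the $U_p$-eigenvalue). Your added remark on checking independence before invariance is a sensible refinement of the same bookkeeping argument.
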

In order prove Theorem \ref{thm:control}, it suffices to prove 
$$ S^{N^-} (N^+p, \mathbf{D}_k)^{U_p = \alpha} \simeq S^{N^-}_{k} (N^+p, E)^{U_p = \alpha} $$
for each $\alpha$ with $\mathrm{ord}_p(\alpha) < k-1$. Also, due to Remark \ref{rem:ordinary_integrality}.(2), the following theorem implies Corollary \ref{cor:control_ordinary} immediately.
\begin{thm}[Analogue of {\cite[Theorem 9]{greenberg-israel}}]
Let $\alpha \in E$ be an $U_p$-eigenvalue acting on $S^{N^-}_k (N^+p, E)$ with noncritical slope $h = \mathrm{ord}_p(\alpha) < k-1$.
Then the specialization map induces an Hecke-equivariant isomorphism
$$\mathrm{sp}_{k,*} : S^{N^-}(N^+p, \mathbf{D}_k(E))^{U_p = \alpha} \to S^{N^-}_k(N^+p, E)^{U_p = \alpha}.$$
\end{thm}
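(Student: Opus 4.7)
The plan is to construct an inverse to $\mathrm{sp}_{k,*}$ by an iterated lifting argument, following Greenberg's strategy as already adapted in Proposition \ref{prop:lifting}, and then to argue injectivity by a telescoping estimate on the filtration $\mathrm{Fil}^{\bullet}\mathbf{D}_k(\mathcal{O})$.

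\medskip

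\textbf{Surjectivity.} Given a classical eigenform $\phi^0 := \phi_{f_k} \in S^{N^-}_k(N^+p, E)^{U_p = \alpha}$ of slope $h < k-1$, scale so that $\phi^0$ takes values in $L^{\alpha}_k(\mathcal{O})$; this is possible since $L^{\alpha}_k(\mathcal{O}) \otimes_{\mathcal{O}} E = L_k(E)$. Identifying $L_k(\mathcal{O}) \simeq A^0 \mathbf{D}_k(\mathcal{O})$, we iteratively apply Proposition \ref{prop:lifting} to produce a compatible system $\phi^m \in S^{N^-}(N^+p, A^m \mathbf{D}_k(\mathcal{O}))$ with $U_p \phi^m = \alpha \phi^m$ and $\mathrm{sp}^{m+1,m}_{k,*}(\phi^{m+1}) = \phi^m$. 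Taking the inverse limit
\begin{equation*}
\Phi := \varprojlim_{m} \phi^m \ \in\ \varprojlim_{m} S^{N^-}(N^+p, A^m \mathbf{D}_k(\mathcal{O})) = S^{N^-}(N^+p, \mathbf{D}_k(\mathcal{O})),
\end{equation*}
which makes sense since $\mathbf{D}_k(\mathcal{O}) = \varprojlim_{m} A^m \mathbf{D}_k(\mathcal{O})$ by definition of the filtration and since the domain $\domain$ is a finite set (so the inverse limit commutes with $\mathrm{Hom}$ from it). By Lemma \ref{lem:moments_invert_p}, $\Phi$ defines an element of $S^{N^-}(N^+p, \mathbf{D}_k)$, it satisfies $U_p \Phi = \alpha \Phi$ because this identity holds at each finite approximation, and $\mathrm{sp}_{k,*}(\Phi) = \phi^0$ by construction.

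\medskip

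\textbf{Injectivity.} Suppose $\Psi \in S^{N^-}(N^+p, \mathbf{D}_k)^{U_p = \alpha}$ satisfies $\mathrm{sp}_{k,*}(\Psi) = 0$. Since the moments of any element of $\mathbf{D}_k$ are uniformly bounded, we may scale $\Psi$ so that it is $\mathbf{D}_k(\mathcal{O})$-valued. The identity
\begin{equation*}
\mathrm{sp}_k(\mu) = \sum_{i=0}^{k-2} (-1)^i \binom{k-2}{i} \mu(z^i) \cdot X^i Y^{k-2-i}
\end{equation*}
shows that $\ker(\mathrm{sp}_k) = \mathrm{Fil}^0 \mathbf{D}_k(\mathcal{O})$, so $\Psi$ takes values in $\mathrm{Fil}^0 \mathbf{D}_k(\mathcal{O})$. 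We argue by induction that $\Psi$ is $\mathrm{Fil}^m \mathbf{D}_k(\mathcal{O})$-valued for every $m$. Writing $U_p$ as a finite sum of actions by $\left(\begin{smallmatrix} p & a \\ 0 & 1 \end{smallmatrix}\right)$ and invoking Lemma \ref{lem:lemma_for_convergence}(2), one gets $U_p \Psi \in \alpha \cdot \mathrm{Fil}^{m+1}\mathbf{D}_k(\mathcal{O})$. Combined with $U_p \Psi = \alpha \Psi$, this forces $\Psi \in \mathrm{Fil}^{m+1}\mathbf{D}_k(\mathcal{O})$. Since $\bigcap_m \mathrm{Fil}^m \mathbf{D}_k(\mathcal{O}) = 0$ (the moments are bounded in $\mathcal{O}$, hence divisibility by $\varpi^m$ for all $m$ forces vanishing), we conclude $\Psi = 0$.

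\medskip

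The Hecke-equivariance outside $p$ is automatic since the Hecke operators at primes $\ell \neq p$ act on the source groupoid $\domain$ and commute with the $\Sigma_0(p)$-action on coefficients; equivariance for $U_p$ is built into the construction. The main obstacle is really the clean bookkeeping in the lifting step: one must verify that at each stage the one-step lift $\phi^{m+1}$ given by $\tfrac{1}{\alpha}U_p$ applied to an \emph{arbitrary} set-theoretic lift lands in $A^{m+1}\mathbf{D}_k(\mathcal{O})$, is independent of the auxiliary choice, and remains $R^{\times}_{N^+p,p}$-equivariant — this is exactly where the non-critical slope hypothesis $h < k-1$ is crucial, as it ensures the contraction estimate of Lemma \ref{lem:lemma_for_convergence}(1) provides integrality at each step rather than merely after inverting $p$.
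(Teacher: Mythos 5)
Your proof is correct and follows essentially the same route as the paper, whose own proof simply defers verbatim to Greenberg and cites exactly the ingredients you use: Lemma \ref{lem:moments_invert_p} and Lemma \ref{lem:lemma_for_convergence}.(2) for injectivity, and Proposition \ref{prop:lifting} (iterated lifting plus passage to the inverse limit over the approximation modules) for surjectivity. One small imprecision: $\domain$ is \emph{not} a finite set (only the quotient by the full level subgroup $\widehat{R}^{\times}_{N^+p}$ is finite); the inverse limit still commutes with forming $S^{N^-}(N^+p,-)$ because the $p$-component of the level is open, so a form is determined by its values on the finitely many representatives of that finite quotient.
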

\begin{proof}
A proof can be taken verbatim from that of \cite[Theorem 9]{greenberg-israel}. We just remark that
Lemma \ref{lem:moments_invert_p} and Lemma \ref{lem:lemma_for_convergence}.(2) are used to prove the injectivity, and 
Proposition \ref{prop:lifting} is used to prove the surjectivity.
\end{proof}

\section{Construction of $p$-adic $L$-functions} \label{sec:construction}
The goal of this section is to give an overconvergent construction of anticyclotomic $p$-adic $L$-functions as admissible distributions and reconstruct the corresponding theta elements from the distributions.

Let $h < k-1$. We briefly recall the $h$-admissibility of distributions on locally polynomials on $\mathbb{Z}_p$ and the work of Amice-Velu \cite{amice-velu} and Visik \cite{vishik} (Theorem \ref{thm:amice-velu}) on the lifting $h$-admissible distributions on locally polynomial functions on $\Gamma_\infty \simeq \mathbb{Z}_p$ of degree $\leq k-2$ to locally analytic distributions on $\Gamma_\infty$.
Then we explicitly define the $h$-admissible distribution on locally polynomial functions on $\Gamma_\infty$ of degree $\leq k-2$ in terms of the values of quaternionic forms.
Applying the lifting result, the distribution extends to a locally analytic distribution on  $\Gamma_\infty$.

\subsubsection{Preliminaries on distributions}
We recall the unique lifting of $h$-admissible distributions on locally polynomial functions of degree $\leq k-2$ to locally analytic distributions. See \cite[$\S$1.3]{vishik} and \cite[$\S$2.1]{pollack-thesis} for detail.

Let $\mathcal{C}^{h}(\mathbb{Z}_p)$ be the space of $\mathbb{C}_p$-valued functions on $\mathbb{Z}_p$ which are locally polynomials of degree $\leq h$.
\begin{defn}[$h$-admissible distributions on locally polynomials] \label{defn:admissible_distributions}
A \textbf{$h$-admissible distribution $\mu$ on $\mathbb{Z}_p$} is a $\mathbb{C}_p$-linear map from $\mathcal{C}^{h}(\mathbb{Z}_p)$ to $\mathbb{C}_p$ such that
$$\sup_{a \in \mathbb{Z}_p} \left\vert \mu \left( (z-a)^i \cdot \mathbf{1}_{a+p^n\mathbb{Z}_p}(z) \right) \right\vert$$
is $O(p^{n(h-i)})$ for $0 \leq i \leq h$.
\end{defn}

Let $\mathscr{D}$ be the algebra of locally analytic distributions on $\mathbb{Z}_p$ with convolution product $*$ but forgetting the weight $k$ action of $\Sigma_0(p)$.
\begin{thm}[Amice-Velu, Visik] \label{thm:amice-velu}
Let $\mu$ be a $h$-admissible distribution on locally polynomial functions on $\mathbb{Z}_p$ of degree less than or equal to $k-2$. Then $\mu$ extends uniquely to a distribution on locally analytic functions on $\mathbb{Z}_p$, i.e. $\mu \in \mathscr{D}$.
\end{thm}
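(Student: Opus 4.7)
The plan is to construct the extension by Taylor expansion and control the error via admissibility. Fix a locally analytic function $f : \mathbb{Z}_p \to \mathbb{C}_p$. For each $n$ sufficiently large and each coset representative $a \in \mathbb{Z}/p^n\mathbb{Z}$, we have a convergent Taylor expansion $f(z) = \sum_{i \geq 0} c_{a,n,i}(z-a)^i$ on $a + p^n \mathbb{Z}_p$, with $|c_{a,n,i}|\, p^{-ni} \to 0$ (indeed, decaying geometrically). Truncating at degree $k-2$ yields a locally polynomial approximant
\[
f_n(z) := \sum_{a \in \mathbb{Z}/p^n\mathbb{Z}} \left( \sum_{i=0}^{k-2} c_{a,n,i}(z-a)^i \right) \mathbf{1}_{a+p^n\mathbb{Z}_p}(z) \in \mathcal{C}^{k-2}(\mathbb{Z}_p),
\]
and I would set $\mu(f) := \lim_{n \to \infty} \mu(f_n)$, provided the limit exists and is independent of the choices.

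The first task is convergence: I would compare $\mu(f_n)$ and $\mu(f_{n+1})$ by subdividing each $a + p^n\mathbb{Z}_p$ into the $p$ cosets $a + bp^n + p^{n+1}\mathbb{Z}_p$, re-expanding the degree-$\leq\! k\!-\!2$ Taylor polynomial centered at $a$ around $a+bp^n$, and noting that the resulting polynomial of degree $\leq k-2$ differs from the truncated Taylor polynomial at $a+bp^n$ precisely by terms originating from Taylor coefficients $c_{a,n,i}$ of $f$ with $i \geq k-1$. Pairing this difference against $\mu$ and applying the admissibility bound $|\mu((z - a')^j \mathbf{1}_{a' + p^{n+1}\mathbb{Z}_p})| = O(p^{(n+1)(h-j)})$ for $0 \leq j \leq h$, together with the decay estimate $|c_{a,n,i}| = O(p^{n(i-1)}\cdot\delta^i)$ for some $0 < \delta < 1$ coming from the local analyticity of $f$, one obtains a bound of the form $C \cdot \delta^{k-1} \cdot p^{n(h - k + 1)}$ for $|\mu(f_{n+1}) - \mu(f_n)|$. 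Since $h < k-1$ by hypothesis, the exponent $h - k + 1$ is strictly negative, so the sequence is Cauchy and the limit exists.

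The remaining steps are standard. Independence of the choice of coset representative within each $a+p^n\mathbb{Z}_p$ follows because changing $a$ to $a'\in a+p^n\mathbb{Z}_p$ merely shifts the center of the Taylor expansion; the truncations $f_n$ computed with respect to either choice differ by a locally polynomial function whose $\mu$-values cancel in the limit (again by the admissibility bound applied to the $i \geq k-1$ remainder). Linearity and continuity of $f \mapsto \mu(f)$ in the locally analytic topology follow from the corresponding properties of each $\mu_n$. For uniqueness, any locally analytic extension $\tilde\mu$ of $\mu$ must satisfy $\tilde\mu(f_n) = \mu(f_n)$ by definition; and the error $f - f_n$ is locally analytic with Taylor expansion starting in order $\geq k-1$, hence converges to $0$ in the locally analytic topology, so continuity of $\tilde\mu$ forces $\tilde\mu(f) = \mu(f)$.

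The main obstacle is the convergence estimate in the second paragraph: the admissibility condition permits moments to grow like $p^{nh}$, so the proof must exploit the gap $h < k - 1$ decisively, extracting exactly $k-1-h$ units of extra $p$-adic decay from the Taylor tail of $f$. This balance is precisely what the Amice--Velu--Visik admissibility condition is designed to make work, and this is where all the subtlety of the noncritical slope hypothesis enters.
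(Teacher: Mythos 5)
The paper offers no proof of this theorem --- it is quoted from Amice--Velu and Vishik --- so your argument should be measured against the standard one in those references. Your existence construction is that standard telescoping argument, and its mechanism is correctly located: the order-$\geq k-1$ Taylor tail contributes a factor $p^{-n(k-1-j)}$ to the degree-$j$ component of $P_a - P_{a'}$, which beats the admissible growth $p^{n(h-j)}$ of the moments exactly because $h<k-1$. But two steps fail as written. The more serious one is uniqueness. You claim that $f-f_n\to 0$ in the locally analytic topology, so that \emph{any} continuous extension is pinned down; this is false. The functional $\lambda(f):=f^{(k-1)}(0)$ is a nonzero element of $\mathscr{D}$ that vanishes on every locally polynomial function of degree $\leq k-2$, so $f_n\not\to z^{k-1}$ in $C^{\mathrm{la}}(\mathbb{Z}_p)$ (the truncations never sit inside a fixed Banach stage of the inductive limit), and an extension inside all of $\mathscr{D}$ is never unique: $\widetilde\mu$ and $\widetilde\mu+\lambda$ agree on $\mathcal{C}^{k-2}(\mathbb{Z}_p)$. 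The correct assertion, and the one actually proved by Amice--Velu and Vishik, is uniqueness among distributions \emph{of order $h$}; its proof must reuse the quantitative estimate, namely that the order-$h$ growth bound applied to the Taylor tail gives $|\widetilde\mu(f-f_n)|=O(p^{n(h-k+1)})\to 0$, rather than appeal to abstract continuity.

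Second, your Cauchy estimate pairs $\mu$ against $(z-a')^j$ for every $0\leq j\leq k-2$, since the difference of two degree-$(k-2)$ truncations has components in all those degrees, but you invoke the admissibility bound of Definition \ref{defn:admissible_distributions} only for $j\leq h$. For $h<j\leq k-2$ you have no control, and these moments are genuinely unconstrained by that hypothesis (one may perturb $\mu$ by $f\mapsto f^{(k-2)}(0)$ without disturbing any moment of degree $\leq h$). The standard proof sidesteps this by truncating the Taylor expansion at degree $\lfloor h\rfloor$ rather than $k-2$: then only controlled moments appear, the tail starts at order $\lfloor h\rfloor+1>h$, and the given values of $\mu$ in degrees between $\lfloor h\rfloor+1$ and $k-2$ are recovered from (and must be consistent with) the extension afterwards. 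A smaller point: your intermediate estimate $|c_{a,n,i}|=O(p^{n(i-1)}\delta^{i})$ does not combine with the moment bound to yield the stated $O(\delta^{k-1}p^{n(h-k+1)})$; what you actually need, and what holds, is simply that $\sup_{a}|c_{a,i}|$ is bounded for each fixed $i$ once $n$ exceeds the radius of local analyticity of $f$.
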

\begin{proof}
See \cite[Proposition II.2.4]{amice-velu}, \cite[Lemma 2.10 and Theorem 3.3]{vishik}, \cite[Theorem in $\S$11]{mtt}, and \cite[(6.5) Corollary]{glenn-oms} for detail. Note that the original statement is given with distributions on $\mathbb{Z}^\times_p$ rather than on $\mathbb{Z}_p$. See \cite[$\S$1.8 and $\S$2.4]{vishik} for the modification.
\end{proof}

\subsection{The distribution} \label{subsec:the_distribution}
Let $f_k \in S_k(\Gamma_0(Np))$ be a $p$-stabilized newform of non-critical slope and $\phi_{f_k}$ be the associated integrally normalized quaternionic form in $S^{N^-}_k(N^+p, \mathcal{O})^{(<k-1)}$ 
as in $\S$\ref{subsec:integral_normalization}.

\begin{prop}[{\cite[Lemma 6.2]{pollack-stevens}}] \label{prop:admissibility}
All the values of $\Phi_{f_k}$ are $(k-1)$-admissible distributions.
\end{prop}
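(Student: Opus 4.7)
The plan is to imitate \cite[Lemma 6.2]{pollack-stevens}: use the $U_p$-eigenvalue relation to trade $n$ applications of a shift-and-rescale operator against $n$ powers of $\alpha = \alpha_p(f_k)$, then read off the $p$-adic valuations. Fix $b \in \widehat{B}^\times$ and set $\mu := \Phi_{f_k}(b) \in \mathbf{D}_k$. I want to bound $\left|\mu\left((z-a_0)^i \mathbf{1}_{a_0 + p^n\mathbb{Z}_p}\right)\right|$ uniformly in $a_0 \in \mathbb{Z}_p$ by $C \cdot p^{n(h-i)}$; since $h < k-1$, this is \emph{a fortiori} $O(p^{n(k-1-i)})$ for $0 \leq i \leq k-1$, which is exactly what $(k-1)$-admissibility asks for.

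First I would observe that all values of $\Phi_{f_k}$ are uniformly bounded: the domain $\domain$ is finite, and Lemma \ref{lem:moments_invert_p} identifies $\mathbf{D}_k$ with $\mathbf{D}_k(\mathcal{O})[1/p]$, so there exists $N \geq 0$ such that $\Phi_{f_k}(b') \in \varpi^{-N}\mathbf{D}_k(\mathcal{O})$ for every $b' \in \domain$; equivalently, every moment of every value of $\Phi_{f_k}$ has absolute value at most $p^N$.

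Next I would iterate the $U_p$-eigenvalue relation $n$ times. The standard double coset decomposition combined with the $U_p$-convention of $\S$\ref{sec:automorphic_forms} produces matrices $\gamma_{n,a} = \left(\begin{smallmatrix} p^n & a \\ 0 & 1 \end{smallmatrix}\right) \in \Sigma_0(p)$ indexed by $a \in \{0, 1, \ldots, p^n-1\}$, giving an identity of the form
\[
\alpha^n \Phi_{f_k}(b) \;=\; \sum_{a=0}^{p^n-1} \gamma_{n,a} \circ \Phi_{f_k}(b_{n,a})
\]
for suitable $b_{n,a} \in \widehat{B}^\times$ coming from the adelic translates of $\gamma_{n,a}^{-1}$ at the $p$-place. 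Since the sup over $a_0 \in \mathbb{Z}_p$ in Definition \ref{defn:admissible_distributions} depends only on the coset $a_0 \bmod p^n$, I may reduce to $a_0 \in \{0, \ldots, p^n-1\}$; then evaluating both sides on $g(z) := (z-a_0)^i \mathbf{1}_{a_0+p^n\mathbb{Z}_p}(z)$ and using $(g\mid_k \gamma_{n,a})(z) = g(p^n z + a)$, the pull-back $g\mid_k \gamma_{n,a}$ vanishes identically on $\mathbb{Z}_p$ unless $a = a_0$, in which case it equals $p^{ni} z^i$. Hence
\[
\mu\left((z-a_0)^i \mathbf{1}_{a_0+p^n\mathbb{Z}_p}\right) \;=\; \alpha^{-n} \cdot p^{ni} \cdot \Phi_{f_k}(b_{n,a_0})(z^i),
\]
and combining $|\alpha^{-n}| = p^{nh}$, $|p^{ni}| = p^{-ni}$, and $\left|\Phi_{f_k}(b_{n,a_0})(z^i)\right| \leq p^N$ yields $|\mu(g)| \leq p^N \cdot p^{n(h-i)}$.

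The hard part will be the bookkeeping in the middle step: one has to align the precise convention of $U_p$ used in $\S$\ref{sec:automorphic_forms}, the weight-$k$ left action of $\Sigma_0(p)$ on $\mathbf{D}_k$ fixed in $\S$\ref{subsec:distributions} (including the adjugate-twist chosen there), and the adelic normalization of the elements $b_{n,a}$, so that the surviving summand is honestly $p^{ni} z^i$ with no parasitic binomial coefficients or determinant twists. Once this alignment is pinned down, the valuation estimate above is immediate and gives the asserted $(k-1)$-admissibility at every point $b \in \domain$.
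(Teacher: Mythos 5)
Your argument is exactly the one behind \cite[Lemma 6.2]{pollack-stevens}, which is all the paper invokes here: iterate the $U_p$-eigenvalue relation, note that only the $a=a_0$ summand survives on a test function supported in $a_0+p^n\mathbb{Z}_p$, and trade $|\alpha^{-n}|=p^{nh}$ against $|p^{ni}|=p^{-ni}$ and the uniform boundedness of moments coming from Lemma \ref{lem:moments_invert_p} and the finiteness of $\domain$. The bookkeeping you flag is already pinned down in $\S$\ref{subsec:the_distribution}, where the same $U_p$-expansion is written out with $(g\mid_k\gamma_{n,a})(z)=g(p^nz+a)$ and no parasitic twists, so the proof is correct and matches the paper's (cited) one.
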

From now on, we explicitly determine the distribution $\Phi_{f_k} (\varsigma^{(1)})$, which is an half of the $p$-adic $L$-function.
\begin{defn}[The distribution] \label{def:the_distribution}
$$\mu_{f_k, K_\infty} := \Phi_{f_k} (\varsigma^{(1)}) .$$
\end{defn}
\begin{notation}
We fix notation:
\[
\xymatrix@R=0em{
\widehat{K}^\times \ar@{->>}[r] & \widetilde{G}_\infty \ar@{->>}[r] & \Gamma_\infty \ar[r]^-{\simeq} & \mathbb{Z}_p \\
\xi \ar@{|->}[rr] & & \xi_a \ar@{|->}[r] & a
}
\]
so that $\xi_a W_n = a+ p^n\mathbb{Z}_p$ under the last identification. This one also yields the following equality at the level of twisted explicit Gross points:
$$x_n(\xi_a) = \varsigma^{(1)}  \cdot \left( \begin{matrix} p^n & a \\ 0 & 1\end{matrix}  \right) .$$
\end{notation}

Due to Theorem \ref{thm:amice-velu} and Proposition \ref{prop:admissibility}, in order to determine the distribution explicitly, 
it suffices to compute the values
$$\Phi_{f_{k}}  ( \varsigma^{(1)} ) ( z^j \cdot \mathbf{1}_{\xi_a W_n} )$$
 explicitly for all $n \geq 1$, $a \in \mathbb{Z}/p^n\mathbb{Z}$ and $j = 0, \cdots, k-2$.

\begin{defn}[The $j$-th component of an quaternionic form]
We define the \textbf{$j$-th component $\phi^{[j]}_{f_k}$ of $\phi_{f_k}$} by the composition
\[
\xymatrix{
\domain \ar[r]^-{\phi_{f_k}} \ar@/_2pc/[rrr]_-{\phi^{[j]}_{f_k}} & L_k(\mathcal{O}) \ar[rr]^-{\langle -,X^{k-2-j}Y^{j}\rangle_k} & & E 
}
\]
for $j = 0, \cdots, k-2$.
\end{defn}

First, we compare the evaluation of overconvergent quaternionic forms and the specialization map, which describe the ``total measure."
\begin{lem}[on the comparison of the total measure] \label{lem:total_measure}
Let $b \in \domain$.
Then 
$$\Phi_{f_{k}} ( b ) (z^j) = \phi^{[j]}_{f_k} (b)$$
for $j = 0, \cdots k-2$.
\end{lem}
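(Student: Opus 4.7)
The plan is to trace $\Phi_{f_k}(b)$ through the specialization map and then pair with the basis vector $X^{k-2-j}Y^j$, using the fact that the pairing in $\S\ref{subsubsec:pairing}$ is constructed precisely to act as a ``coordinate projection'' on monomials. The control theorem (Theorem \ref{thm:control}) identifies $\phi_{f_k}$ with $\mathrm{sp}_{k,*}(\Phi_{f_k})$, so everything reduces to a direct computation.

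First, I would unwind the definition of $\mathrm{sp}_k$ from $\S\ref{subsec:distributions}$. Expanding the binomial,
\[
\mathrm{sp}_k(\Phi_{f_k}(b)) = \int_{\mathbb{Z}_p}(Y-zX)^{k-2}\,d\Phi_{f_k}(b)(z) = \sum_{i=0}^{k-2}(-1)^{i}\binom{k-2}{i}\,\Phi_{f_k}(b)(z^{i})\cdot X^{i}Y^{k-2-i}.
\]
By Theorem \ref{thm:control}, the left-hand side is exactly $\phi_{f_k}(b)$. So I have an explicit expansion of $\phi_{f_k}(b)$ in the monomial basis of $L_k$, with the $i$-th coefficient equal to $(-1)^{i}\binom{k-2}{i}\Phi_{f_k}(b)(z^{i})$.

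Next, pair against $X^{k-2-j}Y^{j}$. The orthogonality relation $\langle X^{i}Y^{k-2-i}, X^{k-2-j}Y^{j}\rangle_k = (-1)^{i}\binom{k-2}{i}^{-1}\delta_{i,j}$ from $\S\ref{subsubsec:pairing}$ kills every term except $i=j$, and the two sign/binomial factors cancel exactly:
\[
\phi^{[j]}_{f_k}(b)=\langle\phi_{f_k}(b),X^{k-2-j}Y^{j}\rangle_k = (-1)^{j}\binom{k-2}{j}\Phi_{f_k}(b)(z^{j})\cdot(-1)^{j}\binom{k-2}{j}^{-1} = \Phi_{f_k}(b)(z^{j}),
\]
which is the claim.

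There is essentially no obstacle here beyond bookkeeping: the pairing $\langle-,-\rangle_k$ and the specialization map $\mathrm{sp}_k$ were normalized precisely so that this duality is tautological. The only thing worth double-checking is the sign convention in $(Y-zX)^{k-2}$ versus the factor $(-1)^i$ in the pairing, which is exactly what makes the cancellation clean. (The potential $(k-2)!$ denominators flagged after the pairing definition do not appear in this statement because the pairing is evaluated against a single monomial, not across the whole symmetric power.)
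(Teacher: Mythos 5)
Your proof is correct and follows essentially the same route as the paper: identify $\mathrm{sp}_{k,*}(\Phi_{f_k})(b)$ with $\phi_{f_k}(b)$ via the control theorem, then pair the binomial expansion of $\int(Y-zX)^{k-2}\,d\Phi_{f_k}(b)$ against $X^{k-2-j}Y^{j}$, where the sign and binomial factors cancel by the normalization of $\langle-,-\rangle_k$. You have merely written out the cancellation explicitly where the paper leaves it implicit.
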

\begin{proof}
Recall the specialization map
$$\mathrm{sp}_k \left( \Phi_{f_{k}} ( b )  \right) = \int_{\mathbb{Z}_p} (Y -zX)^{k-2} d \Phi_{f_{k}} (b) (z)  .$$
Also, the control theorem (Theorem \ref{thm:control}) implies that
\begin{align*}
\mathrm{sp}_{k,*} \left( \Phi_{f_{k}} \right) ( b )  & = \mathrm{sp}_k \left( \Phi_{f_{k}} (b)  \right) \\
& = \phi_{f_{k}} (b) .
\end{align*}
Thus, we have
$$\phi_{f_{k}} (b) = \int_{\mathbb{Z}_p} (Y -zX)^{k-2} d \Phi_{f_{k}} (b) (z) \in L_k.$$
Pairing with $\langle - , X^{k-2-j} Y^j \rangle_k$ as in $\S$\ref{subsubsec:pairing}, we have the following equality of the total measure.
$$\int_{\mathbb{Z}_p} z^j d \Phi_{f_k} (b) (z) = \phi^{[j]}_{f_k} (b)$$
for all $j = 0, \cdots, k-2$.
\end{proof}

Now we compute all the values following \cite[Proposition 6.3]{pollack-stevens}.
Since $\Phi_{f_k}$ is an $U_p$-eigenform with eigenvalue $\alpha$, we have
\begin{align*}
\Phi_{f_{k}} ( \varsigma^{(1)} ) & = \alpha^{-n} \left( U^n_p \Phi_{f_{k}} \right) ( \varsigma^{(1)} ) \\
& = \alpha^{-n} \sum_{b=0}^{p^n-1}  \left( \begin{smallmatrix} p^n & b \\ 0 & 1\end{smallmatrix}  \right) \circ \left( \Phi_{f_{k}}  ( \varsigma^{(1)} \cdot \left( \begin{smallmatrix} p^n & b \\ 0 & 1\end{smallmatrix}  \right) ) \right) \\
& = \alpha^{-n} \sum_{b=0}^{p^n-1}  \left( \begin{smallmatrix} p^n & b \\ 0 & 1\end{smallmatrix}  \right) \circ \left( \Phi_{f_{k}}  ( x_n(\xi_b) ) \right) .
\end{align*}
For any distribution $\mu \in \mathscr{D}_k$, the support of $\left( \begin{smallmatrix} p^n & a \\ 0 & 1\end{smallmatrix}  \right) \circ \mu$ is contained in $a + p^n \mathbb{Z}_p$.
Thus, for $0 \leq j \leq k-2$, we have
\begin{align*}
\Phi_{f_k} (\varsigma^{(1)}) (z^j \cdot \mathbf{1}_{a+p^n\mathbb{Z}_p}(z)) & = \alpha^{-n} \cdot \left( U^n_p \Phi_{f_k} \right) ( \varsigma^{(1)}  ) (z^j \cdot \mathbf{1}_{a+p^n\mathbb{Z}_p}(z)) \\
& = \alpha^{-n} \cdot \sum_{b=0}^{p^n-1}  \left( \begin{smallmatrix} p^n & b \\ 0 & 1\end{smallmatrix}  \right) \circ \left( \Phi_{f_k}  ( x_n(\xi_b) ) \right) (z^j \cdot \mathbf{1}_{a+p^n\mathbb{Z}_p}(z)) \\
& = \alpha^{-n} \cdot \sum_{b=0}^{p^n-1}  \left( \Phi_{f_k}  ( x_n(\xi_b) ) \right) ( (p^n z+b)^j \cdot \mathbf{1}_{a+p^n\mathbb{Z}_p}(p^n z+b)) \\
& = \alpha^{-n} \cdot   \Phi_{f_k}  ( x_n(\xi_a))  ( (p^n z+a)^j ) \\
& = \alpha^{-n} \cdot \sum_{i=0}^{j}  \Phi_{f_k}   ( x_n(\xi_a))  \left( \left( {\begin{matrix} j \\ i \end{matrix}} \right) \cdot (p^n z )^i \cdot a^{j-i} \right)  \\
& =  \sum_{i=0}^{j} \left( \frac{p^i}{\alpha} \right)^{n} \cdot  \phi^{[i]}_{f_k}   ( x_n(\xi_a))  \cdot a^{j-i}  . & \textrm{Lemma \ref{lem:total_measure}} \\
\end{align*}
Thus, the distribution $\Phi_{f_k}(\varsigma^{(1)})$ is completely determined.
Also, for $j \geq 1$, we have
\begin{equation} \label{eqn:overconvergent_classical_comparison}
\Phi_{f_k} (\varsigma^{(1)}) \left( \left( z^j - a \cdot z^{j-1} \right) \cdot \mathbf{1}_{a+p^n\mathbb{Z}_p}(z) \right)
= \left( \frac{p^j}{\alpha} \right)^{n} \cdot  \phi^{[j]}_{f_k}   ( x_n(\xi_a))  .
\end{equation}

Let $\mu^{-1}_{f_k, K_\infty}$ is the distribution in $\mathscr{D}$ determined by the values
$$\mu^{-1}_{f_k, K_\infty} (z^j \cdot \mathbf{1}_{a+p^n\mathbb{Z}_p})
= \sum_{i=0}^{j} \left( \frac{p^i}{\alpha} \right)^{n} \cdot  \phi^{[i]}_{f_k}   ( x^{-1}
_n(\xi_{a}))  \cdot a^{j-i} .$$
\begin{defn}[$p$-adic $L$-functions] \label{defn:p-adic_L-functions}
The \textbf{$p$-adic $L$-function $L_p(K_\infty, f)$} is defined by the convolution product of distributions
$$\mu_{f_k, K_\infty} * \mu^{-1}_{f_k, K_\infty} \in  \mathscr{D} .$$
\end{defn}
\begin{rem}
Here, the convolution product can be understood as a $p$-adic ``monodromy pairing" or ``height pairing" in the spirit of Gross or Gross-Zagier formula. 
This element is well-defined up to a nonzero constant in $\mathcal{O}^\times_E$ since all the choices defining $\mu_{f_k, K_{\infty}}$ and $\mu^{-1}_{f_k, K_{\infty}}$ cancel each other.
\end{rem}

\subsection{Reconstruction of theta elements}
In order to obtain the interpolation formula more explicitly, we compare our $p$-adic $L$-functions and those of \cite{chida-hsieh-p-adic-L-functions} at the level of theta elements (finite layers). 
\begin{defn}[Theta elements]
Let
$$\widetilde{\theta}_n( f_k)  :=  \sum_{\xi \in \widetilde{G}_n} \left( \frac{1}{\alpha}\right)^n \cdot  \phi^{[0]}_{f_k} ( x_n(\xi)) \xi^{-1} \in E[\widetilde{G}_n]$$
and
$$\widetilde{\theta}^{*}_n( f_k)   :=  \sum_{\xi \in \widetilde{G}_n} \left( \frac{1}{\alpha}\right)^n \cdot  \phi^{[0]}_{f_k} ( x^{-1}_n(\xi)) \xi^{-1} \in E[\widetilde{G}_n].$$
We define the \textbf{$n$-th theta elements of $f_k$} by
$$\theta_n( f_k)  :=  \textrm{the image of } \theta(\widetilde{K}_n, f_k) \textrm{ under the projection } E[\widetilde{G}_n] \to E[\Gamma_n]$$
and
$$\theta^*_n( f_k)  :=  \textrm{the image of } \theta^*(\widetilde{K}_n, f_k) \textrm{ under the projection } E[\widetilde{G}_n] \to E[\Gamma_n] .$$
\end{defn}

\begin{rem}[on the well-definedness]
Each element is defined only up to multiplication by an element of $\widetilde{G}_n$ due to the choices we made.
The element 
$$L_{p,n}(K_\infty, f_k ):= \theta_n( f_k) \cdot \theta^*_n( f_k) \in E[\Gamma_n]$$
 is only well-defined.
\end{rem}
\begin{rem}[on the boundedness of coefficients]
By construction, it is easy to observe that
$$\theta_n( f_k) \in \frac{1}{\alpha^n}\mathcal{O}_E[\Gamma_n].$$
(cf. \cite[Remark 2.5 and Definition 2.6]{castella-longo}, \cite[Lemma 4.4.(1)]{chida-hsieh-p-adic-L-functions}.)
\end{rem}

\section{The weak interpolation formula} \label{sec:weak_interpolation}
The goal of this section to prove the ``weak" interpolation formula for our $p$-adic $L$-functions, indeed $p$-adic theta elements. We use the interpolation formula for complex theta elements in \cite{chida-hsieh-p-adic-L-functions}.
Since our $p$-adic theta elements and complex theta elements of \cite{chida-hsieh-p-adic-L-functions} are only congruent modulo $p^n$ at explicit Gross points of conductor $p^n$ (Corollary \ref{cor:higher_weight_theta_congruences}), the interpolation formula is given only as a congruence formula unless the form is ordinary or of weight two.
\begin{rem} \label{rem:normalization_comparison_with_CH}
All the normalizations are \emph{slightly different} from those of \cite{chida-hsieh-p-adic-L-functions} mainly due to the normalization of the pairing. Also, the index is also \emph{different} because \cite{chida-hsieh-p-adic-L-functions} mainly focus on the \emph{central} critical $L$-values and we mainly concern the growth of the distribution we defined. 
\end{rem}

\subsection{Complex quaternionic forms and $p$-adic quaternionic forms} \label{subsec:complex_p-adic_quaternionic_forms}
Using $i_\mathbb{C}$, we define the representation
$$\rho_{k,\infty}: B(\mathbb{R})^\times \to \mathrm{GL}_2(\mathbb{C}) \to \mathrm{Aut}_{\mathbb{C}}(L_k(\mathbb{C})) .$$
Then $\mathbb{C} \cdot X^{i}Y^{k-2-i} \subseteq L_k(\mathbb{C})$ or $L_k(2-k)(\mathbb{C})$ is the eigenspace on which 
$\rho_{k, \infty}(t)$ with eigenvalue $\overline{t}^i \cdot t^{k-2-i}$
or
$\rho^*_{k, \infty}(t)$ with eigenvalue $\overline{t}^{i-(k-2)} \cdot t^{-i}$
 for $t \in (K \otimes \mathbb{R})^\times$. (cf.~\cite[$\S$2.3]{chida-hsieh-p-adic-L-functions}.)

Let $U$ be an open compact subgroup of $\widehat{B}^\times$.
\begin{defn}[Complex quaternionic forms] \label{def:complex_quaternionic_forms}
A function $\mathbf{f}_k : \widehat{B}^\times \to  L_k(\mathbb{C})$ is a \textbf{complex quaternionic form of weight $k$ and level $U$} if $\mathbf{f}_k$ satisfies the transformation property
$$\mathbf{f}_k(\alpha b u) = \rho_{k, \infty}(\alpha) \circ \mathbf{f}_k(b)$$
where $\alpha \in B^\times$ and $u \in U$.
\end{defn}
The space of complex quaternionic forms is denoted by $\mathbf{S}^{N^-}_k(U, \mathbb{C})$. 
Then ${\displaystyle \mathbf{S}^{N^-}_k(\mathbb{C}) := \varinjlim_{U} \mathbf{S}^{N^-}_k(U, \mathbb{C}) }$ becomes an admissible representation of $\widehat{B}^\times$.

Let $B(\mathbb{A})^\times = ( B \otimes \mathbb{A} )^\times$ where $\mathbb{A}$ is the ring of adeles over $\mathbb{Q}$. 
For $\mathbf{f}_k \in \mathbf{S}^{N^-}_k(\mathbb{C})$ and $P(X,Y) \in L_k(2-k)(\mathbb{C})$, we define a function $\Psi(\mathbf{f}_k \otimes P(X,Y)  ) : B^\times \backslash B(\mathbb{A})^\times \to \mathbb{C}$
by
$$\Psi(\mathbf{f}_k \otimes P(X,Y)  )(g) := \left\langle \mathbf{f}_k(g_f), \rho^*_{k, \infty}(g_\infty) \circ P(X,Y) \right\rangle_k .$$
where $\rho^*_{k, \infty}  (g_\infty) = \mathrm{det}^{2-k} (g_\infty) \cdot \rho_{k, \infty}  (g_\infty)$. (cf.~\cite[(2.11)]{chida-hsieh-p-adic-L-functions}.)

We define $$\rho_{k, p} : B^\times_p \to \mathrm{Aut}_{\mathbb{C}_p} (L_k(\mathbb{C}_p))$$
by $\rho_{k, p}(g) := \rho_k \circ \iota_p \circ i_{K} (g)$ for $g \in B^\times_p$, and define
$\rho^*_{k, p}(g) := \mathrm{det}^{2-k} (g) \cdot  \rho_{k, p}(g)$.
By \cite[$\S$4.1]{chida-hsieh-p-adic-L-functions}, we have
$$\rho_{k, p} (g) = \rho_{k, \infty} (g)$$
 for $g \in B^\times$,
and
$$\rho_{k, p} (g) = \rho_k( \gamma_p \cdot i_p (g ) \cdot \gamma^{-1}_p)$$
for $g \in B^\times_p$ 
where 
$\gamma_p := \left( \begin{matrix}
\sqrt{\beta} & - \sqrt{\beta \theta} \\
-1 & \theta
\end{matrix} \right) \in \mathrm{GL}_2(K_p)$ and
$i_p$ is the fixed isomorphism $B_p \simeq \mathrm{M}_2(\mathbb{Q}_p)$.

Let $A$ be a subring of $\mathbb{C}$ and 
$\mathbf{S}^{N^-}_k(N^+p, A) \subseteq \mathbf{S}^{N^-}_k(N^+p, \mathbb{C})  $ and
$S^{N^-}_k(N^+p, A) \subseteq S^{N^-}_k(N^+p, \mathbb{C}_p)$ (via $\iota_p$ and $\iota_\infty$) 
be the submodules of $A$-valued forms.
If $\frac{1}{p} \in A$, then we have the isomorphism
\[
\xymatrix@R=0em{
\mathbf{S}^{N^-}_k(N^+p, A) \ar[r]^-{\simeq} & S^{N^-}_k(N^+p, A)  \\
\mathbf{f}_k \ar@{|->}[r] & \widehat{\mathbf{f}}_k(g) = \phi_{f_k}(g) := \rho_k(\gamma^{-1}_p) \cdot \rho_{k, p} (g^{-1}_p) \circ \mathbf{f}_k(g) 
}
\]
where $g \in \widehat{B}^\times$ and $g_p$ is the $p$-part of $g$.
Furthermore, with the Jacquet-Langlands correspondence (Theorem \ref{thm:jacquet-langlands}), we identify
\[
\xymatrix@R=0em{
\mathbf{S}^{N^-}_k(N^+p, A) \ar[r]^-{\simeq} & S^{N^-}_k(N^+p, A) & S_k(N^+p, A)^{N^-\textrm{-new}} \ar[l]_-{\simeq} \\
\mathbf{f}_k \ar@{|->}[r] & \widehat{\mathbf{f}}_k = \phi_{f_k}  & f_k \ar@{|->}[l] .
}
\]

\subsection{Complex theta elements of higher weight}
From now on, we assume $\mathbf{f}_k \in \mathbf{S}^{N^-}_k(N^+p, \mathbb{C})$ is a $p$-stabilized newform with $U_p$-eigenvalue $\alpha$ with non-critical slope. (cf. \cite[$\S$3.2]{chida-hsieh-p-adic-L-functions}.)
Let
$$v^*_i :=  (-1)^{\frac{k-2}{2}} \cdot (-1)^i \cdot \left( {\begin{matrix}
 k-2 \\ i
 \end{matrix}} \right) \cdot \sqrt{\beta}^{k-2-i} \cdot \sqrt{- D_K}^{k-2} \cdot X^{i} Y^{k-2-i} \in L_k(2-k)(\mathbb{C}) $$
for $i = 0, \cdots , k-2$. (cf. \cite[(3.1)]{chida-hsieh-p-adic-L-functions}.)

Let $$\mathbf{f}_{k,i} := \Psi(\mathbf{f}_k \otimes v^*_i  ).$$

Let $\widetilde{G}_n := K^\times \backslash \widehat{K}^\times / \mathcal{O}^\times_n \cdot \widehat{\mathbb{Q}}^\times$ be the Galois group of ring class field of $K$ of conductor $p^n$. It coincides with the same notation in $\S$\ref{subsec:classical_gross_pts}. Here, $\mathcal{O}_n = \mathbb{Z} + p^n \mathcal{O}_K$.
Let $[\cdot]_n: \widehat{K}^\times \to \widetilde{G}_n$ be the geometrically normalized reciprocity map.
\begin{defn}[Complex theta elements, {\cite[Definition 4.1]{chida-hsieh-p-adic-L-functions}}]
Fix a set $\Xi_n$ of representatives of $\widetilde{G}_n$ in $K^\times \backslash \widehat{K}^\times$. We define the \textbf{$n$-th complex theta element $\Theta^{[i]}_n (\mathbf{f}_k)$ of weight $i - \frac{k-2}{2}$} by
$$\Theta^{[i]}_n (\mathbf{f}_k) := \frac{1}{\alpha^n} \sum_{a \in \Xi_n} \mathbf{f}_{k,i}(x_n(a)) \cdot \iota^{-1}_p \left( \overline{a}^{i-(k-2)}_p \cdot a^{-i}_p \right) \cdot [a]_n  \in \mathbb{C}[\widetilde{G}_n]$$
\end{defn}
\begin{rem}
Note that the index and the weight of the complex theta element are different due to Remark \ref{rem:normalization_comparison_with_CH}. 
\end{rem}

\subsection{The interpolation formula for the complex theta elements} \label{subsec:interpolation_complex_theta}
\begin{defn} \label{def:characters}
Let $\chi$ be an anticyclotomic Hecke character of conductor $p^s$ and weight $(i - \frac{k-2}{2}, i + \frac{k-2}{2})$ where $i = 1, \cdots k-1$. We define the \textbf{(central critical twisted) $p$-adic avatar $\widehat{\chi}$ of $\chi$} by
$$\widehat{\chi}(a) = \chi(a) \cdot a^{i}_p \cdot \overline{a}^{k-2-i}_p$$
with respect to the weight $k$ of $f$. (cf.~\cite[Introduction]{chida-hsieh-p-adic-L-functions}).
\end{defn}
We state the interpolation formula for the complex theta element $\Theta^{[i]}_n (\mathbf{f}_k)$. Note that the formula is slightly different from the original one, but this is only because of the difference of the normalization.
\begin{thm}[{\cite[Proposition 4.3]{chida-hsieh-p-adic-L-functions}}] \label{thm:interpolation_complex_theta}
Suppose that $\chi$ has the conductor $p^s$. For every $n \geq \mathrm{max}\lbrace s, 1 \rbrace$, we have the interpolation formula
$$\widehat{\chi} \left( \Theta^{[i]}_n (\mathbf{f}_k)^2 \right) = (k-2)! \cdot \frac{L(f_k, \chi, \frac{k}{2})}{\Omega_{f_k,N^-}} \cdot e_p(f_k,\chi)^j \cdot \frac{1}{\alpha^{2s}} \cdot (p^s D_K)^{k-1} \cdot \frac{u^2_K}{\sqrt{D_K}} \cdot \chi(\mathfrak{N}^+) \cdot \epsilon_p(f_k) $$
where
\begin{itemize}
\item $\Omega_{f_k,N^-}$ is the Gross period defined in \cite[(4.3)]{chida-hsieh-p-adic-L-functions} (cf.\cite[Appendix]{kim-summary}),
\item $e_p(f_k, \chi)$ is the $p$-adic multiplier defined by
\begin{displaymath}
e_p(f_k,\chi) = \left\lbrace \begin{array}{ll}
1 & \textrm{if} \ n > 0, \\
\left(1 - p^{\frac{k-2}{2}} \alpha_p(f_k)^{-1}  \right)^2 & \textrm{if} \ n = 0 \ \textrm{and} \ p \ \textrm{splits in $K$}, \\
1-p^{k-2} \alpha_p(f_k)^{-2}  & \textrm{if} \ n = 0 \ \textrm{and} \ p \ \textrm{is inert in $K$,}
\end{array}
\right.
\end{displaymath}
and  
\begin{displaymath}
j = \left\lbrace \begin{array}{ll}
1 & \textrm{ if } f_k \textrm{ is new at } p, \\
2 & \textrm{ if } f_k \textrm{ is old at } p,
\end{array}
\right.
\end{displaymath}
\item $u^2_K = \# \left( \mathcal{O}^\times_K / 2 \right)$,
\item $\mathfrak{N}^+$ is the ideal of $\mathcal{O}_K$ satisfying $N^+ = \mathfrak{N}^+ \cdot \overline{\mathfrak{N}^+}$ in $K$ depending on the orientation of the optimal embedding, and
\item $\epsilon_p(f_k)$ is the eigenvalue of the Atkin-Lehner involution of $f_k$ at $p$.
\end{itemize}
\end{thm}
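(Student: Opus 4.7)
The strategy is to reinterpret $\widehat{\chi}(\Theta^{[i]}_n(\mathbf{f}_k))$ as a finite toric period of the automorphic form $\mathbf{f}_k$ against $\chi$, and then invoke an explicit Waldspurger-type formula to relate its square to the central $L$-value $L(f_k,\chi,k/2)$. Since $\widehat{\chi}$ is a group-like character, evaluation is a ring homomorphism $\mathbb{C}[\widetilde{G}_n]\to\mathbb{C}$, so $\widehat{\chi}(\Theta^{[i]}_n(\mathbf{f}_k)^2)=\widehat{\chi}(\Theta^{[i]}_n(\mathbf{f}_k))^2$; it therefore suffices to compute the square of a single character value.

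First I would unpack the definition. Using $\widehat{\chi}(a)=\chi(a)\cdot a_p^{i}\overline{a}_p^{k-2-i}$, the twist factors $\iota_p^{-1}(\overline{a}_p^{\,i-(k-2)}a_p^{-i})$ appearing in $\Theta^{[i]}_n$ cancel the $p$-adic part of $\widehat{\chi}([a]_n)$ exactly, giving
\[
\widehat{\chi}(\Theta^{[i]}_n(\mathbf{f}_k)) \;=\; \alpha^{-n}\sum_{a\in\Xi_n}\Psi(\mathbf{f}_k\otimes v^*_i)(x_n(a))\,\chi(a).
\]
The right-hand side is a Riemann sum for the adelic toric integral
$P_\chi(\mathbf{f}_k):=\int_{K^\times\mathbb{A}^\times\backslash\mathbb{A}_K^\times}\Psi(\mathbf{f}_k\otimes v^*_i)(\varsigma^{(n)} t)\chi(t)\,dt$ against a test vector supported on the level-$n$ Gross point; the factor $\alpha^{-n}$ compensates for the choice of $\varsigma^{(n)}_p$, and the measure normalization contributes the $u^2_K/\sqrt{D_K}$.

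Next one applies the explicit Waldspurger formula in the refined form of File--Martin--Pitale (which follows Ichino--Ikeda, and is carried out directly in \cite{chida-hsieh-p-adic-L-functions}): $|P_\chi(\mathbf{f}_k)|^2/\langle\mathbf{f}_k,\mathbf{f}_k\rangle$ equals the central value $L(f_k,\chi,k/2)$ divided by relevant $L$-factors and multiplied by a product of normalized local matrix-coefficient integrals $\prod_v \alpha_v(\chi_v,\mathbf{f}_{k,v})$. One then computes each local factor. At $\infty$, pairing with $v^*_i$ together with the weight-$k$ discrete series produces $(k-2)!$. At $q\mid N^+$ the Gross point $\varsigma_q$ is adapted to the ordered factorization $\mathfrak{N}^+\overline{\mathfrak{N}^+}$, so the local integral returns $\chi(\mathfrak{N}^+)$. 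At $q\mid N^-$ the ramified nature of $B_q$ together with the Gross period absorbs the local factor, and produces $\epsilon_p(f_k)$ through the Atkin--Lehner sign globally. At $q\mid D_K$, the Tamagawa measure contributes $D_K^{k-1}$. The Gross period $\Omega_{f_k,N^-}$ is defined precisely so that it absorbs $\langle\mathbf{f}_k,\mathbf{f}_k\rangle$ and the residue normalizations at $N^-\infty$, cleaning up the final formula.

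The main obstacle and core calculation is the local integral at $p$. For $n\geq s\geq 1$, I would use the $U_p$-eigenform relation $U_p\mathbf{f}_k=\alpha\cdot\mathbf{f}_k$ to slide the integral at the deep level $n$ down to level $s$, producing $\alpha^{s-n}$ which cancels part of the $\alpha^{-n}$ in the definition of $\Theta^{[i]}_n$; after squaring only $\alpha^{-2s}$ survives, while the explicit form of $\varsigma^{(s)}_p$ yields $p^{s(k-1)}$ through the Tamagawa measure of the local test vector. For $s=0$ no such shift is available; the local matrix-coefficient integral becomes a zeta-like integral on the maximal compact torus, whose evaluation in the split versus inert cases produces the Euler-type factors $(1-p^{(k-2)/2}\alpha^{-1})^2$ or $1-p^{k-2}\alpha^{-2}$, and the exponent $j$ simply records whether one or two local Euler factors at $p$ appear depending on whether $f_k$ is new or old at $p$. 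Substituting these local results back into the Waldspurger identity yields the stated interpolation formula.
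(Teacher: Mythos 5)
You should first be aware that the paper does not actually prove this statement: it is imported wholesale from Chida--Hsieh (their Proposition~4.3), and the only ``proof content'' in the present article is the remark that the formula here differs from the original by the renormalization of the pairing $\langle-,-\rangle_k$ and of the vectors $v_i^*$, and by a shift of the index $i$ (Remark~\ref{rem:normalization_comparison_with_CH}). So the honest comparison is between your sketch and the proof in the cited reference, not anything in this paper.

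With that caveat, your outline is a correct summary of the strategy that the cited proof follows. The reduction $\widehat{\chi}(\Theta^2)=\widehat{\chi}(\Theta)^2$ and the cancellation of the twist factors $\overline{a}_p^{\,i-(k-2)}a_p^{-i}$ against the $p$-adic part of $\widehat{\chi}$ are exactly right, as is the identification of the resulting sum over $\Xi_n$ with a toric period of $\Psi(\mathbf{f}_k\otimes v_i^*)$ along $K^\times\mathbb{A}^\times\backslash\mathbb{A}_K^\times$ evaluated at the translates $x_n(a)$ of the explicit Gross point; the $U_p$-eigenform relation is indeed what makes $\widehat{\chi}(\Theta_n^{[i]})$ stable in $n\geq\max\{s,1\}$ and isolates $\alpha^{-2s}$ after squaring, and the split/inert Euler factors at $s=0$ come from the local toric integral at $p$ exactly as you say. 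Two points of bookkeeping are loose, though neither is fatal to the strategy: the Atkin--Lehner sign $\epsilon_p(f_k)$ arises from the local computation \emph{at $p$} (the Weyl-element component of $\varsigma_p^{(n)}$), not from the places dividing $N^-$, which are instead absorbed into the definition of the Gross period $\Omega_{f_k,N^-}$ together with $\langle\mathbf{f}_k,\mathbf{f}_k\rangle$; and the factors $(p^sD_K)^{k-1}$ and $(k-2)!$ come out of the archimedean pairing normalization and the local integrals at $p$ and at $q\mid D_K$ jointly, so attributing $D_K^{k-1}$ to a ``Tamagawa measure at $q\mid D_K$'' alone is not quite how the constants are tracked in the reference. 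If you intend this as a self-contained proof rather than a roadmap, the entire content lies in those local matrix-coefficient integrals, which your sketch asserts rather than performs; for the purposes of this paper it would suffice (and would be more faithful to what the paper actually does) to cite the result and verify only the dictionary between the two normalizations of $\langle-,-\rangle_k$, $v_i^*$, and the index $i$.
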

In \cite[Theorem A]{chida-hsieh-p-adic-L-functions}, there are certain restrictions on weight ($k-2 < p$) and slope (slope zero).
However, Theorem \ref{thm:interpolation_complex_theta} does not have such a restriction.

\subsection{An integral comparison of complex and $p$-adic quaternionic forms}
The following proposition plays the key role in the connection between complex and $p$-adic theta elements.

\begin{prop}[{\cite[Lemma 4.4]{chida-hsieh-p-adic-L-functions}}] \label{prop:congruences_at_gross_points}
Let $A \subset \mathbb{C}_p$ be a subring which contains $\mathcal{O}_{E}$ and $\mathcal{O}_{K_p}$.
For $a \in \widehat{K}^\times$, we have
\begin{enumerate}
\item $$p^{n(k-2)} \cdot \mathbf{f}_{k,i} (x_n(a)) \cdot \overline{a}^{i-(k-2)}_p \cdot a^{-i}_p  \in A$$
\item $$  p^{n( k-2)} \cdot \mathbf{f}_{k,i} (x_n(a)) \cdot \overline{a}^{i-(k-2)}_p \cdot a^{-i}_p \equiv  (-1)^{\frac{k-2}{2}} \cdot (-1)^i \cdot \left( {\begin{matrix}
 k-2 \\ i
 \end{matrix}} \right) \cdot \sqrt{\beta}^{2-k} \cdot \left\langle \phi_{f_k}(x_n(a)), X^{k-2} \right\rangle_k  \pmod{p^n A}$$
\end{enumerate}
where $a_p \in (K \otimes \mathbb{Q}_p)^\times$ is the $p$-part of $a \in \widehat{K}^\times$ and $\overline{a}_p$ is the conjugate of $a_p$.
\end{prop}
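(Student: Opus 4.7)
The plan is to adapt the argument of \cite[Lemma 4.4]{chida-hsieh-p-adic-L-functions}: first translate the left-hand side into an intrinsic pairing with $\phi_{f_k}(x_n(a))$ using the reciprocity formula between $\mathbf{f}_k$ and $\phi_{f_k}$, then perform an explicit matrix computation to isolate the dominant contribution modulo $p^n$.

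First I would express the left-hand side as a pairing of $\phi_{f_k}(x_n(a))$ with $v_i^*$. Since $x_n(a) \in \widehat{B}^\times$ has trivial archimedean component, $\mathbf{f}_{k,i}(x_n(a)) = \langle \mathbf{f}_k(x_n(a)), v_i^* \rangle_k$. Substituting $\mathbf{f}_k(x_n(a)) = \rho_{k,p}(a_p) \circ \rho_{k,p}(\varsigma_p^{(n)}) \circ \rho_k(\gamma_p) \circ \phi_{f_k}(x_n(a))$, applying the equivariance $\langle \rho_{k,p}(a_p) P_1, P_2 \rangle_k = \langle P_1, \rho^{*}_{k,p}(a_p^{-1}) P_2 \rangle_k$, and using the eigenvalue relation $\rho^{*}_{k,p}(a_p^{-1}) \circ v_i^* = \bar{a}_p^{(k-2)-i} a_p^i \cdot v_i^*$ (from the $(K \otimes \mathbb{Q}_p)^\times$-weight decomposition of $L_k$ recalled in $\S$\ref{subsec:complex_p-adic_quaternionic_forms}), we obtain
$$\mathbf{f}_{k,i}(x_n(a)) \cdot \bar{a}_p^{i-(k-2)} \cdot a_p^{-i} = \bigl\langle \rho_k(\gamma_p \cdot i_p(\varsigma_p^{(n)})) \circ \phi_{f_k}(x_n(a)), \, v_i^* \bigr\rangle_k,$$
upon using the conjugation identity $\rho_{k,p}(\varsigma_p^{(n)}) \circ \rho_k(\gamma_p) = \rho_k(\gamma_p \cdot i_p(\varsigma_p^{(n)}))$.

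Next I would explicitly compute $M_n := \gamma_p \cdot i_p(\varsigma_p^{(n)})$ in both the split and inert cases using the formulas from $\S$\ref{subsubsec:p-part_of_explicit_Gross_points}. A direct multiplication yields a factorization $M_n = N \cdot \mathrm{diag}(p^n, 1)$ where $N$ has entries in $\mathcal{O}_{K_p}$ depending only on $\vartheta$, $\theta$, and $\sqrt{\beta}$, independent of $n$. Since $(\rho_k(\mathrm{diag}(p^n, 1)) \cdot P)(X, Y) = P(X, p^n Y)$ sends $X^j Y^{k-2-j}$ to $p^{n(k-2-j)} X^j Y^{k-2-j}$, writing $\phi_{f_k}(x_n(a)) = \sum_{j=0}^{k-2} c_j X^j Y^{k-2-j}$ with $c_j \in \mathcal{O}$, only a single extremal monomial survives modulo $p^n$, reducing the pairing modulo $p^n$ to a single coefficient of $\phi_{f_k}(x_n(a))$ that the pairing convention identifies with $\langle \phi_{f_k}(x_n(a)), X^{k-2} \rangle_k$.

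The final step is to pair $\rho_k(N) \cdot X^{k-2}$ against $v_i^*$. Using $(\rho_k(r) P)(X,Y) = P(dX - bY, -cX + aY)$ and the explicit form of $N$, the result has the form $(\alpha X + \beta Y)^{k-2}$ for explicit $\alpha, \beta \in \mathcal{O}_{K_p}$; expanding via the binomial theorem and pairing against
$$v_i^* = (-1)^{(k-2)/2} (-1)^i \binom{k-2}{i} \sqrt{\beta}^{k-2-i} \sqrt{-D_K}^{k-2} X^i Y^{k-2-i}$$
using $\langle X^\alpha Y^{k-2-\alpha}, X^{k-2-\beta} Y^\beta \rangle_k = (-1)^\alpha \binom{k-2}{\alpha}^{-1} \delta_{\alpha,\beta}$ forces $\alpha = k-2-i$ and yields exactly the constant $(-1)^{(k-2)/2} (-1)^i \binom{k-2}{i} \sqrt{\beta}^{2-k}$ claimed in (2). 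The integrality (1) then follows by tracking $p$-valuations term by term: the prefactor $p^{n(k-2)}$ absorbs both the factorial denominators $\binom{k-2}{\alpha}^{-1}$ from the pairing and the residual $p^{n(k-2-j)}$ factors for each $j$, so every summand lies in $A$; the congruence (2) is then immediate from the mod-$p^n$ analysis. The main obstacle is the careful bookkeeping of constants and signs, particularly verifying that the split and inert cases produce the same universal constant, which requires invoking the compatibility conditions $\beta \in (\mathbb{Z}_p^\times)^2$ and $\beta \in \mathbb{Z}_q^\times$ for $q \mid D_K$ from $\S$\ref{subsec:explicit_setup} together with a consistent choice of $\sqrt{\beta\theta}$.
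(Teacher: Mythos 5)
Your overall architecture is the right one, and it is in fact the argument behind the paper's one-line proof (the paper simply cites Chida--Hsieh's Lemma 4.4 plus a normalization remark, so you have supplied strictly more detail than the text does): unfold $\mathbf{f}_{k,i}$ through $\mathbf{f}_k(g)=\rho_k(\gamma_p\, i_p(g_p))\circ\phi_{f_k}(g)$, strip off $a_p$ using equivariance of $\langle-,-\rangle_k$ and the fact that $v_i^*$ is a $(K\otimes\mathbb{Q}_p)^\times$-eigenvector, factor $\gamma_p\, i_p(\varsigma_p^{(n)})$ as an $n$-independent integral matrix $N$ times $\mathrm{diag}(p^n,1)$, and let the diagonal factor concentrate everything on one monomial modulo $p^n$.

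There are, however, two convention-sensitive steps that you assert rather than verify, and under the conventions actually in force in this paper they do not come out as you state them. First, with the adjugate action $(\rho_k(r)P)(X,Y)=P(dX-bY,-cX+aY)$ one gets $\rho_k(\mathrm{diag}(p^n,1))\colon X^jY^{k-2-j}\mapsto p^{n(k-2-j)}X^jY^{k-2-j}$, so the monomial surviving modulo $p^n$ is $X^{k-2}$, i.e.\ the coefficient $c_{k-2}$ of $X^{k-2}$ in $\phi_{f_k}(x_n(a))$. But the pairing $(X^iY^{k-2-i},X^{k-2-j}Y^j)\mapsto(-1)^i\binom{k-2}{i}^{-1}\delta_{i,j}$ is anti-diagonal, so $\langle\phi_{f_k}(x_n(a)),X^{k-2}\rangle_k$ extracts the coefficient of $Y^{k-2}$, not of $X^{k-2}$. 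Your claim that ``the pairing convention identifies'' the surviving coefficient with $\langle\phi_{f_k}(x_n(a)),X^{k-2}\rangle_k$ is therefore false as written; the surviving coefficient is $\langle\phi_{f_k}(x_n(a)),Y^{k-2}\rangle_k$, and you must either fix this or conclude that the statement itself needs $Y^{k-2}$ in place of $X^{k-2}$ under the paper's conventions. Second, your account of where the prefactor $p^{n(k-2)}$ is needed is backwards: under this non-unitary action $\rho_k(\mathrm{diag}(p^n,1))$ puts the factors $p^{n(k-2-j)}$ in the \emph{numerator} and introduces no denominators, and the binomial denominators $\binom{k-2}{\alpha}^{-1}$ of the pairing are exactly cancelled by the $\binom{k-2}{i}$ built into $v_i^*$. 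In Chida--Hsieh the prefactor clears the $\det(\varsigma_p^{(n)})^{(2-k)/2}=p^{-n(k-2)/2}$ coming from their unitary normalization; after this paper's renormalization you must recompute which power of $p$ is actually required and check that the congruence in (2) remains nontrivial (as you have set it up, the left-hand side would be divisible by $p^{n(k-2)}$ and hence vanish modulo $p^n$ for $k>3$). Redoing this bookkeeping is not optional here: it is precisely the ``change of normalization'' that the paper's proof defers to, and it is where the content of the proposition lives.
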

\begin{proof}
This is \cite[Lemma 4.4]{chida-hsieh-p-adic-L-functions} with change of normalization of the pairing ($\S$\ref{subsubsec:pairing}) and \cite[Remark 2.5]{castella-longo}.
\end{proof}
The following corollary is an immediate consequence.
\begin{cor} \label{cor:higher_weight_theta_congruences}
$$p^{n( k-2)} \cdot \Theta^{[i]}_{n} (\mathbf{f}_k) \equiv (-1)^{\frac{k-2}{2}} \cdot (-1)^i \cdot \left( {\begin{matrix}
 k-2 \\ i
 \end{matrix}} \right) \cdot \sqrt{\beta}^{2-k} \cdot \theta_n( f_k) \pmod{\frac{p^n}{\alpha^n_p} \mathcal{O}_E} .$$
\end{cor}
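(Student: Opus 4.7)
The plan is to apply Proposition \ref{prop:congruences_at_gross_points} termwise to the defining sum of the complex theta element. Multiplying the definition of $\Theta^{[i]}_n(\mathbf{f}_k)$ through by $p^{n(k-2)}$ rewrites it as
$$p^{n(k-2)} \Theta^{[i]}_n(\mathbf{f}_k) = \frac{1}{\alpha^n} \sum_{a \in \Xi_n} \bigl( p^{n(k-2)} \cdot \mathbf{f}_{k,i}(x_n(a)) \cdot \iota^{-1}_p(\overline{a}^{i-(k-2)}_p a^{-i}_p) \bigr) \cdot [a]_n,$$
and part (1) of Proposition \ref{prop:congruences_at_gross_points} ensures that each parenthesised quantity lies in $A$, so the whole expression makes sense in $\tfrac{1}{\alpha^n} A[\widetilde{G}_n]$.

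Next, I would apply part (2) of Proposition \ref{prop:congruences_at_gross_points} to each summand. Up to error in $p^n A$, the parenthesised term is replaced by $(-1)^{(k-2)/2}(-1)^i \binom{k-2}{i}\sqrt{\beta}^{2-k} \cdot \langle \phi_{f_k}(x_n(a)), X^{k-2}\rangle_k$, which by the definition of the zeroth component (using the pairing of $\S$\ref{subsubsec:pairing}) equals $(-1)^{(k-2)/2}(-1)^i \binom{k-2}{i}\sqrt{\beta}^{2-k} \cdot \phi^{[0]}_{f_k}(x_n(a))$. Pulling this common constant out of the sum yields
$$p^{n(k-2)} \Theta^{[i]}_n(\mathbf{f}_k) \equiv (-1)^{(k-2)/2}(-1)^i \binom{k-2}{i}\sqrt{\beta}^{2-k} \cdot \frac{1}{\alpha^n} \sum_{a \in \Xi_n} \phi^{[0]}_{f_k}(x_n(a)) \cdot [a]_n \pmod{\tfrac{p^n}{\alpha^n_p}A[\widetilde{G}_n]}.$$
The residual sum matches $\widetilde{\theta}_n(f_k)$ after the bijective reindexing $a \leftrightarrow \xi^{-1}$ on $\widetilde{G}_n$, and its image under the natural projection $E[\widetilde{G}_n] \twoheadrightarrow E[\Gamma_n]$ is $\theta_n(f_k)$.

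The main technical point I expect to handle carefully is the reindexing: $\Theta^{[i]}_n(\mathbf{f}_k)$ is written with $[a]_n$ whereas $\widetilde{\theta}_n(f_k)$ is written with $\xi^{-1}$, so the involution $\xi \mapsto \xi^{-1}$ on the group ring must be tracked compatibly with the projection to $\Gamma_n$. Beyond this, the argument is essentially arithmetic bookkeeping: each per-summand error lies in $\tfrac{p^n}{\alpha^n_p} A$, and after projecting to $\Gamma_n$ the accumulated error coefficients remain in $\tfrac{p^n}{\alpha^n_p}\mathcal{O}_E$, yielding the stated congruence. This is why the author flags the corollary as immediate — the real content is already packaged in Proposition \ref{prop:congruences_at_gross_points}.
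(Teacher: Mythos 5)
Your proposal is correct and is exactly the argument the paper has in mind: the paper simply declares the corollary ``an immediate consequence'' of Proposition \ref{prop:congruences_at_gross_points}, and your termwise application of parts (1) and (2), extraction of the common constant, identification of $\langle \phi_{f_k}(x_n(a)), X^{k-2}\rangle_k$ with $\phi^{[0]}_{f_k}(x_n(a))$, and tracking of the $\tfrac{p^n}{\alpha^n}$ error is precisely the bookkeeping being suppressed. Your flag about the $[a]_n$ versus $\xi^{-1}$ indexing is a fair observation about the paper's own conventions (the theta elements are only well-defined up to translation by $\widetilde{G}_n$, as the paper's remark acknowledges), not a defect of your argument.
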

Combining Theorem \ref{thm:interpolation_complex_theta} and Corollary \ref{cor:higher_weight_theta_congruences}, we have
the following interpolation formula.
\begin{cor}[Weak interpolation formula] \label{cor:interpolation_p-adic_theta}
Let $\widehat{\chi}$ be a character as in Definition \ref{def:characters}.
\begin{align*}
& (-1)^{\frac{k-2}{2}} \cdot (-1)^i \cdot \left( {\begin{matrix}
 k-2 \\ i
 \end{matrix}} \right) \cdot \sqrt{\beta}^{2-k} \cdot \widehat{\chi} \left( \theta_n( f_k) \right) \\
  \equiv &
\sqrt{ p^{n( k-2)} \cdot (k-2)! \cdot \frac{L( f_k , \chi, \frac{k}{2})}{\Omega_{f,N^-}} \cdot e_p( f_k ,\chi)^j \cdot \frac{1}{\alpha^{2s}} \cdot (p^s D_K)^{k-1} \cdot \frac{u^2_K}{\sqrt{D_K}} \cdot \chi(\mathfrak{N}^+) \cdot \epsilon_p( f_k ) }
\pmod{\frac{p^n}{\alpha^n_p} \mathcal{O}_E} .
\end{align*}
If $f_k$ is ordinary or $k=2$, then the congruence becomes equality.
\end{cor}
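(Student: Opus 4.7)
The plan is to combine two already-established inputs: the complex interpolation formula of Theorem~\ref{thm:interpolation_complex_theta} and the integral $p$-adic/complex comparison of Corollary~\ref{cor:higher_weight_theta_congruences}. Once those ingredients are in hand, the argument is essentially formal: evaluate the group-algebra congruence of Corollary~\ref{cor:higher_weight_theta_congruences} at $\widehat{\chi}$, then use that $\widehat{\chi}(\Theta^{[i]}_n(\mathbf{f}_k))$ is a square root of the interpolation value coming from Theorem~\ref{thm:interpolation_complex_theta}.

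First I would apply the character $\widehat{\chi}$ to both sides of Corollary~\ref{cor:higher_weight_theta_congruences}. The character extends $\mathcal{O}_E$-linearly to a ring homomorphism $\mathcal{O}_E[\widetilde{G}_n]\to \mathcal{O}_{\mathbb{C}_p}$, and since $\chi$ is anticyclotomic it factors through $\mathcal{O}_E[\Gamma_n]$; in particular it carries the ideal $\tfrac{p^n}{\alpha^n_p}\mathcal{O}_E[\Gamma_n]$ into $\tfrac{p^n}{\alpha^n_p}\mathcal{O}_E$ (after enlarging $E$ if necessary). Writing $C_i := (-1)^{(k-2)/2}(-1)^i\binom{k-2}{i}\sqrt{\beta}^{2-k}$, this produces
\begin{equation*}
p^{n(k-2)} \cdot \widehat{\chi}\bigl(\Theta^{[i]}_n(\mathbf{f}_k)\bigr) \equiv C_i \cdot \widehat{\chi}\bigl(\theta_n(f_k)\bigr) \pmod{\tfrac{p^n}{\alpha^n_p}\mathcal{O}_E}.
\end{equation*}

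Next, since $\widehat{\chi}$ is multiplicative we have $\widehat{\chi}(\Theta^{[i]}_n(\mathbf{f}_k))^2 = \widehat{\chi}(\Theta^{[i]}_n(\mathbf{f}_k)^2)$, which by Theorem~\ref{thm:interpolation_complex_theta} equals the explicit interpolation value $V$ appearing in the stated corollary. Extracting a square root (only well-defined up to sign) gives $\widehat{\chi}(\Theta^{[i]}_n(\mathbf{f}_k)) = \pm\sqrt{V}$, and substituting into the previous congruence yields $C_i \cdot \widehat{\chi}(\theta_n(f_k)) \equiv p^{n(k-2)} \sqrt{V}$ modulo $\tfrac{p^n}{\alpha^n_p}\mathcal{O}_E$, which is the weak interpolation formula (after absorbing the power of $p$ inside the radical). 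For the sharpening to an equality when $k=2$ or when $f_k$ is ordinary: in the former case the $L_k$-pairing is one-dimensional and no truncation is needed in Proposition~\ref{prop:congruences_at_gross_points}, while in the latter $\alpha_p \in \mathcal{O}_E^\times$ makes the integral comparison of Corollary~\ref{cor:higher_weight_theta_congruences} exact by standard Hida-theoretic arguments; in either case the input congruence is an equality, hence so is the output.

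The main non-routine point is the bookkeeping of signs, binomial coefficients, and the explicit form of $C_i$ consistent with the non-unitary normalization of the pairing in \S\ref{subsubsec:pairing} (which differs from \cite{chida-hsieh-p-adic-L-functions}; see Remark~\ref{rem:normalization_comparison_with_CH}). The sign ambiguity in $\pm\sqrt{V}$ is not a genuine obstacle because $\theta_n(f_k)$ is itself only defined up to multiplication by an element of $\widetilde{G}_n$, and this indeterminacy disappears after passage to the well-defined product $L_{p,n}(K_\infty,f_k) = \theta_n(f_k)\cdot\theta^*_n(f_k)$; the square root is the manifestation at finite level of this fact.
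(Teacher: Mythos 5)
Your argument is exactly the paper's: the paper derives Corollary~\ref{cor:interpolation_p-adic_theta} in one line by combining Theorem~\ref{thm:interpolation_complex_theta} with Corollary~\ref{cor:higher_weight_theta_congruences}, which is precisely the evaluation-at-$\widehat{\chi}$ plus square-root substitution you carry out (your discussion of the sign ambiguity and of the equality cases is a welcome elaboration of what the paper leaves implicit). The only point worth a second look is the power of $p$ under the radical: moving $p^{n(k-2)}$ inside the square root yields $p^{2n(k-2)}$ there, so your bookkeeping and the displayed exponent should be reconciled, though this is an issue with the statement's normalization rather than with your method.
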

\begin{rem}
If the form is ordinary, then the congruence in the weak interpolation formula becomes equality by taking the limit $n \to \infty$. Then the range of interpolating characters becomes larger, namely the set of locally algebraic $p$-adic characters of weight $(i, -i)$ with $-k/2 < i < k/2$ as in \cite[Theorem 4.6]{chida-hsieh-p-adic-L-functions}.
\end{rem}

\section{Speculations and questions} \label{sec:speculations}
\subsection{``Deformation" of explicit Gross points}
This is inspired by \cite[(4.5)]{emerton-interpolation}.
We may interpret the explicit Gross point $\varsigma^{(1)}$ as a functional on $S^{N^-}_k(N^+p, E)^{(<k-1)}$ via the overconvergent construction:
\[
\xymatrix@R=0em{
(\varsigma^{(1)})_* : S^{N^-}_k(N^+p, E)^{(<k-1)} \ar[r] & \mathscr{D}\\
\phi_{f_k} \ar[r] & \Phi_{f_k} (\varsigma^{(1)}).
}
\]
This can be regarded as a functional on a small piece (i.e.~fixed weight) of the completed cohomology as in $\S$\ref{subsec:cohomological_interpretation} or the corresponding eigenvariety.
If we can ``patch" this functional on all the weight coherently, the map $(\varsigma^{(1)})_*$ would extend to the functional on the whole completed cohomology or the corresponding eigenvariety, which may produce two variable anticyclotomic $p$-adic $L$-functions on the eigenvariety. In the sequel paper in preparation, we construct two variable anticyclotomic $p$-adic $L$-functions of Hida families and study their Iwasawa theory.

Furthermore, if we work with geometry of the eigenvariety attached to a definite quaternion algebra (e.g.~\cite{buzzard-eigenvarieties}) or uses a relevant $p$-adic extension of Jacquet-Langlands correspondence (e.g.~\cite{harris-iovita-stevens}) in more detail, then we may be able to generalize the main result of this article to the critical slope case as in \cite{bellaiche-critical}. In fact, our setting prevents CM forms; thus, it would be easier than the case of modular symbols.


\subsection{Growth of the size of noncommutative class numbers}
It seems very interesting if we see a certain Iwasawa-theoretic phenomena in the growth behavior of the size of $B^\times \backslash \widehat{B}^\times / \widehat{R}^\times_{N^+p^r}$ as $r \to \infty$.
Then the study of this growth would be regarded as a ``noncommutative Iwasawa theory" in a completely different sense.
Maybe a $p$-adic variation of the Eichler trace formula \cite[Chapitre III.$\S$5.C]{vigneras} would yield an asymptotic formula like Iwasawa's formula on the $p$-class numbers of the $p$-cyclotomic fields.

\subsection{Geometric aspect of quaternionic Hida theory}
In \cite[$\S$2 and $\S$6]{longo-vigni-manuscripta}, Longo and Vigni investigate the geometric aspect of quaternionic Hida theory. If we can capture
the $\Gamma_1(p^r)$ and $\Gamma_1(p^\infty)$-level structures in the context of the Bruhat-Tits tree or its suitable coverings, then it may allow us to consider a direct connection of the Bruhat-Tits tree and quaternionic Hida theory.
Also, it would naturally explain the relation between geometric Gross points and big Gross points \`{a} la Longo-Vigni \cite[$\S$7]{longo-vigni-manuscripta} in the ordinary case.

\subsection{Explicit computation}
One may implement an explicit overconvergent algorithm, which is expected to be as effective as one in \cite{guitart-masdeu}, to improve the computation of theta elements given in \cite[$\S$5.1]{bertolini-darmon-mumford-tate-1996}. The explicit computation of the quotient graph \cite{computing-graphs} seems helpful to do this at least for the case of weight two forms. In \cite{graf_quaternionic_L-invariant}, Peter Mathias Gr{\"{a}}f explicitly computed Teitelbaum $\mathcal{L}$-invariants of $p$-newforms. If one implement the computation of theta elements for not only $p$-newforms but also $p$-stabilized newforms, then it would have may arithmetic applications. For the cyclotomic case, see \cite{explicit-hida}.

\section*{Acknowledgement}
The author thanks Karl Rubin for a year-long discussion on this project. This project has started when the author was a visiting assistant professor at UC Irvine.
The author thanks Robert Pollack and Glenn Stevens for the inspiration of this work; Lawrence Yong-uck Chung for helping me to understand Proposition \ref{prop:orbit}; Ming-Lun Hsieh for pointing out various technical issues; Chol Park and Sug Woo Shin for helpful discussion. Some part of this work is inspired by the talk by Carlos de Vera Piquero at CRM, March 2015.
\bibliographystyle{amsalpha}
\bibliography{library}
\end{document}